\title[Epsilon-strong systems]{Simplicity of algebras via epsilon-strong systems}
\newtheorem{thm}{Theorem}
\newtheorem{prop}[thm]{Proposition}
\newtheorem{lem}[thm]{Lemma}
\newtheorem{cor}[thm]{Corollary}
\newtheorem{que}[thm]{Question}
\theoremstyle{definition}
\newtheorem{defi}[thm]{Definition}
\newtheorem{exa}[thm]{Example}
\newtheorem{rem}[thm]{Remark}
\begin{document}

\author{Patrik Nystedt}
\address{University West,
Department of Engineering Science, 
SE-46186 Trollh\"{a}ttan, Sweden}

\email{patrik.nystedt@hv.se}

\subjclass[2010]{16S99, 16W22, 16W55, 22A22, 37B05}

\keywords{graded ring, skew inverse semigroup ring, Steinberg algebra, partial action}

\begin{abstract}
We obtain sufficient criteria for simplicity of 
systems, that is, rings $R$ that are equipped
with a family of additive subgroups $R_s$, for $s \in S$,
where $S$ is a semigroup, satisfying $R = \sum_{s \in S} R_s$ and
$R_s R_t \subseteq R_{st}$, for $s,t \in S$.
These criteria are specialized to obtain sufficient criteria for
simplicity of, what we call, s-unital epsilon-strong systems, that is
systems where $S$ is an inverse semigroup, $R$ is coherent,
in the sense that for all $s,t \in S$ with $s \leq t$,
the inclusion $R_s \subseteq R_t$ holds, and for each $s \in S$,
the $R_s R_{s^*}$-$R_{s^*}R_s$-bimodule $R_s$ is s-unital.
As an aplication of this, we obtain generalizations of recent 
criteria for simplicity of skew inverse semigroup rings, 
by Beuter, Goncalves, \"{O}inert and Royer, and then, in turn, 
for Steinberg algebras, over non-commutative rings, 
by Brown, Farthing, Sims, Steinberg, Clark and Edie-Michel. 
\end{abstract}

\maketitle

\section{Introduction}\label{introduction}

Steinberg algebras were independently introduced in \cite{clark2014} and 
\cite{steinberg2010} and they are closely related to the constructions in \cite{exel2008}.
Steinberg algebras are algebrai\-sations of Renault's $C^*$-algebras of groupoids
and can be viewed as a model for discrete inverse semigroup algebras. 
Lately, Steinberg algebras have attracted a lot of attention due to the fact that
they include many well-known constructions, such as, the Kumjian-Pask algebras 
of \cite{arandopino2013} and \cite{clarkflynn2014} which, in turn, include
the Leavitt path algebras of \cite{tomforde2011}.
Many different properties of Steinberg algebras have been studied,
such as when they are primitive, semiprimitive, artinian or noetherian
(see e.g. \cite{steinberg2016} and \cite{steinberg2018}).
In this article, we focus on the property of simplicity of Steinberg algebras. 
Necessary and sufficient conditions for this was first obtained for complex algebras
in \cite{brown2014}. This result was generalized to algebras over general
fields in \cite{steinberg2016} and in \cite{clark2015} to algebras over 
commutative unital rings:

\begin{thm}[Clark and Edie-Michel \cite{clark2015}]\label{theoremsteinberg}
If $G$ is a Hausdorff and ample groupoid, and $K$ is a commutative unital ring,
then the Steinberg algebra $A_K(G)$ is simple if and only if
$G$ is effective and minimal, and $K$ is a field.
\end{thm}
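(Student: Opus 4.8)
The plan is to recover this result as an application of our simplicity criterion for s-unital epsilon-strong systems, so the first step is to exhibit $A_K(G)$ as such a system. Let $S$ denote the inverse semigroup of compact open bisections of $G$, with product $UV = \{\gamma\delta : \gamma \in U,\ \delta \in V,\ \mathbf{d}(\gamma) = \mathbf{r}(\delta)\}$ and involution $U^* = \{\gamma^{-1} : \gamma \in U\}$; its set of idempotents $E(S)$ is the collection of compact open subsets of the unit space $G^{(0)}$. For $U \in S$ set $R_U = \{f \in A_K(G) : \mathrm{supp}(f) \subseteq U\}$. Because $G$ is Hausdorff, $A_K(G)$ is spanned by the characteristic functions $1_U$ with $U$ compact open, so $R = \sum_{U \in S} R_U$, and the convolution identity $1_U * 1_V = 1_{UV}$ gives $R_U R_V \subseteq R_{UV}$. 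Coherence is immediate, since $U \leq V$ in $S$ means $U \subseteq V$, whence $R_U \subseteq R_V$. Finally, for each $U$ the function $1_{\mathbf{r}(U)}$ lies in $R_U R_{U^*}$ and acts as a left identity on $R_U$, with the symmetric statement on the source side, so each bimodule $R_U$ is s-unital; thus $A_K(G)$ is an s-unital epsilon-strong system.

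Next I would translate the hypotheses into the language of the system. The graded ideals of $A_K(G)$ are exactly the spaces of functions supported on the restrictions $G|_{W}$ for open invariant $W \subseteq G^{(0)}$, so the system is graded-simple precisely when $G^{(0)}$ has no nonempty proper open invariant subset, that is, when $G$ is minimal. The remaining hypotheses should govern the passage from graded-simplicity to full simplicity. I expect effectiveness of $G$ to be equivalent to the diagonal $D = A_K(G^{(0)}) = \sum_{e \in E(S)} R_e$ detecting ideals, in the sense that every nonzero ideal of $A_K(G)$ meets $D$ nontrivially; this is the algebraic analogue of Renault's uniqueness theorem and is exactly the condition under which our criterion upgrades graded-simplicity to simplicity.

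For the ``if'' direction, minimality gives graded-simplicity, effectiveness gives that every nonzero ideal $J$ satisfies $J \cap D \neq 0$, and $J \cap D$ is then a nonzero $S$-invariant ideal of $D$. When $K$ is a field, the $S$-invariant ideals of $D$ correspond to the open invariant subsets of $G^{(0)}$, so minimality forces $J \cap D = D$ and hence $J = A_K(G)$. For the ``only if'' direction I would argue by contraposition, producing a proper nonzero ideal whenever a condition fails: a nontrivial open invariant $W$ yields a nontrivial graded ideal; a proper nonzero ideal $I$ of $K$ yields the proper nonzero invariant ideal $A_I(G) = \{f \in A_K(G) : \mathrm{im}(f) \subseteq I\}$, invariant because the action only permutes supports while fixing values; and failure of effectiveness yields a nonzero function supported in the interior of $\mathrm{Iso}(G) \setminus G^{(0)}$ whose generated ideal avoids $D$, violating the detection property.

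The main obstacle will be pinning down the precise equivalence between effectiveness of $G$ and the diagonal-detection condition in our criterion, together with the construction of an ideal from a failure of effectiveness: one must show that a nonzero function supported in the interior of the isotropy bundle generates an ideal meeting $D$ only in $0$, which requires controlling how convolution spreads support against the \'etale structure. A secondary, more routine point is to verify that $K$ being a field is exactly what collapses the $S$-invariant ideals of $D$ onto the open invariant subsets of $G^{(0)}$, so that the field hypothesis together with minimality yields $J \cap D = D$. The Hausdorff assumption is used throughout to guarantee that $A_K(G)$ is spanned by characteristic functions of compact open bisections, which is what makes the epsilon-strong s-unital structure, and hence our criterion, available.
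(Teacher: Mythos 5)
Your overall strategy coincides with the paper's: exhibit $A_K(G)$ as a coherent s-unital epsilon-strong system over the inverse semigroup $G^a$ of compact open bisections and apply the simplicity criterion of Theorem \ref{maintheorem}. The one structural difference is that you grade $A_K(G)$ directly by $R_U = \{ f : \mathrm{supp}(f) \subseteq U \}$, whereas the paper passes through the Beuter--Goncalves isomorphism $A_K(G) \cong L_c(G_0) \rtimes_{\pi} G^a$ and works inside the skew inverse semigroup ring. Your shortcut is legitimate (the verifications of $R_U R_V \subseteq R_{UV}$, coherence, and the local units $1_{c(U)} = 1_U * 1_{U^*}$ all go through), and it buys a cleaner statement at the cost of having to redo, on the Steinberg algebra side, the ideal-theoretic translations that the paper carries out for $L_c(G_0) \rtimes_{\pi} G^a$.

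There are, however, two genuine gaps. First, your witness for failure of effectiveness is wrong: if $U$ is a compact open bisection in the interior of $\mathrm{Iso}(G)$ with $U \not\subseteq G_0$ and $k \neq 0$, the ideal generated by $k_U$ \emph{does} meet the diagonal, since $k_{U^*} * k_U = (k^2)_{U^*U} = (k^2)_{d(U)} \in D$ (nonzero in general). The correct element is the difference $k_U - k_{c(U)}$; the paper (Proposition \ref{faithfulsimpleprop}) shows the ideal it generates is annihilated by the coefficient-summing map $t$, because left or right convolution by any $l_V$ sends $k_U - k_{c(U)}$ to another difference of the same type --- precisely because $U$ acts trivially on its domain. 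Second, the implication ``effective $\Rightarrow$ every nonzero ideal meets $D$'' is the entire content of the hard direction, and you leave it as an expectation. The criterion's actual hypothesis is not detection but $C_R(Z(D)) \subseteq D$; effectiveness yields this via a commutator computation that separates $c(b)$ from $d(b)$ by a compact open $U \subseteq G_0$ (using Hausdorffness) and a central s-unit of $K$ (Proposition \ref{gen2}), and detection then follows from the $I$-minimality argument driven by the local units $\epsilon_U$ (Propositions \ref{minimalprop} and \ref{propintersection}). Finally, your opening claim that the system ideals of $A_K(G)$ are exactly the functions supported on restrictions $G|_W$ is false without assuming $K$ simple ($A_J(G)$ for a proper ideal $J$ of $K$ is a counterexample); the usable statement, which you do invoke correctly later, is that minimality together with $K$ being a field forces any nonzero $G^a$-invariant ideal of $D$ to be all of $D$.
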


In \cite{clark2018} Clark, Exel, Pardo, Sims and Starling have found
necessary and sufficient criteria for simplicity of the Steinberg 
algebra $A_K(G)$ when $K$ is a field and $G$ is a second countable, not necessarily
Hausdorff, groupoid. In this article, we will, however, only consider Hausdorff groupoids.

The class of partial skew group rings were introduced by Dokuchaev and Exel
in \cite{dokuchaev2005} as a generalization of classical skew group rings and
as an algebraic analogue of partial crossed product C*-algebras.
This class of rings have been studied a lot mainly since 
many other constructions, such as the Leavitt path algebras \cite{goncalves2014}
and ultragraph Leavitt path algebras \cite{goncalves2017}, 
can be realized as partial skew group rings.
The class of partial skew group rings have been generalized even 
further with the definition of skew inverse semigroup rings by 
Exel and Vieira in \cite{exel2010}.
From a result of Beuter and Goncalves in \cite{beuter2018} it follows 
that every Steinberg algebra can be seen as a skew inverse semigroup ring. 
This means that results concerning skew inverse semigroup 
rings can be translated to results concerning Steinberg algebras.
Such a translation was recently made by Beuter, Goncalves, \"{O}inert and Royer
in \cite{beuter2017} where they deduce Theorem \ref{theoremsteinberg} from
the following result.

\begin{thm}[Beuter, Goncalves, \"{O}inert and Royer \cite{beuter2017}]\label{thmbeuter}
If $\pi$ is a locally unital partial action of an inverse semigroup $S$ 
on an associative, commutative and locally unital ring $A$,
then the skew inverse semigroup ring $A \rtimes_{\pi} S$ is simple 
if and only if $A$ is $S$-simple and $A$ is a maximal commutative subring of $A \rtimes_{\pi} S$.
\end{thm}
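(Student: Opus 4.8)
The plan is to realise the skew inverse semigroup ring $A \rtimes_{\pi} S$ as an s-unital epsilon-strong system graded by $S$, with component $R_s = D_s \delta_s$ equal to the image of the domain ideal $D_s$ of $\pi_s$, and then to derive both implications from the general simplicity criteria for such systems together with a separate argument for the converse. The canonical copy of $A$ sits inside the ring as the diagonal subring $\mathcal{D} = \sum_{e \in E(S)} R_e$, which is commutative precisely because $A$ is, and the hypothesis ``$A$ is a maximal commutative subring'' is to be read as maximality of $\mathcal{D}$. First I would check the structural axioms: coherence, $R_s \subseteq R_t$ whenever $s \leq t$, follows from the inclusion $D_s \subseteq D_t$ built into the order on a partial action, while s-unitality of each $R_s$ as an $R_s R_{s^*}$-$R_{s^*} R_s$-bimodule is a restatement of local unitality of $\pi$. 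This places $A \rtimes_\pi S$ squarely inside the framework developed earlier.

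For the implication ``$\Leftarrow$'' I would first prove the \emph{ideal intersection property}: if $\mathcal{D}$ is maximal commutative then every nonzero two-sided ideal $J$ of $A \rtimes_\pi S$ meets $\mathcal{D}$ nontrivially. The mechanism is a support-shrinking argument. Choose $0 \neq y = \sum_s a_s \delta_s \in J$ with the minimal number of nonzero terms; after multiplying by a suitable local unit I may assume one term is diagonal, and for any remaining non-diagonal term maximal commutativity supplies an element $b$ of $A$ for which the commutator $[\,b, y\,]$ again lies in $J$, is nonzero, but has strictly smaller support --- contradicting minimality unless $y \in \mathcal{D}$ already. Granting this, $J \cap \mathcal{D}$ is a nonzero ideal of $\mathcal{D} \cong A$; I would verify it is $S$-invariant, so that $S$-simplicity forces $J \cap \mathcal{D} = A$, and then local units propagate this equality to $J = A \rtimes_\pi S$, yielding simplicity.

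For the implication ``$\Rightarrow$'' I would argue by contraposition in two independent steps. If $A$ fails to be $S$-simple, a nonzero proper $S$-invariant ideal $I$ generates an ideal of $A \rtimes_\pi S$ whose elements remain supported inside $I$; this ideal is proper and nonzero, contradicting simplicity. If instead $\mathcal{D}$ is not maximal commutative, I would exploit the other direction of the intersection-property equivalence: a homogeneous element commuting with all of $\mathcal{D}$ but lying outside it can be used to construct a nonzero ideal $J$ with $J \cap \mathcal{D} = \{0\}$, which is incompatible with the intersection property that any simple ring trivially enjoys.

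The principal obstacle is the two-way intersection-property lemma, and within it the support-shrinking step. In the inverse semigroup setting multiplication is not invertible, the idempotents $E(S)$ proliferate, and the order relation $s \leq t$ identifies generators, so keeping track of supports of products and commutators --- and guaranteeing that a local unit can be inserted without enlarging the support --- is delicate. It is exactly the s-unital epsilon-strong structure, which provides $R_s R_{s^*} R_s = R_s$ and approximate identities on each component, that legitimises these cancellations, and I expect the bulk of the technical effort to reside here.
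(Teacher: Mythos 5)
Your proposal follows essentially the same route as the paper: realise $A \rtimes_{\pi} S$ as a coherent s-unital epsilon-strong system with components $R_s = \overline{D_s \delta_s}$, establish the ideal intersection property for the diagonal subring via a minimal-support commutator argument (the paper's Propositions \ref{propintersection} and \ref{minimalprop}), translate system simplicity into $S$-simplicity of $A$ (Proposition \ref{propsystemsimple}), and handle the converse through the equivalence of maximal commutativity with the intersection property (Proposition \ref{propequivalence}). The only difference is that the paper proves the more general s-unital statement (Theorem \ref{newsecondmaintheorem}) and obtains this theorem as the locally unital special case, whereas you work directly with local units; the substance of the argument is the same.
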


The motivation for the present article is derived from the following remark
made by Beuter and Goncalves in \cite[Definition 2.9]{beuter2018}.
If $R = A \rtimes_{\pi} S$ is a partial skew inverse semigroup ring
and we for all $s \in S$ put $R_s = \overline{D_s \delta_s}$,
then $R$ is a {\it system} in the sense that
$R = \sum_{s \in S} R_s$ and for all $s,t \in S$ the inclusion $R_s R_t \subseteq R_{st}$ holds,
and $R$ is {\it coherent} in the sense that
for all $s,t \in S$ with $s \leq t$ the inclusion $R_s \subseteq R_t$ holds.
This observation prompts the author of the present article to ask the following: 

\begin{que}
Is there a generalization of Theorem \ref{thmbeuter}, valid for coherent systems? 
\end{que}

In the main result of this article (see Theorem \ref{maintheorem} below),
we partially answer this question.
Namely, we provide sufficient conditions for simplicity of coherent systems.
Before we state this result, let us briefly describe the objects involved in this context.
Suppose that $A$ is a subring of $R$.
The {\it centralizer} of $A$ in $R$, denoted by $C_R(A)$, is the 
set of elements in $R$ that commute with every element of $A$.
If $C_R(A) = A$, then $A$ is said to be a {\it maximal commutative} 
subring of $R$. The set $C_A(A)$ is called the {\it center} of $A$
and is denoted by $Z(A)$.
Let $S$ be an inverse semigroup and let $R$ be a system.
Let $E(S)$ denote the set of all idempotents of $S$ and
put $R_0 = \sum_{e \in E(S)} R_s$.
We say that an ideal $I$ of $R$ is a {\it system ideal} if $I = \sum_{s \in S} I \cap R_s$
and we say that $R$ is {\it system simple} if $R$ and $\{ 0 \}$ are the only
system ideals of $R$. 
We say that $R$ is a left (right) {\it s-unital epsilon-strong system} if for all $s \in S$
the left $R_s R_{s^*}$-module (right $R_{s^*} R_s$-module) $R_s$ is s-unital.
Note that if $S$ is a group and $R$ is $S$-graded, then $R$ is
an epsilon-strong system precisely when it is epsilon-strongly graded
in the sense of Nystedt, \"{O}inert and Pinedo \cite{NOP2016}.

\begin{thm}\label{maintheorem}
If $S$ is an inverse semigroup and 
$R$ is a system simple cohe\-rent left (or right) $s$-unital epsilon-strong system
and $C_R( Z(R_0) ) \subseteq R_0$, then $R$ is simple.
\end{thm}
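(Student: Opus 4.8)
The plan is to prove that every nonzero two-sided ideal $I$ of $R$ is all of $R$, organising the whole argument around an \emph{ideal intersection property}: that $I\cap R_0\neq\{0\}$. First I would extract the purely order-theoretic consequences of coherence. In an inverse semigroup one has $es\leq s$ and $se\leq s$ for all $s\in S$ and $e\in E(S)$, so coherence gives $R_eR_s\subseteq R_{es}\subseteq R_s$ and $R_sR_e\subseteq R_{se}\subseteq R_s$; summing over $e$ shows that each $R_s$ is an $R_0$-bimodule, that $R_0$ is a subring, and that $R_sR_{s^*},R_{s^*}R_s\subseteq R_0$. The s-unital epsilon-strong hypothesis then supplies local units drawn from $R_0$: for each finite subset of any $R_s$ there is $u\in R_sR_{s^*}\subseteq R_0$ acting as a left identity on it, and in particular $R_0$ is an s-unital ring.

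Next I would show that the intersection property suffices. Given a nonzero ideal $I$, put $N=I\cap R_0$ and consider the ideal $\langle N\rangle$ of $R$ generated by it. Since $R_sR_0R_t\subseteq R_sR_t\subseteq R_{st}$, each generator $rnr'$ with $r\in R_s$, $r'\in R_t$ and $n\in N$ lies in the single component $R_{st}$; and, choosing an $R_0$-local unit $v$ with $vn=n$ and expanding $v=\sum_e v_e$, even a bare generator $n=\sum_e v_en$ decomposes into homogeneous pieces $v_en\in\langle N\rangle\cap R_e$. Hence $\langle N\rangle=\sum_{u\in S}(\langle N\rangle\cap R_u)$ is a \emph{system ideal} contained in $I$. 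Thus if $N\neq\{0\}$ then $\langle N\rangle\neq\{0\}$, and system simplicity forces $\langle N\rangle=R$, whence $I=R$. It therefore remains only to prove $I\cap R_0\neq\{0\}$ for every nonzero ideal $I$, and this is where the hypothesis $C_R(Z(R_0))\subseteq R_0$ is meant to enter.

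For the intersection property I would run a minimal-support commutator argument. Fixing finite representations $x=\sum_{s\in T}x_s$ with $x_s\in R_s$, I would first use the epsilon-strong structure to produce a nonzero element of $I$ possessing a component in $R_0$: for $0\neq x\in I$ and $s_0\in T$, left s-unitality yields $a\in R_{s_0^*}$ with $ax_{s_0}\neq 0$, and $ax_{s_0}\in R_{s_0^*s_0}\subseteq R_0$. Among all such elements I would take one, $x$, with a component $x_{s_0}\in R_0$ and of minimal support. The key point is that, being in $R_0$, this component commutes with every $c\in Z(R_0)$; since multiplication by $c\in R_0$ preserves each $R_s$, the commutator $cx-xc=\sum_{s\in T}(cx_s-x_sc)\in I$ has vanishing $s_0$-component and so strictly smaller support. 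Minimality then forces $cx-xc=0$ for all $c\in Z(R_0)$, that is $x\in C_R(Z(R_0))\subseteq R_0$, so that $0\neq x\in I\cap R_0$, as required. The right s-unital case is symmetric, with right multiplication by $R_{s^*}$.

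The hard part will be entirely in this last step, and it stems from the fact that, because of coherence, the sum $R=\sum_sR_s$ is very far from direct. Consequently neither the support $T$ nor the individual components $x_s$ — nor even the phrase ``the component in $R_0$'' — is canonically defined, and the commutator bookkeeping must be performed relative to a fixed representation while ensuring that deleting the $s_0$-term does not secretly regenerate support elsewhere, and that the minimisation runs over a class genuinely closed under $x\mapsto cx-xc$. Tightly bound up with this is a non-cancellation problem in the very first move: left multiplication by $a\in R_{s_0^*}$ shifts degrees by $s\mapsto s_0^*s$, which may collapse distinct indices and could a priori annihilate precisely the $R_0$-component one is trying to create, so the shift and the local units must be chosen compatibly (exploiting the idempotent structure of $E(S)$ to separate components). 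Making this step robust is the crux; by contrast, the reduction via system simplicity and the final application of $C_R(Z(R_0))\subseteq R_0$ are clean and essentially formal.
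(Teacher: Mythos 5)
Your overall strategy is the paper's own: reduce simplicity to an ideal intersection property via system simplicity (Theorem \ref{ZR0MAX}), and establish that property by a minimal-length commutator argument against $Z(R_0)$ (Propositions \ref{propintersection} and \ref{minimalprop}). However, the two difficulties you flag in your final paragraph are not side issues to be smoothed over later -- the second one is the actual mathematical content of the proof, and you leave it open. The first worry (non-canonical components, and whether the minimisation class is closed under $x\mapsto cx-xc$) is dispatched cheaply: the paper minimises the function $d(r)$, defined as the \emph{minimal} number of nonzero homogeneous terms over all representations of $r$, over all of $I\setminus\{0\}$ rather than over elements with a distinguished $R_0$-component. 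Then no closure condition is needed: any element of $I$ admitting a representation with fewer than $d_{\min}$ homogeneous terms is automatically zero, which is all the commutator step uses, and the non-directness of $R=\sum_s R_s$ is harmless because one only ever works with one fixed representation and an upper bound on $d$.

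The genuine gap is your first move, producing a nonzero element of $I$ whose representation has a term in $R_0$: as you yourself observe, $ax$ could vanish, or $ax_{s_0}$ could be cancelled by other terms $ax_s$ that also land in idempotent degrees, and nothing in your sketch prevents this. The paper closes this by reversing the order of operations. For an $I$-minimal $r=\sum_{i=1}^n r_i$ with $r_i\in R_{s_i}$ nonzero, choose $\epsilon_{s_1}\in R_{s_1}R_{s_1^*}$ with $\epsilon_{s_1}r_1=r_1$ and consider $r-\epsilon_{s_1}r=\sum_{i=2}^n(r_i-\epsilon_{s_1}r_i)$. Here there is no degree shift at all: by idempotent coherence each $r_i-\epsilon_{s_1}r_i$ stays in $R_{s_i}$, and the first term is annihilated in place, so $d(r-\epsilon_{s_1}r)<n$ and minimality forces $r=\epsilon_{s_1}r\neq 0$. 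Writing $\epsilon_{s_1}=\sum_j a_jb_j$ with $b_j\in R_{s_1^*}$ then guarantees some $b_j$ with $r':=b_jr\neq 0$; since $d(r')\leq n$, this $r'$ is again $I$-minimal, and its representation $\sum_i b_jr_i$ has its first term in $R_{s_1^*s_1}\subseteq R_0$, after which your commutator step runs verbatim and gives $r'\in I\cap C_R(Z(R_0))\subseteq I\cap R_0$. Without this (or an equivalent) device the proposal does not close. A smaller quibble: your claim that a bare generator $n\in I\cap R_0$ splits into homogeneous pieces $v_en\in R_e$ is false as stated ($v_en\in R_eR_0$ lies in $R_0$ but not in a single component); it is cleaner to pass to the system ideal $RJ+RJR\subseteq I$ with $J=I\cap R_0$, whose homogeneous decomposition is immediate from $R_sJ\subseteq R_s$ and $R_sJR_t\subseteq R_{st}$.
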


We use this result to obtain non-commutative and non-unital generali\-zations
of Theorem \ref{theoremsteinberg} and Theorem \ref{thmbeuter}
(see Theorem \ref{secondmaintheorem} and Theorem \ref{thirdmaintheorem} below).
Here is a detailed outline of the article.
In Section \ref{modules}, we recall some definitions and results
concerning unital, locally unital and s-unital modules that we need in the sequel.
In Section \ref{systems}, we recall the relevant background on systems and graded rings. 
We obtain sufficient conditions for simplicity for systems (see Theorem \ref{ZR0MAX}).
In Section \ref{epsilonstrongsystems}, we define left and right 
unital (s-unital) epsilon-strong systems (see Definition \ref{defunitalepsilon})
as well as giving some equivalent conditions characterizing them
(see Proposition \ref{epsilonequivalent} and Corollary \ref{epsiloncor}).
At the end of this section, we prove Theorem \ref{maintheorem}.
In Section \ref{partialskewsection}, we recall the definition of
skew inverse semigroup rings and
we determine when such rings are left (right) epsilon-strong systems
(see Proposition \ref{associative}).
At the end of this section, using Theorem \ref{maintheorem}, 
we prove the following s-unital generalization of Theorem \ref{thmbeuter}.

\begin{thm}\label{secondmaintheorem}
Suppose that $\pi$ is an s-unital partial action of 
an inverse semigroup $S$ on an associative (but not necessarily commutative) 
$s$-unital ring $A$.
If $A$ is $S$-simple and $C_{A \rtimes_{\pi} S}( Z(A) ) \subseteq A$, 
then $A \rtimes_{\pi} S$ is simple.
If $A$ is commutative, then $A \rtimes_{\pi} S$ is simple if and only if
$A$ is $S$-simple and $A$ is a maximal 
commutative subring of $A \rtimes_{\pi} S$.
\end{thm}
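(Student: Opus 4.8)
The plan is to realize $R := A \rtimes_{\pi} S$ as a coherent system and then invoke Theorem \ref{maintheorem}. First I would set $R_s = \overline{D_s \delta_s}$ for $s \in S$, as in the observation of Beuter and Goncalves recalled in the introduction; this makes $R$ a coherent system, and by Proposition \ref{associative} the hypotheses that $\pi$ is s-unital and $A$ is s-unital guarantee that $R$ is associative and a left (or right) s-unital epsilon-strong system. The crucial identification is $R_0 = \sum_{e \in E(S)} R_e = A$, the canonical copy of $A$ inside $R$; consequently $Z(R_0) = Z(A)$ and $C_R(Z(R_0)) = C_{A \rtimes_{\pi} S}(Z(A))$, so the standing hypothesis $C_{A \rtimes_{\pi} S}(Z(A)) \subseteq A$ is exactly the condition $C_R(Z(R_0)) \subseteq R_0$ demanded by Theorem \ref{maintheorem}.

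The one remaining input is system simplicity of $R$, which I would extract from $S$-simplicity of $A$ via a correspondence between system ideals of $R$ and $S$-invariant ideals of $A$. Given a nonzero system ideal $I = \sum_s I \cap R_s$, the epsilon-strong structure forces $I \cap R_0 \neq 0$: choosing $0 \neq y \in I \cap R_s$ and writing $y = uy$ with $u \in R_s R_{s^*}$ by left s-unitality, a nonzero factor of the form $q y \in R_{s^*} R_s \subseteq R_{s^*s} \subseteq R_0$ must survive, and it lies in $I \cap R_0$. Since $I \cap R_0$ is then a nonzero $S$-invariant ideal of $A$, it equals $A = R_0$ by $S$-simplicity; finally, for any $x \in R_s$ left s-unitality gives $u \in R_s R_{s^*} \subseteq R_0 \subseteq I$ with $x = ux \in I$, whence $R = \sum_s R_s \subseteq I$. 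Thus $R$ is system simple and Theorem \ref{maintheorem} yields that $R$ is simple, establishing the first assertion. In the commutative case $Z(A) = A$, so $C_{A \rtimes_{\pi} S}(A) \subseteq A$ together with the automatic inclusion $A \subseteq C_{A \rtimes_{\pi} S}(A)$ says precisely that $A$ is maximal commutative; hence the reverse implication of the equivalence is the first assertion applied verbatim.

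For the forward implication in the commutative case I would show that simplicity of $R$ returns both conditions. That $A$ is $S$-simple follows by reversing the correspondence above: a proper nonzero $S$-invariant ideal $J$ of $A$ would generate a system ideal meeting $R_0$ in $J \neq A$, i.e.\ a nonzero proper ideal of the simple ring $R$, a contradiction. That $A$ is maximal commutative is the main obstacle, since it is the one conclusion not handed over directly by Theorem \ref{maintheorem}. Here I would note that simplicity trivially gives the property that every nonzero ideal meets $R_0 = A$ nontrivially (the unique nonzero ideal $R$ meets $A$ in $A \neq 0$), and then argue that, for a commutative $R_0$ in an s-unital epsilon-strong system, this forces $C_R(R_0) = R_0$. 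The crux is that $C_R(R_0)$ is compatible with the grading, so a hypothetical element of $C_R(R_0) \setminus R_0$ yields a nonzero homogeneous $x \in C_R(R_0) \cap R_s$ with $s \notin E(S)$; from such an $x$ one must manufacture a nonzero ideal meeting $R_0$ trivially, contradicting simplicity. Controlling the supports of the resulting ideal, and using epsilon-strong s-unitality to move between degrees without leaving the commutant, is the delicate verification where most of the work will lie.
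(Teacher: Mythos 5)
Your overall architecture matches the paper's: realize $R = A \rtimes_{\pi} S$ as a coherent left (right) s-unital epsilon-strong system, identify $R_0$ with $A$ via the canonical isomorphism, translate system simplicity into $S$-simplicity of $A$, and feed everything into Theorem \ref{maintheorem}. The first assertion of the theorem and the reverse implication of the commutative case are handled correctly this way (the paper proves the system-simple/$S$-simple equivalence in Proposition \ref{propsystemsimple} using the coefficient map $t$ rather than $I \cap R_0$, but your variant is workable; note, though, that you assert the $S$-invariance of $I \cap R_0$ without proof, and this does require an explicit conjugation computation of the type $\overline{d\delta_s}\cdot i(a)\cdot \overline{c\delta_{s^*}}$ with s-units chosen appropriately).

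The genuine gap is the forward implication ``$A \rtimes_{\pi} S$ simple $\Rightarrow$ $A$ maximal commutative,'' which you yourself flag as ``the delicate verification where most of the work will lie'' and do not carry out. Worse, the route you sketch is unsound as stated: you propose to extract from a hypothetical $c \in C_R(R_0)\setminus R_0$ a nonzero ``homogeneous'' element $x \in C_R(R_0)\cap R_s$ with $s \notin E(S)$, but $A \rtimes_{\pi} S$ is only a system, not a graded ring --- the sum $R = \sum_{s} R_s$ is not direct (it is a quotient of $L_\pi$ by the relations $a\delta_r = a\delta_s$ for $r \leq s$), so there are no well-defined components and no reason the centralizer of $R_0$ should decompose along the $R_s$. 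The paper instead proves (Proposition \ref{propequivalence}) that for commutative s-unital $A$, maximal commutativity of $A$ is \emph{equivalent} to the ideal intersection property of $(A\rtimes_\pi S)/A$; the direction you need (ideal intersection property $\Rightarrow$ maximal commutative) is imported from the first part of the proof of Theorem 3.4 of Beuter--Goncalves--\"{O}inert--Royer, where one starts from $c \in C_R(A)\setminus A$ and explicitly constructs a nonzero ideal meeting $A$ trivially without ever invoking homogeneous components. Until you supply such a construction, the ``only if'' half of the commutative case is not proved.
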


In Section \ref{sectionsteinbergalgebras}, we recall from \cite{beuter2017}
the description of Steinberg algebras as partial skew inverse 
semigroup rings. We use this description and Theorem \ref{secondmaintheorem}
to prove the following non-commutative and s-unital 
generalisation of Theorem \ref{theoremsteinberg}.

\begin{thm}\label{thirdmaintheorem}
Suppose that $K$ is a non-zero and associative (but not necessarily commu\-tative
or unital) ring with the property that $Z(K)$ contains a set of s-units for $K$.
If $G$ is a Hausdorff and ample groupoid, then the Steinberg algebra $A_K(G)$
is simple if and only if $G$ is effective and minimal, and $K$ is simple.
\end{thm}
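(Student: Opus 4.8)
The plan is to deduce the result from Theorem \ref{secondmaintheorem} by realizing the Steinberg algebra as a skew inverse semigroup ring. First I would invoke the description from \cite{beuter2017}: for a Hausdorff ample groupoid $G$, the algebra $A_K(G)$ is isomorphic to a skew inverse semigroup ring $A \rtimes_{\pi} S$, where $A = A_K(G^{(0)})$ is the ring of locally constant compactly supported $K$-valued functions on the unit space, $S$ is the inverse semigroup of compact open bisections of $G$, and $\pi$ is the induced partial action. My next step would be to check that the standing hypotheses of Theorem \ref{secondmaintheorem} hold. Since $Z(K)$ contains a set of s-units for $K$ and $G^{(0)}$ is covered by compact open sets, products of the form $u \cdot 1_U$, for compact open $U \subseteq G^{(0)}$ and central s-units $u$, furnish a set of s-units for $A$ and witness that $\pi$ is an $s$-unital partial action. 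The centrality of the s-units is exactly what keeps these local units inside $Z(A)$ and makes the relevant module conditions symmetric.

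For the sufficiency direction, assume that $G$ is effective and minimal and that $K$ is simple. I would first show that $A = A_K(G^{(0)})$ is $S$-simple. An $S$-invariant ideal of $A$ decomposes according to an invariant open subset of $G^{(0)}$ together with a two-sided ideal of $K$, namely the functions supported on the invariant set and valued in the ideal; minimality of $G$ rules out proper invariant open sets and simplicity of $K$ rules out proper ideals of $K$, so the only $S$-invariant ideals of $A$ are $\{0\}$ and $A$. Next I would translate effectiveness of $G$ into the centralizer condition $C_{A \rtimes_{\pi} S}(Z(A)) \subseteq A$: since $Z(A) = A_{Z(K)}(G^{(0)})$ separates the points of $G^{(0)}$, any element commuting with all of $Z(A)$ must be supported on the isotropy of $G$, and effectiveness (the interior of the isotropy equals $G^{(0)}$) then pushes its support onto $G^{(0)}$, i.e.\ into $A$. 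With these two facts in hand, Theorem \ref{secondmaintheorem} applies and yields simplicity of $A_K(G)$.

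For the necessity direction, assume that $A_K(G)$ is simple and derive each of the three conditions by exhibiting a proper nonzero ideal whenever one of them fails. If $K$ has a proper nonzero two-sided ideal $J$, then the functions valued in $J$ form a proper nonzero ideal of $A_K(G)$, forcing $K$ to be simple. If $G$ has a proper nonempty invariant open subset $U$, then the functions supported on the reduction $G|_U$ form a proper nonzero ideal, forcing $G$ to be minimal. Finally, if $G$ fails to be effective, the nontrivial interior of the isotropy bundle supports a nonzero element of $C_{A \rtimes_{\pi} S}(Z(A))$ lying outside $A$, which generates a proper nonzero ideal; hence $G$ must be effective.

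The step I expect to be the main obstacle is the translation of effectiveness into the centralizer condition $C_{A \rtimes_{\pi} S}(Z(A)) \subseteq A$ in the non-commutative setting. In the classical case treated in \cite{clark2015} and \cite{beuter2017} one argues via maximal commutativity of the commutative coefficient ring $A$; here $A$ is genuinely non-commutative, so the correct replacement is the centralizer of the center $Z(A) = A_{Z(K)}(G^{(0)})$, and one must verify that $Z(K)$ still separates the points of $G^{(0)}$ finely enough to localize supports onto the isotropy. Establishing this, together with the bookkeeping needed to confirm that the $s$-unital structure is inherited from the central s-units of $K$ through the Beuter--Goncalves isomorphism, constitutes the technical core of the argument.
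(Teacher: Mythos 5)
Your overall architecture is exactly the paper's: realize $A_K(G)$ as the skew inverse semigroup ring $L_c(G_0)\rtimes_{\pi}G^a$, check s-unitality via the central s-units of $K$, translate minimality plus simplicity of $K$ into $G^a$-simplicity of the coefficient ring and effectiveness into the centralizer condition, feed this into Theorem \ref{secondmaintheorem} for sufficiency, and exhibit proper ideals for necessity. Two of your intermediate steps, however, are stated in a form that does not hold and would need to be replaced by the actual arguments.

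First, your claim that an $S$-invariant ideal of $A=L_c(G_0)$ ``decomposes according to an invariant open subset of $G_0$ together with a two-sided ideal of $K$, namely the functions supported on the invariant set and valued in the ideal'' is false as stated: already for a discrete two-point unit space and $K$ with two distinct ideals $J_1\neq J_2$, the ideal $J_1\oplus J_2$ of $K\oplus K$ has no such product form. The correct route (Proposition \ref{gen1} together with Lemma \ref{thelemma} in the paper) is to take the open set $U$ of points where some function in the ideal is nonvanishing, prove $U$ is invariant, invoke minimality to get $U=G_0$, then use simplicity of $K$ to manufacture $k_V\in I$ near every point and a compactness/minimal-support argument to conclude $I=L_c(G_0)$. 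Second, in the necessity direction you assert that a witness to non-effectiveness ``generates a proper nonzero ideal''; this is precisely the content of Proposition \ref{faithfulsimpleprop}, which requires the explicit computation that the ideal generated by $\overline{a\delta_s}-\overline{a\delta_{ss^*}}$ (with $\pi_s={\rm id}$) is killed by the augmentation map $t$, hence cannot be all of $A\rtimes_{\pi}S$. Neither issue reflects a wrong approach --- both conclusions are true and are proved in the paper --- but as written these steps are assertions rather than proofs, and the first rests on a false structural claim. Your handling of the non-minimal case (exhibiting $A_K(G|_U)$ directly) is a legitimate shortcut compared with the paper's detour through system simplicity.
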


In Section \ref{gradedrings}, we specialize the above results
to groupoid (and group) graded rings.
In particular, we obtain necessary and sufficient conditions
for partial skew groupoid rings, over commutative rings, to
be simple (see Theorem \ref{oinertgen}).

\section{Unital, locally unital and s-unital modules}\label{modules}

In this section, we recall some definitions and results
concerning unital, locally unital and s-unital modules that we need in the sequel.
Throughout this article all rings are supposed to be associative, but,
unless otherwise stated, not necessarily commutative or unital.
For the remainder of this section, let
$A$ and $B$ be rings and let $M$ be an $A$-$B$-bimodule.
If $X \subseteq A$ ($Y \subseteq B$), then we let $XM$ ($MY$) 
denote the set of all finite sums of elements of the 
form $xm$ ($my$), for $x \in X$ ($y \in Y$) and $m \in M$.
We let $\mathbb{N}$ denote the set of positive integers
and we let $\mathbb{Z}_{\geq 0}$ denote the set of non-negative integers.

\begin{defi}
Recall that $M$ is said to be left (right) {\it unital} if there exists
$a \in A$ ($b \in B$) such that for all $m \in M$ the relation $a m = m$ ($mb=m$) holds.
In that case, $a$ ($b$) is said to be a left (right) {\it identity} for $M$.
$M$ is said to be a unital as an $A$-$B$-bimodule,
if it is unital both as a left $A$-module and a right $B$-module.
The ring $A$ is said to be left (right) unital if it is
left (right) unital as a left (right) module over itself.
The ring $A$ is called called unital if it is unital 
as an $A$-bimodule over itself.
\end{defi}

\begin{rem}\label{twosided}
Note that if $A=B$ so that $M$ is a unital $A$-bimodule, then there is $c \in A$
which is simultaneously an left identity and a right identity for $M$.
In fact, if $a \in A$ is a left identity for $M$ and $b \in A$ is a right identity for $M$,
then $c = a + b - ba \in A$ is a two-sided identity for $M$.
\end{rem}

\begin{defi}
If $A=B$, then the $A$-bimodule $M$ is called
{\it locally unital} if for all finite subsets $X$ of $M$ there exists
an idempotent $e$ in $A$ such that $X \subseteq e X e$.
The ring $A$ is called locally unital if it is locally unital 
as a bimodule over itself. 
\end{defi}

\begin{exa}
There are lots of examples of locally unital rings. For instance, if $\{ A_i \}_{i \in I}$
is a family of non-zero unital rings and we let $A$ be the direct sum of the rings $A_i$,
then $A$ is locally unital. However, $A$ is unital if and only if $I$ is finite.
\end{exa}

\begin{defi}
Following H. Tominaga \cite{tominaga1976} we say that $M$
is left (right) $s$-{\it unital} if for all $m \in M$ there exists
$a \in A$ ($b \in B$) such that $am = m$ ($mb = m$).
The $A$-$B$-bimodule $M$ is said to be s-unital if it is 
s-unital both as a left $A$-module and a right $B$-module.
The ring $A$ is said to be left (right) $s$-unital
if it is left (right) $s$-unital as a left (right) module over itself.
The ring $A$ is said to be $s$-unital if it is s-unital
as a bimodule over itself.
\end{defi}

\begin{exa}\label{notlocallyunital}
If we let $X$ be a compact Hausdorff topological space, then
the ring $C_0(X)$ of compactly supported continuous functions $X \to \mathbb{R}$
is s-unital. However, $C_0(X)$ is locally unital if and only if $X$ is compact
(in which case $C_0(X)$ is unital).
\end{exa}

\begin{exa}
The following example (inspired by \cite[Exercise 1.10]{lam2003}) 
shows that there are lots of examples of rings which are left
s-unital but not right s-unital. 
Let $A$ be a unital ring with a non-zero multiplicative identity 1. 
Let $B$ denote the set $A \times A$  equipped with componentwise addition,
and multiplication defined by the relations $(a,b) (c,d) = (ac,ad)$, for $a,b,c,d \in A$. 
It is easy to check that $B$ is associative. 
It is clear that any element of the form $(1,a)$, for $a \in A$, 
is a left identity for $B$. 
However, $B$ is not right unital. 
Indeed, since $(0,1) \notin \{ (0,0) \} = (0,1) B$ 
it follows that $B$ is not even right s-unital. 
For each $n \in \mathbb{N}$ let $C_n$ denote a copy of $B$ and put 
$C = \oplus_{n \in \mathbb{N}} C_n$. 
Then $C$ is left s-unital but not left unital. 
Since none of the $C_n$ are right s-unital it follows that $C$ is not right s-unital. 
\end{exa}

\begin{prop}\label{tominaga}
Let $M$ be an $A$-$B$-bimodule.
\begin{itemize}

\item[(a)] $M$ is left (right)
$s$-unital if and only if for all $n \in \mathbb{N}$ and
all $m_1,\ldots,m_n$ in $M$ there is $a \in A$ ($b \in B$) such that
for all $i \in \{ 1,\ldots,n \}$ the relation $a m_i = m_i$ ($m_i b = m_i$) holds.

\item[(b)] If $A=B$, then the $A$-bimodule $M$ is s-unital
if and only if for all $n \in \mathbb{N}$ and
all $m_1,\ldots,m_n \in M$ there is $c \in A$ such that
for all $i \in \{ 1,\ldots,n \}$ the relations $c m_i = m_i c = m_i$ hold.

\item[(c)] The ring $A$ is s-unital if and only if 
for all $n \in \mathbb{N}$ and all $a_1,\ldots,a_n$ in $A$ 
there is $c \in A$ such that
for all $i \in \{ 1,\ldots,n \}$ the relations $c a_i = a_i c = a_i$ hold.

\end{itemize}
\end{prop}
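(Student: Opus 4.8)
The plan is to reduce everything to part (a), whose left-hand statement is the heart of the matter; the right-hand statement will follow by a symmetric argument, and parts (b) and (c) are then obtained by combining the one-sided results. In part (a) the backward implication is immediate: specializing the stated condition to $n=1$ is precisely the definition of left $s$-unitality. So the work lies in the forward implication, which I would prove by induction on $n$, the base case $n=1$ being again just the definition.

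For the inductive step, suppose the claim holds for $n$ and let $m_1,\ldots,m_{n+1} \in M$ be given. By the induction hypothesis there is $a \in A$ with $a m_i = m_i$ for $i \in \{1,\ldots,n\}$, and, applying left $s$-unitality to the single element $m_{n+1} - a m_{n+1}$, there is $a' \in A$ with $a'(m_{n+1} - a m_{n+1}) = m_{n+1} - a m_{n+1}$. The key is to set $b = a + a' - a'a$, a combination of the same shape as the one used in Remark \ref{twosided}. For $i \leq n$ one computes $b m_i = a m_i + a'm_i - a'(a m_i) = m_i + a'm_i - a'm_i = m_i$, using $a m_i = m_i$; and $b m_{n+1} = a m_{n+1} + a'(m_{n+1} - a m_{n+1}) = a m_{n+1} + (m_{n+1} - a m_{n+1}) = m_{n+1}$. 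Hence $b$ is a common left local unit for all $n+1$ elements, completing the induction. The right-hand assertion is proved identically, replacing left by right multiplication and the combination $b = a + a' - a'a$ by $b = a + a' - a a'$.

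To obtain part (b), given $m_1,\ldots,m_n$ I would first apply part (a) on the left to get $a$ with $a m_i = m_i$ for all $i$, and then part (a) on the right to get $b$ with $m_i b = m_i$ for all $i$. Following Remark \ref{twosided}, set $c = a + b - ba$. Since $a m_i = m_i$ we get $b a m_i = b m_i$, so $c m_i = a m_i + b m_i - b m_i = m_i$; and since $m_i b = m_i$ we get $m_i b a = m_i a$, so $m_i c = m_i a + m_i b - m_i a = m_i$. Thus $c$ is a common two-sided local unit, as required. Finally, part (c) is just part (b) applied to the $A$-bimodule $M = A$, since by definition $A$ is $s$-unital exactly when it is $s$-unital as a bimodule over itself. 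The only genuine obstacle is guessing the correct combinations $a + a' - a'a$ and $a + b - ba$ and checking that they fix the previously fixed elements without disturbing them; once these quasi-regular expressions are written down, every verification is a one-line computation.
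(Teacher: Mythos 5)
Your proof is correct, and for parts (b) and (c) it follows exactly the route the paper indicates, namely combining the one-sided units via the quasi-unit expression $c = a + b - ba$ from Remark \ref{twosided}. For part (a) the paper simply cites Tominaga's Theorem 1 rather than proving it, and your induction with $b = a + a' - a'a$ is precisely the standard argument behind that cited result, so there is no substantive difference in approach.
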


\begin{proof}
(a) This is \cite[Theorem 1]{tominaga1976}.
(b) Follows if we use the same argument as in the unital case 
in Remark \ref{twosided}.
(c) Follows from (b).
\end{proof}

\begin{defi}\label{unitary}
Following \'{A}nh and M\'{a}rki \cite{anh1987} we say that
$M$ is left (right) {\it unitary} if $AM = M$ ($MB = M$).
\end{defi}

\section{Systems}\label{systems}

In this section, we recall the relevant background on systems and graded rings
and we obtain sufficient conditions for simplicity for systems (see Theorem \ref{ZR0MAX}).
Throughout this section, $S$ denotes a {\it semigroup},
that is a non-empty set equipped
with an associative binary operation 
$S \times S \ni (s,t) \mapsto st \in S$,
and $R$ denotes a {\it system}. Recall, from the introduction, that the latter means that
there to every $s \in S$ is an additive subgroup $R_s$ of $R$ 
such that $R = \sum_{s \in S} R_s$ and for all $s,t \in S$
the inclusion $R_s R_t \subseteq R_{st}$ holds.

\begin{defi}
The ring $R$ is called a {\it strong system} if for all
$s,t \in S$ the equality $R_s R_t = R_{st}$ holds.
The ring $R$ is called {\it graded} if $R = \oplus_{s \in S} R_s$. 
If $R$ is graded, then $R$ is called {\it strongly graded} if it is also a strong system.
Let $E(S)$ denote the set of idempotents of $S$ and
put $R_0 = \sum_{e \in E(S)} R_e$.
We say that $R$ is {\it idempotent coherent} if for all $s \in S$
the inclusions $R_0 R_s \subseteq R_s$ and $R_s R_0 \subseteq R_s$ hold.
In that case, clearly, $R_0$ is a subring of $R$.
\end{defi}

Now we extend a definition from the group (or groupoid) graded case \cite{cohen1983} 
(or \cite{lundstrom2012}) to the semigroup system situation.

\begin{defi}
We say that $R$ is left (right) {\it non-degenerate} if 
for all all $s \in E(S)$ and all non-zero $r \in R_s$, 
there is $t \in S$ such that $t s \in E(S)$ ($s t \in E(S)$) and
$R_t r$ is non-zero ($r R_t$ is non-zero).
\end{defi}

\begin{defi}\label{degreemap}
In the sequel we will use the function 
$d : R \rightarrow \mathbb{Z}_{\geq 0}$ defined in the following way.
If $r = 0$, then put $d(r)=0$.
Now suppose that $r \neq 0$. 
Then there is $n \in \mathbb{N}$,
$s_1,\ldots,s_n \in S$ and non-zero $r_i \in R_{s_i}$,
for $i = 1,\ldots,n$, such that $r = \sum_{i=1}^n r_i$.
Amongst all such representations of $r$, choose one
with $n$ minimal. Put $d(r)=n$.
If $I$ is an ideal and $r \in I$, then we say that $r$ is $I$-{\it minimal}
if $d(r) = {\rm min} \{ d(r') \mid r' \in I \setminus \{ 0 \} \}$.
\end{defi}

\begin{defi}
Suppose that $A/B$ is a {\it ring extension} i.e. that
$A$ and $B$ are rings with $A \supseteq B$.
Recall that the {\it centralizer} of $B$ in $A$, denoted by $C_A(B)$, is the 
set of elements in $A$ that commute with every element of $B$.
If $C_A(B) = B$, then $B$ is said to be a {\it maximal commutative 
subring} of $A$. The set $C_A(A)$ is called the {\it center} of $A$
and is denoted by $Z(A)$.
The ring extension $A/B$ is said to have the {\it ideal 
intersection property} if every non-zero ideal of $A$
has non-zero intersection with $B$.
\end{defi}

\begin{prop}\label{propintersection}
If $R$ is idempotent coherent and left (right) non-degenerate,
then $R / C_R ( Z(R_0) )$ has the ideal intersection property. 
\end{prop}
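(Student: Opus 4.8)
The plan is to show that every nonzero two-sided ideal $I$ of $R$ meets $C_R(Z(R_0))$ nontrivially, and the whole argument hangs on one structural observation together with a degree-reduction. The observation is that idempotent coherence makes $R_0$ a subring, so every element of $R_0$ commutes with the center $Z(R_0)$; hence $R_0 \subseteq C_R(Z(R_0))$. Because of this, it suffices to produce a single nonzero element of $I$ that commutes with all of $Z(R_0)$, and I will get such an element by showing that \emph{any} nonzero element of $I$ possessing a homogeneous component indexed by an idempotent must in fact lie in $C_R(Z(R_0))$.

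The engine of the proof is the function $d$ of Definition \ref{degreemap}. First I would fix an $I$-minimal element $0 \neq r \in I$, writing $r = \sum_{i=1}^n r_i$ with $r_i \in R_{s_i} \setminus \{0\}$ and $n = d(r)$ minimal over $I \setminus \{0\}$. For any $z \in Z(R_0)$, idempotent coherence gives $z r_i \in R_0 R_{s_i} \subseteq R_{s_i}$ and $r_i z \in R_{s_i} R_0 \subseteq R_{s_i}$, so the commutator splits as $zr - rz = \sum_{i=1}^n (z r_i - r_i z)$ with the $i$th summand lying in $R_{s_i}$; thus $d(zr-rz) \leq n$ while $zr-rz \in I$. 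The decisive point is that if one of the indices, say $s_1$, is idempotent, then $r_1 \in R_0$ and the observation above forces $z r_1 - r_1 z = 0$, so the $s_1$-slot of $zr-rz$ vanishes and $d(zr-rz) \leq n-1 < n$. Minimality of $n$ then forces $zr - rz = 0$, and since $z \in Z(R_0)$ was arbitrary this yields a nonzero element $r \in I \cap C_R(Z(R_0))$, as required.

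It remains to arrange that $I$ contains an $I$-minimal element having at least one homogeneous component indexed by an idempotent, and this reduction is where I expect the real difficulty to lie and where the non-degeneracy hypothesis enters. The natural idea, starting from an arbitrary $I$-minimal $r$, is to multiply by a homogeneous element so as to push one component into $R_0$: replacing $r$ by $ar$ with $a \in R_t$ sends the $s_i$-component into $R_{t s_i}$, and one wants both $t s_i \in E(S)$ for some $i$ and $ar \neq 0$. Since $ar \in I$, minimality already forbids $0 < d(ar) < n$, so the only obstructions are total collapse and the death of the relevant slot; ruling these out is precisely the content of left (right) non-degeneracy, which forbids a nonzero element sitting over an idempotent from being annihilated by all such multiplications and so lets one retain a nonzero $R_0$-component at the minimal degree. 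The hard part is carrying out this passage cleanly in the bare semigroup setting, keeping track of how the degree behaves under multiplication; once a minimal element with an idempotent-indexed component is secured, the degree-reduction of the previous paragraph applies verbatim and completes the proof.
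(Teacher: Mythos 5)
Your overall strategy coincides with the paper's: take an $I$-minimal $r=\sum_{i=1}^n r_i$, use non-degeneracy to multiply it into a position where one homogeneous summand lies in $R_0$, and then show that for every $w\in Z(R_0)$ the commutator has fewer than $n$ homogeneous summands and hence vanishes by $I$-minimality. Your degree-reduction paragraph is correct and is essentially verbatim the paper's second half: the $R_0$-summand commutes with $w$, the remaining summands stay homogeneous by idempotent coherence, and $d(wr-rw)<n$ forces $wr=rw$, so $r\in I\cap C_R(Z(R_0))$.

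The gap is the reduction step you explicitly defer, and it is a real gap in your write-up even though it is exactly where the paper also leans hardest on the hypothesis. The paper closes it as follows: choose $t\in S$, an index $i$ and $x\in R_t$ with $ts_i\in E(S)$ and $xr\neq 0$; put $r'=xr=\sum_j xr_j$ and note that $d(r')\le d(r)=n$, so $r'$ is again $I$-minimal, while $xr_i\in R_tR_{s_i}\subseteq R_{ts_i}\subseteq R_0$ supplies the $R_0$-summand. In particular you do not need the new element to have an idempotent-indexed slot in some abstract minimal decomposition; the explicit decomposition $\sum_j xr_j$ has length at most $n$ with one term in $R_0$, which is all your second paragraph uses. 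Two further cautions. Your paraphrase of non-degeneracy ("forbids a nonzero element sitting over an idempotent from being annihilated") is not the version the argument needs: to move the $s_i$-component into $R_0$ you must apply the condition to $r_i\in R_{s_i}$ for an \emph{arbitrary} $s_i\in S$, obtaining $t$ with $ts_i\in E(S)$ and $R_tr_i\neq\{0\}$ (the paper's printed definition restricts to $s\in E(S)$, but its own proof uses it for general $s$). And the "total collapse" you worry about --- that $xr_i\neq 0$ need not give $xr\neq 0$ when $R$ is merely a sum and the $ts_j$ may coincide --- is not resolved by delicate bookkeeping in the paper either; the choice of $x$ with $xr\neq 0$ is simply asserted as part of the non-degeneracy hypothesis. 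So your instinct about where the difficulty sits is sound, but as written your proposal stops short of the one-line argument that actually finishes the proof.
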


\begin{proof}
Take a non-zero ideal $I$ of $R$ and an 
$I$-minimal element $r$. Take $n \in \mathbb{N}$ such that $d(r)= n$. 
Choose $s_1,\ldots,s_n \in S$ and non-zero $r_i \in R_{s_i}$,
for $i = 1,\ldots,n$, such that $r = \sum_{i=1}^n r_i$.

Case 1: $R$ is left non-degenerate.
Choose $t \in S$, $i \in \{ 1 , \ldots , n \}$ and
$x \in R_t$ such that $t s_i \in E(S)$
and $x r \neq 0$. Put $r' = x r$. Then $d(r') \leq d(r)$
and thus $r'$ is $I$-minimal.
Take $w \in Z(R_0)$.
Since $r \in I$, we get that 
$wr' - r'w \in I$.
However $wr' - r'w = \sum_{j=1}^n ( w x r_j - x r_j w )=
\sum_{j=1, \ j \neq i}^n ( w x r_j - x r_j w)$
since $x r_i \in R_t R_{s_i} \subseteq R_{t s_i} \subseteq R_0$.
For each $j \in \{ 1,\ldots,n \}$ with $j \neq i$ it holds that
$w x r_j - x r_j w \in R_0 R_t R_{s_j} + R_t R_{s_j} R_0
\subseteq R_{t s_j}$ since $R$ is idempotent coherent.
Thus $d( w r' - r' w ) < n = d(r')$.
From $I$-minimality of $r'$ we conclude that $wr' = r'w$.
Hence $r' \in I \cap C_R ( Z(R_0) )$.

Case 2: $R$ is right non-degenerate.
Similar to the proof of Case 1 and is therefore left to the reader.
\end{proof}

\begin{defi}
Let $I$ be an ideal of $R$.
We say that $I$ is a system ideal if $I = \sum_{s \in S} I \cap R_s$.
We say that $R$ is system simple if $R$ and $\{ 0 \}$ are the only
system ideals of $R$. 
\end{defi}

\begin{rem}\label{simpleremark}
If $R$ is simple, then, clearly, $R$ is system simple.
\end{rem}

\begin{thm}\label{ZR0MAX}
If $R$ is idempotent coherent, system simple, left (or right) 
non\-degene\-rate and $C_R( Z(R_0) ) \subseteq R_0$, then $R$ is simple.
\end{thm}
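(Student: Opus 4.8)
The plan is to show that every nonzero ideal of $R$ must be all of $R$. The key insight is that the hypotheses combine in a natural chain: system simplicity controls \emph{system} ideals, while the ideal intersection property (Proposition \ref{propintersection}) and the centralizer hypothesis together promote an arbitrary nonzero ideal into one that meets $R_0$, which we can then leverage to force the ideal to be a system ideal, and hence everything.

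First I would take an arbitrary nonzero ideal $I$ of $R$ and aim to prove $I = R$. Since $R$ is idempotent coherent and left (or right) nondegenerate, Proposition \ref{propintersection} tells us that the extension $R / C_R(Z(R_0))$ has the ideal intersection property, so $I \cap C_R(Z(R_0)) \neq \{0\}$. Now I invoke the standing hypothesis $C_R(Z(R_0)) \subseteq R_0$, which immediately yields $I \cap C_R(Z(R_0)) \subseteq I \cap R_0$, and therefore $I \cap R_0 \neq \{0\}$. Thus $I$ carries a nonzero element lying in $R_0 = \sum_{e \in E(S)} R_e$.

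The crucial middle step is to pass from this nonzero intersection with $R_0$ to the conclusion that $I$ is itself a system ideal, i.e.\ that $I = \sum_{s \in S} (I \cap R_s)$. Once we know $I$ is a system ideal, system simplicity forces $I = R$ (since $I \neq \{0\}$), completing the proof. So the whole argument reduces to establishing that $I$ is a system ideal. The plan here is to consider the sum $J = \sum_{s \in S} (I \cap R_s)$; this is manifestly a system ideal contained in $I$, and by the preceding paragraph $J$ is nonzero because it contains the nonzero piece of $I \cap R_0$ decomposed along the idempotent degrees. I would then argue that $J$ is in fact an ideal of $R$ (using $R_s R_t \subseteq R_{st}$ and the fact that $I$ absorbs products, so that multiplying a homogeneous-in-$R_s$ element of $I$ by $R_t$ lands in $I \cap R_{st}$), so $J$ is a nonzero system ideal, whence $J = R$ by system simplicity, and consequently $I \supseteq J = R$, giving $I = R$.

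The main obstacle I anticipate is the passage showing that the nonzero intersection $I \cap R_0$ genuinely produces a nonzero \emph{system} ideal inside $I$ — in particular verifying that $J = \sum_{s} (I \cap R_s)$ is closed under multiplication by all of $R$ and not merely an additive subgroup. This requires care because $I$ is only assumed to be an ordinary ideal, so an element $r \in I \cap R_s$ multiplied by $x \in R_t$ gives $xr \in I$ (ideal) and $xr \in R_{ts}$ (system axiom), hence $xr \in I \cap R_{ts} \subseteq J$; the symmetric computation handles right multiplication, and additivity is clear. Provided this closure is checked cleanly, the rest follows from system simplicity. I would expect the write-up to be short, with the bulk of the conceptual work already packaged into Proposition \ref{propintersection} and the definition of system simplicity.
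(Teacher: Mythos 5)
Your first half matches the paper exactly: Proposition~\ref{propintersection} gives $I \cap C_R(Z(R_0)) \neq \{0\}$, and the hypothesis $C_R(Z(R_0)) \subseteq R_0$ places this nonzero additive group inside $I \cap R_0$. The genuine gap is in the second half, at the claim that $J = \sum_{s \in S}(I \cap R_s)$ is nonzero ``because it contains the nonzero piece of $I \cap R_0$ decomposed along the idempotent degrees.'' Recall that $R_0 = \sum_{e \in E(S)} R_e$ is itself a sum over all idempotents of $S$, so a nonzero $x \in I \cap R_0$ merely decomposes as $x = \sum_e x_e$ with $x_e \in R_e$, and nothing forces any individual component $x_e$ to lie in $I$ (the decomposition is not even unique, since $R$ is only a system, not a graded ring). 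Consequently you cannot conclude that $I \cap R_e \neq \{0\}$ for some $e \in E(S)$, let alone that $I \cap R_s \neq \{0\}$ for some $s \in S$; a priori the largest system ideal $\sum_s (I \cap R_s)$ contained in $I$ could be zero even though $I \cap R_0$ is not, and then your argument collapses. (Your step would be fine if $E(S)$ were a singleton, as in the group-graded case where $R_0$ is a single homogeneous component, but the theorem is stated for arbitrary semigroups.)

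The paper sidesteps this by going in the opposite direction: instead of cutting $I$ down to its homogeneous part, it takes the nonzero additive group $J = I \cap C_R(Z(R_0)) \subseteq R_0$ and builds outward, forming $K = RJR + J$. This $K$ is nonzero simply because it contains $J$ (no homogeneous decomposition of elements of $J$ is needed to see nonvanishing), it is contained in $I$ since $J \subseteq I$, and it is a system ideal; system simplicity then forces $K = R$, whence $I \supseteq K = R$. To repair your write-up you would need to replace $\sum_s(I\cap R_s)$ by such an externally generated system ideal, or else produce a nonzero homogeneous element of $I$ --- which the hypotheses of this theorem alone do not supply.
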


\begin{proof}
Let $I$ be a non-zero ideal of $R$.
From Proposition \ref{propintersection} it follows that 
the additive group $J = I \cap C_R( Z(R_0) )$ is non-zero.
From the assumption $C_R( Z(R_0) ) \subseteq R_0$ it follows that $J \subseteq R_0$.
Thus $K = R J R + J$ is a non-zero system ideal of $R$.
From system simplicity of $R$ it follows that $K = R$.
Thus $R = K = RJR + J \subseteq RIR + I = I$ and hence $R = I$.
\end{proof}

\begin{cor}\label{corcomm1}
If $R$ is idempotent coherent, left (or right) non-degenerate
and $R_0$ is maximally commutative in $R$, 
then $R$ is simple if and only if $R$ is system simple
\end{cor}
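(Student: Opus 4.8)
The plan is to prove the two implications separately, noting that the maximal commutativity hypothesis is precisely what is needed in order to invoke Theorem \ref{ZR0MAX}. The backward implication, that simplicity of $R$ implies system simplicity, requires no work beyond Remark \ref{simpleremark}, which records that every ideal of a simple ring is trivially a system ideal. So essentially all of the content lies in the forward implication.

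For the forward direction, I would verify that the hypotheses of Theorem \ref{ZR0MAX} are met under the assumption that $R$ is system simple. Three of those hypotheses --- idempotent coherence, one-sided non-degeneracy, and system simplicity --- are given directly. The only remaining condition to check is the inclusion $C_R(Z(R_0)) \subseteq R_0$, and this is exactly where the maximal commutativity assumption enters.

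The key observation is that maximal commutativity of $R_0$ forces $R_0$ to be commutative. Indeed, $C_R(R_0) = R_0$ in particular gives $R_0 \subseteq C_R(R_0)$, which says precisely that every element of $R_0$ commutes with every element of $R_0$. Consequently $Z(R_0) = C_{R_0}(R_0) = R_0$, and therefore $C_R(Z(R_0)) = C_R(R_0) = R_0$. In particular the required inclusion $C_R(Z(R_0)) \subseteq R_0$ holds, in fact with equality.

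With this inclusion in hand, all hypotheses of Theorem \ref{ZR0MAX} are satisfied, and that theorem yields the simplicity of $R$, completing the forward direction. I do not anticipate any serious obstacle here: the argument is essentially a matter of unwinding the definition of maximal commutativity to identify $Z(R_0)$ with $R_0$, after which the corollary follows from Theorem \ref{ZR0MAX} together with Remark \ref{simpleremark}. The one point worth stating carefully is that maximal commutativity is genuinely stronger than merely requiring $C_R(Z(R_0)) \subseteq R_0$, so this corollary is a clean specialization of the theorem rather than an equivalent reformulation of it.
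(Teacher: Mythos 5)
Your proposal is correct and follows exactly the paper's route: the backward direction is Remark \ref{simpleremark}, and the forward direction is Theorem \ref{ZR0MAX}, with the maximal commutativity hypothesis unwound (correctly) to give $Z(R_0)=R_0$ and hence $C_R(Z(R_0))=C_R(R_0)=R_0$. The paper states this in one line; your write-up just supplies the details it leaves implicit.
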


\begin{proof}
This follows from Remark \ref{simpleremark} and Theorem \ref{ZR0MAX}.
\end{proof}

\section{Epsilon-strong systems}\label{epsilonstrongsystems}

At the end of this section, we prove Theorem \ref{maintheorem}
(see Theorem \ref{newmaintheorem}).
We introduce left and right unital (s-unital)
epsilon-strong systems (see Definition \ref{defunitalepsilon})
and obtain some characterizations of these objects
(see Proposition \ref{epsilonequivalent} and Corollary \ref{epsiloncor}).
Throughout this section, $S$ denotes an {\it inverse semigroup}.
Recall that this means that there for all
$s \in S$ exists a unique $t \in S$ such that $sts = s$ and $tst = t$.
We will use the standard notation and put $s^* = t$. 
There is a partial order $\leq$ on $S$ defined by
saying that if $s,t \in S$, then $s \leq t$ if $s = t s^* s$.
For the rest of the section, $R$ denotes a system.
It is easy to see that for all $s \in S$, $R_s R_{s^*}$ is a ring
and $R_s$ is an $R_s R_{s^*}$-$R_{s^*} R_s$-bimodule.

\begin{defi}\label{defunitalepsilon}
Let $\mathcal{P}$ denote either ''unital'' or ''s-unital'' or ''unitary''.
We say that $R$ is {\it left (right)} $\mathcal{P}$ {\it epsilon-strong} if for all $s \in S$
the left $R_s R_{s^*}$-module (right $R_{s^*} R_s$-module) $R_s$ is $\mathcal{P}$.
If $R$ is both left and right $\mathcal{P}$ epsilon-strong,
then we say that $R$ is $\mathcal{P}$ epsilon-strong.
\end{defi}

\begin{rem}
Note that $R$ is left (or right) unitary epsilon-strong if and only if
$R$ is symmetric in the sense of \cite[Definition 4.5]{CEP2016}, that is
if for all $s \in S$, the equality $R_s R_{s^*} R_s = R_s$ holds.
\end{rem}

\begin{prop}\label{epsilonequivalent}
If $\mathcal{P}$ denotes either ''unital'' or ''s-unital'',
then (i), (ii) and (iii) below are equivalent.
\begin{itemize}

\item[(i)] $R$ is left (right) $\mathcal{P}$ epsilon-strong.

\item[(ii)] $R$ is symmetric and for all $s \in S$, the ring
$R_s R_{s^*}$ is left (right) $\mathcal{P}$.

\item[(iii)] 
\begin{itemize}

\item[$\bullet$] $\mathcal{P}$ = unital:
for all $s \in S$, there exists $\epsilon_s \in R_s R_{s^*}$
($\epsilon_s' \in R_{s^*} R_s$) such that for all $r \in R_s$,
$\epsilon_s r = r$ ($r \epsilon_s' = r$).

\item[$\bullet$] $\mathcal{P}$ = s-unital:
for all $s \in S$ and all $r \in R_s$, there exists 
$\epsilon_s \in R_s R_{s^*}$ ($\epsilon_s' \in R_{s^*} R_s$)
with $\epsilon_s r = r$ ($r \epsilon_s' = r$).

\end{itemize}
\end{itemize}
\end{prop}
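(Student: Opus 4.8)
I want to prove the equivalence of three conditions characterizing left (right) $\mathcal{P}$ epsilon-strong systems, where $\mathcal{P}$ is either ``unital'' or ``s-unital.'' Since the statement is symmetric in left/right and the two sides are proved identically, I would carry out the argument for the left-module version and remark that the right version follows by the same reasoning. I expect the cleanest route is to establish the cycle of implications (i) $\Rightarrow$ (ii) $\Rightarrow$ (iii) $\Rightarrow$ (i), although as I sketch below, (i) and (iii) are so close that one of these arrows is essentially a definitional unwinding.

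\smallskip
\textbf{Step 1: (i) $\Rightarrow$ (ii).} Assume $R_s$ is a left $\mathcal{P}$ module over $A_s := R_s R_{s^*}$ for every $s \in S$. First I would extract symmetry. For a left $\mathcal{P}$ module the key consequence is left-unitarity, i.e. $A_s R_s = R_s$: in the unital case an identity $\epsilon_s \in A_s$ gives $r = \epsilon_s r \in A_s R_s$ for every $r \in R_s$, and in the s-unital case the local identities do the same pointwise (invoking Definition \ref{unitary} and the fact that s-unital implies unitary). Thus $R_s R_{s^*} R_s = A_s R_s = R_s$, which by the preceding remark says exactly that $R$ is symmetric. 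It then remains to see that $A_s = R_s R_{s^*}$ is itself a left $\mathcal{P}$ ring. This is where I would use that $s^* \in S$ as well: symmetry applied to $s^*$ gives $R_{s^*} R_s R_{s^*} = R_{s^*}$, and combining the left $\mathcal{P}$ structures on $R_s$ and on $R_{s^*}$ should furnish a (local) left identity for $A_s$ acting on itself. Concretely, a left identity $\epsilon_s$ for the module $R_s$ lies in $A_s$ and satisfies $\epsilon_s (ab) = (\epsilon_s a) b = a b$ for $a \in R_s$, $b \in R_{s^*}$, so it is a left identity for all of $A_s = R_s R_{s^*}$; the s-unital case is the pointwise analogue via Proposition \ref{tominaga}(a).

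\smallskip
\textbf{Step 2: (ii) $\Rightarrow$ (iii) and (iii) $\Rightarrow$ (i).} Assuming (ii), symmetry gives $R_s = R_s R_{s^*} R_s = A_s R_s$, so every $r \in R_s$ is a finite sum $\sum_j a_j r_j$ with $a_j \in A_s$; since $A_s$ is left $\mathcal{P}$, Proposition \ref{tominaga}(a) supplies a single $\epsilon_s \in A_s = R_s R_{s^*}$ with $\epsilon_s a_j = a_j$ for all $j$ (globally in the unital case, for the given finite list in the s-unital case), whence $\epsilon_s r = \sum_j \epsilon_s a_j r_j = \sum_j a_j r_j = r$. This is precisely condition (iii). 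Finally (iii) $\Rightarrow$ (i) is immediate: the existence of $\epsilon_s \in R_s R_{s^*}$ with $\epsilon_s r = r$ (for all $r$, resp.\ for each $r$) is by definition the assertion that the left $A_s$-module $R_s$ is unital (resp.\ s-unital).

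\smallskip
\textbf{The main obstacle.} The routine direction (iii) $\Rightarrow$ (i) and the module-to-module part of Step 1 are essentially bookkeeping. The genuine content, and the step I expect to require the most care, is the transfer in Step 1 from ``$R_s$ is a left $\mathcal{P}$ \emph{module}'' to ``$R_s R_{s^*}$ is a left $\mathcal{P}$ \emph{ring},'' i.e.\ verifying that a left identity (or local left identities) for the module action on $R_s$ genuinely serves as a left identity for the ring $A_s$ acting on itself. This hinges on the identity $A_s = R_s R_{s^*}$ together with symmetry, and in the s-unital case on passing from pointwise identities on generators of $A_s$ to identities on arbitrary elements, where Proposition \ref{tominaga}(a) is the essential tool. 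I would be careful to apply symmetry at both $s$ and $s^*$, since that is what makes $A_s$ ``large enough'' to absorb its own multiplication.
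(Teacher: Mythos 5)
Your proof is correct and follows essentially the same route as the paper: the cycle (i)$\Rightarrow$(ii)$\Rightarrow$(iii)$\Rightarrow$(i), with symmetry extracted from left-unitarity of the module $R_s$, the (local) left identity for the module transferred to the ring $R_s R_{s^*}$ via $\epsilon_s(ab)=(\epsilon_s a)b$, and Proposition \ref{tominaga}(a) used in the s-unital case to handle finite sums. The only superfluous element is your appeal to symmetry at $s^*$ in Step 1, which your own concrete argument never actually uses.
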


\begin{proof}
We only show the ''left'' parts of the proof. The ''right'' parts 
are shown in an analogous way and is therefore left to the reader.

$\bullet$ $\mathcal{P}$ = unital:

(i)$\Rightarrow$(ii): 
Take $s \in S$. Since $R_s$ is a unital left
$R_s R_{s^*}$-module it follows immediately that $(R_s R_{s^*})R_s = R_s$.
Thus $R$ is symmetric.
Also since $R_s$ is a unital left $R_s R_{s^*}$-module
it follows that the ring $R_s R_{s^*}$ is left unital.

(ii)$\Rightarrow$(iii):
Take $s \in S$ and let $\epsilon_s$ denote a left unit 
for the ring $R_s R_{s^*}$. Take $r \in R_s$.
Since $R$ is symmetric there exists $n \in \mathbb{N}$
and $a_i,c_i \in R_s$ and $b_i \in R_{s^*}$,
for $i = 1 , \ldots , n$, such that $r = \sum_{i=1}^n a_i b_i c_i$.
Since $a_i b_i \in R_s R_{s^*}$ it follows that 
$\epsilon_s a_i b_i = a_i b_i$ and thus 
$\epsilon_s r = \sum_{i=1}^n \epsilon_s a_i b_i c_i = 
\sum_{i=1}^n a_i b_i c_i = r$.

(iii)$\Rightarrow$(i): Immediate.

$\bullet$ $\mathcal{P}$ = s-unital:

(i)$\Rightarrow$(ii):
Take $s \in S$. The symmetric part follows as in the unital case.
To deduce that the ring $R_s R_{s^*}$ is left s-unital
we use Proposition \ref{tominaga}. 

(ii)$\Rightarrow$(iii):
Take $s \in S$ and $r \in R_s$.
Since $R$ is symmetric there exists $n \in \mathbb{N}$
and $a_i,c_i \in R_s$ and $b_i \in R_{s^*}$,
for $i = 1 , \ldots , n$, such that $r = \sum_{i=1}^n a_i b_i c_i$.
Since $a_i b_i \in R_s R_{s^*}$ it follows, from Proposition \ref{tominaga}, 
that there is $\epsilon_s \in R_s R_{s^*}$ such that
$\epsilon_s a_i b_i = a_i b_i$, for $i=1,\ldots,n$. 
Thus $\epsilon_s r = \sum_{i=1}^n \epsilon_s a_i b_i c_i = 
\sum_{i=1}^n a_i b_i c_i = r$.

(iii)$\Rightarrow$(i): Immediate.
\end{proof}

\begin{cor}\label{epsiloncor}
If $\mathcal{P}$ denotes either ''unital'' or ''s-unital'',
then (i), (ii) and (iii) below are equivalent.
\begin{itemize}

\item[(i)] $R$ is $\mathcal{P}$ epsilon-strong.

\item[(ii)] $R$ is symmetric and for all $s \in S$, the ring
$R_s R_{s^*}$ is $\mathcal{P}$.

\item[(iii)] 
\begin{itemize}

\item[$\bullet$] $\mathcal{P}$ = unital:
for all $s \in S$, there exists $\epsilon_s \in R_s R_{s^*}$
such that for all $r \in R_s$,
$\epsilon_s r = r \epsilon_{s^*} = r$.

\item[$\bullet$] $\mathcal{P}$ = s-unital:
for all $s \in S$ and all $r \in R_s$, there exists 
$\epsilon_s \in R_s R_{s^*}$ and $\epsilon_s' \in R_{s^*} R_s$
with $\epsilon_s r = r \epsilon_s' = r$.

\end{itemize}
\end{itemize}
\end{cor}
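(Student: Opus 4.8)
The plan is to deduce the corollary from Proposition \ref{epsilonequivalent} by conjoining its ``left'' and ``right'' instances, using that, by Definition \ref{defunitalepsilon}, $R$ is $\mathcal{P}$ epsilon-strong exactly when it is both left and right $\mathcal{P}$ epsilon-strong. For a fixed $s \in S$ all the computations take place inside the two rings $R_s R_{s^*}$ and $R_{s^*} R_s$ and the bimodule $R_s$.

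For (i) $\Leftrightarrow$ (ii) I would simply overlay the two readings of Proposition \ref{epsilonequivalent}. Its left instance gives that left $\mathcal{P}$ epsilon-strongness is equivalent to $R$ being symmetric together with $R_s R_{s^*}$ being left $\mathcal{P}$ for every $s$, and its right instance gives that right $\mathcal{P}$ epsilon-strongness is equivalent to $R$ being symmetric together with $R_s R_{s^*}$ being right $\mathcal{P}$ for every $s$. Taking both at once yields that (i) holds if and only if $R$ is symmetric and, for every $s$, the ring $R_s R_{s^*}$ is simultaneously left and right $\mathcal{P}$, which is precisely (ii); for $\mathcal{P} = $ unital the passage from a left and a right identity to a single two-sided one is supplied by Remark \ref{twosided}, and for $\mathcal{P} = $ s-unital by Proposition \ref{tominaga}.

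For (i) $\Leftrightarrow$ (iii) the implication (iii) $\Rightarrow$ (i) and the s-unital half of (i) $\Rightarrow$ (iii) are immediate: in both the element $\epsilon_s \in R_s R_{s^*}$ is exactly the left (s-)unit from condition (iii) of Proposition \ref{epsilonequivalent} read on the left, and $\epsilon_s' \in R_{s^*} R_s$ the right (s-)unit from condition (iii) read on the right, so nothing needs to be rearranged. The unital half of (i) $\Rightarrow$ (iii) is the only place carrying genuine content, and it is where I expect the (modest) main obstacle: I must show that the distinguished element attached to $s^*$ can play the role of the \emph{right} identity for $R_s$, so that a single family $(\epsilon_s)_{s \in S}$ with $\epsilon_s \in R_s R_{s^*}$ serves both purposes.

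To carry this out I would, from the left instance of Proposition \ref{epsilonequivalent}(iii) applied to the indices $s$ and $s^*$, obtain $\epsilon_s \in R_s R_{s^*}$ with $\epsilon_s r = r$ for all $r \in R_s$ and $\epsilon_{s^*} \in R_{s^*} R_s$ with $\epsilon_{s^*} b = b$ for all $b \in R_{s^*}$, and from the right instance an element $\eta_s \in R_{s^*} R_s$ with $r \eta_s = r$ for all $r \in R_s$. Working in the ring $T = R_{s^*} R_s$, I would verify on a generator $bc$ (with $b \in R_{s^*}$, $c \in R_s$) that $\epsilon_{s^*}(bc) = (\epsilon_{s^*} b) c = bc$, so $\epsilon_{s^*}$ is a left identity of $T$, and that $(bc) \eta_s = b(c \eta_s) = bc$, so $\eta_s$ is a right identity of $T$. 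A left identity and a right identity of a ring coincide, hence $\epsilon_{s^*} = \epsilon_{s^*} \eta_s = \eta_s$, and therefore $r \epsilon_{s^*} = r \eta_s = r$ for every $r \in R_s$. Combined with $\epsilon_s r = r$ this is exactly the unital form of (iii). This identification of one-sided identities inside $R_{s^*} R_s$ is the analogue of the mechanism recorded in Remark \ref{twosided}, and is the single step I would take most care over.
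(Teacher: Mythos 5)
Your proof is correct and follows essentially the same route as the paper: everything is read off from the left and right instances of Proposition \ref{epsilonequivalent}, with the only substantive point being that in the unital case the right identity for $R_s$ can be taken to be $\epsilon_{s^*}$, which both you and the paper settle by identifying one-sided identities inside the ring $R_{s^*}R_s$ (the paper phrases this as uniqueness of the multiplicative identity of $R_{s^*}R_s$; your explicit computation $\epsilon_{s^*}=\epsilon_{s^*}\eta_s=\eta_s$ is the same mechanism spelled out). No gaps.
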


\begin{proof}
The case $\mathcal{P}$ = unital follows from Proposition \ref{epsilonequivalent}
if we note that the ring $R_s R_{s^*}$ is unital and hence has a unique
multiplicative identity $\epsilon_s$. Then the unique multiplicative identity
of $R_{s^*} R_s$ is $\epsilon_{s^*}$.
The case $\mathcal{P}$ = s-unital follows immediately from Proposition \ref{epsilonequivalent}.
\end{proof}

\begin{defi}
We say that $R$ is {\it coherent} if for all $s,t \in S$
with $s \leq t$, the inclusion $R_s \subseteq R_t$ holds. 
In that case $R$ is idempotent coherent, since for all $e \in E(S)$
and all $s \in S$ we have that $es \leq s$ and $se \leq s$
(see \cite[Section 2]{beuter2017}), 
and thus $R_e R_s \subseteq R_{es} \subseteq R_s$ and
$R_s R_e \subseteq R_{se} \subseteq R_s$.
\end{defi}

\begin{prop}\label{minimalprop}
If $R$ is coherent and left (right) $s$-unital epsilon-strong,
then $R$ is left (right) non-degenerate and
$R / C_R( Z(R_0) )$ has the ideal intersection property.
\end{prop}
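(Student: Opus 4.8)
The plan is to reduce the entire statement to Proposition \ref{propintersection}. Since $R$ is coherent, it is in particular idempotent coherent (as recorded in the definition of coherence), so among the hypotheses of Proposition \ref{propintersection} the only one that still needs checking is left (right) non-degeneracy. Once non-degeneracy is in hand, the ideal intersection property for $R / C_R(Z(R_0))$ follows at once by invoking that proposition. Hence the substance of the argument is to show that a coherent left (right) $s$-unital epsilon-strong system is left (right) non-degenerate, and in fact the coherence is not even needed for this step, only the s-unital epsilon-strong hypothesis.

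I will carry out the left case in detail. Fix $s \in E(S)$ and a non-zero $r \in R_s$. Because $R$ is left $s$-unital epsilon-strong, the left $R_s R_{s^*}$-module $R_s$ is s-unital, so there exists $\epsilon_s \in R_s R_{s^*}$ with $\epsilon_s r = r$ (this is condition (iii) of Proposition \ref{epsilonequivalent} for $\mathcal{P}$ = s-unital). Writing $\epsilon_s = \sum_{j=1}^n a_j b_j$ with $a_j \in R_s$ and $b_j \in R_{s^*}$, we obtain $r = \epsilon_s r = \sum_{j=1}^n a_j (b_j r)$. Were every $b_j r$ equal to $0$, this would force $r = 0$, contrary to assumption; hence $b_j r \neq 0$ for some $j$, and in particular $R_{s^*} r \neq \{0\}$. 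Now put $t = s^*$. Then $ts = s^* s \in E(S)$, since $s^* s$ is always idempotent in an inverse semigroup (indeed $(s^* s)^2 = s^*(s s^* s) = s^* s$), and $R_t r = R_{s^*} r \neq \{0\}$ by the previous step. This is exactly what the definition of left non-degeneracy requires, so $R$ is left non-degenerate.

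The right case is entirely symmetric: the right $s$-unital epsilon-strong hypothesis yields $\epsilon_s' \in R_{s^*} R_s$ with $r \epsilon_s' = r$, whence $r R_{s^*} \neq \{0\}$ by the same elimination argument, and taking $t = s^*$ gives $st = s s^* \in E(S)$ with $r R_t \neq \{0\}$. Having established left (right) non-degeneracy, I conclude directly from Proposition \ref{propintersection} that $R / C_R(Z(R_0))$ has the ideal intersection property, which finishes the proof. I do not anticipate any serious obstacle: the only points demanding a little care are the passage from $\epsilon_s r = r \neq 0$ to $R_{s^*} r \neq \{0\}$ (handled by expanding $\epsilon_s$ as a finite sum) and the standard fact that $s^* s$ and $s s^*$ are idempotents, both of which are routine.
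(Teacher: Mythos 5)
Your proof is correct and rests on the same key observation as the paper's: from $\epsilon_s \in R_sR_{s^*}$ with $\epsilon_s r = r \neq 0$ one extracts an element of $R_{s^*}$ that does not annihilate $r$. The organization differs, though. The paper never verifies the definition of non-degeneracy for a single homogeneous element; instead it takes an $I$-minimal element $r=\sum_{i=1}^n r_i$ of an arbitrary non-zero ideal, uses $d(r-\epsilon_{s_1}r)<n$ together with idempotent coherence to force $r=\epsilon_{s_1}r$, and thereby directly manufactures the datum ($t=s_1^*$, $x=z$ with $ts_1\in E(S)$ and $xr\neq 0$) that the proof of Proposition \ref{propintersection} opens with. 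You instead verify the definition of non-degeneracy head-on (correctly noting that coherence is not needed for that step) and then cite Proposition \ref{propintersection} as a black box; this is cleaner and does more to justify the first assertion of the proposition than the paper's own argument does. One caveat: as printed, the definition of left non-degeneracy quantifies only over $s\in E(S)$, whereas the proof of Proposition \ref{propintersection} applies it to the arbitrary components $s_i\in S$ of an $I$-minimal element. Your argument works verbatim for every $s\in S$ (nothing in it uses idempotency of $s$), so you should state it at that level of generality --- as the groupoid-graded definition in Section \ref{gradedrings} does --- for the black-box reduction to be airtight; the paper sidesteps this issue by re-running the minimality argument instead of quoting the definition.
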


\begin{proof}
We only show the ''left'' part of the proof. 
The ''right'' part can be shown in a similar way and is therefore left to the reader.
Take an $I$-minimal element $r$ and put $d(r)=n$.
Take $s_1,\ldots,s_n$ and non-zero $r_i \in R_{s_i}$, for $i = 1 , \ldots , n$,
with $r = \sum_{i=1}^n r_i$.
From Proposition \ref{epsilonequivalent} it follows that there exists
$\epsilon_{s_1} \in R_{s_1} R_{s_1^*}$ such that 
$\epsilon_{s_1} r_1 = r_1$.
Then $I \ni r - \epsilon_{s_1} r = \sum_{i=2}^n (r_i - \epsilon_{s_1} r_i)$.
Since $R$ is idempotent coherent and $s_1 s_1^* \in E(S)$ it follows that
$r_i - \epsilon_{s_1} r_i \in R_{s_i}$, for $i = 1,\ldots,n$.
Thus $d( r - \epsilon_{s_1} r ) < n$. From $I$-minimality
it follows that  $r = \epsilon_{s_1} r$.
Since $\epsilon_{s_1} \in R_{s_1} R_{s_1^*}$ it follows in particular
that there is $z \in R_{s_1^*}$ with $z r$ non-zero.
From Proposition \ref{propintersection} it follows that $R / C_R( Z(R_0) )$ has the ideal intersection property.
\end{proof}

Now we can state and prove the main result of this section
(which in Section \ref{introduction} was named Theorem \ref{maintheorem}).

\begin{thm}\label{newmaintheorem}
If $R$ is a system simple cohe\-rent left (or right) s-unital epsilon-strong system
and $C_R( Z(R_0) ) \subseteq R_0$, then $R$ is simple.
\end{thm}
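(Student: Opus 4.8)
The plan is to recognize that this statement is essentially the assembly of two results already established in the excerpt, namely Theorem \ref{ZR0MAX} and Proposition \ref{minimalprop}, so the proof reduces to verifying that the hypotheses of Theorem \ref{ZR0MAX} are all in force. The strategy is therefore to check each of those four hypotheses in turn.

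First, I would note that since $R$ is coherent, it is automatically idempotent coherent: this implication is recorded directly in the definition of \emph{coherent} (immediately following Corollary \ref{epsiloncor}), where one observes that for every $e \in E(S)$ and every $s \in S$ we have $es \leq s$ and $se \leq s$, whence $R_e R_s \subseteq R_{es} \subseteq R_s$ and $R_s R_e \subseteq R_{se} \subseteq R_s$. Next, I would invoke Proposition \ref{minimalprop}: because $R$ is coherent and left (or right) $s$-unital epsilon-strong, that proposition yields that $R$ is left (or right) non-degenerate. The remaining two hypotheses of Theorem \ref{ZR0MAX}, namely that $R$ is system simple and that $C_R(Z(R_0)) \subseteq R_0$, are part of the standing assumptions of the present statement.

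With idempotent coherence, system simplicity, left (or right) non-degeneracy, and the containment $C_R(Z(R_0)) \subseteq R_0$ all verified, I would simply apply Theorem \ref{ZR0MAX} to conclude that $R$ is simple. Since the ``left'' and ``right'' cases of Proposition \ref{minimalprop} feed directly into the corresponding ``left'' and ``right'' cases of Theorem \ref{ZR0MAX}, no separate argument is needed for the two alternatives, and the parenthetical ``(or right)'' is handled uniformly.

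In this sense there is no genuine obstacle here: the substantive work has already been done in Proposition \ref{propintersection} (the ideal intersection property via minimal-degree elements) and in establishing non-degeneracy from the $s$-unital epsilon-strong structure in Proposition \ref{minimalprop}. The only point requiring a moment's care is to ensure that the hypothesis ``coherent'' genuinely supplies ``idempotent coherent'' so that Theorem \ref{ZR0MAX} applies verbatim; this is immediate from the definition, so the theorem follows as a direct corollary of the earlier machinery.
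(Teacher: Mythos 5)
Your proposal is correct and follows exactly the paper's own argument: the paper proves Theorem \ref{newmaintheorem} by combining Theorem \ref{ZR0MAX} with Proposition \ref{minimalprop}, and your careful verification of the remaining hypotheses (coherent implies idempotent coherent, and the standing assumptions supplying system simplicity and the centralizer containment) just makes explicit what the paper leaves implicit.
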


\begin{proof}
This follows from Theorem \ref{ZR0MAX} and Proposition \ref{minimalprop}.
\end{proof}

\begin{cor}\label{corrcomm2}
If $R$ is a coherent s-unital epsilon-strong system and $R_0$ is maximally commutative in $R$,
then $R$ is simple if and only if $R$ is system simple.
\end{cor}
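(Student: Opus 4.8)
The plan is to prove the two implications separately, relying on the results already established in this section. One direction is immediate: if $R$ is simple, then Remark \ref{simpleremark} gives at once that $R$ is system simple, and there is nothing further to check.

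For the converse, suppose that $R$ is system simple. I would like to invoke Theorem \ref{newmaintheorem}, whose hypotheses are that $R$ be a system simple coherent left (or right) s-unital epsilon-strong system satisfying $C_R(Z(R_0)) \subseteq R_0$. Every one of these is granted directly except the centralizer inclusion, so the only genuine step is to extract that inclusion from the maximal commutativity of $R_0$. The key observation is that $C_R(R_0) = R_0$ entails $R_0 \subseteq C_R(R_0)$, which says precisely that every element of $R_0$ commutes with every element of $R_0$; in other words, $R_0$ is commutative. Consequently $Z(R_0) = R_0$, and therefore $C_R(Z(R_0)) = C_R(R_0) = R_0 \subseteq R_0$. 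With this inclusion in hand, Theorem \ref{newmaintheorem} applies and yields that $R$ is simple.

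I do not anticipate any real obstacle here: the statement is essentially a repackaging of Theorem \ref{newmaintheorem} together with the elementary fact that a maximally commutative subring is, a fortiori, commutative, so that its center coincides with the ring itself. The non-degeneracy required by Theorem \ref{newmaintheorem} is already furnished internally via Proposition \ref{minimalprop}, so nothing extra need be verified here. Note also that this corollary runs parallel to Corollary \ref{corcomm1}, the only difference being that coherence together with the s-unital epsilon-strong property now plays the role that the direct non-degeneracy assumption played there.
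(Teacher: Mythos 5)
Your proof is correct and follows the same route as the paper: one direction is Remark \ref{simpleremark}, and the other is Theorem \ref{newmaintheorem} once one notes that maximal commutativity of $R_0$ forces $Z(R_0)=R_0$ and hence $C_R(Z(R_0))=C_R(R_0)=R_0$. The paper states this deduction without spelling out the centralizer computation, which you have correctly supplied.
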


\begin{proof}
This follows from Remark \ref{simpleremark} and Theorem \ref{newmaintheorem}.
\end{proof}

\section{Skew inverse semigroup rings}\label{partialskewsection}

In this section, we recall the definition of
skew inverse semigroup rings and we state some well known
facts concerning them that we need in the sequel.
We determine when such rings are left (or right) epsilon-strong systems
(see Proposition \ref{associative}).
At the end of this section, we prove Theorem \ref{secondmaintheorem}
(see Theorem \ref{newsecondmaintheorem}). 
Throughout this section $A$ denotes an associative, 
but not necessarily unital, ring,
$S$ is an inverse semigroup and
$\pi$ is a {\it partial action} of $S$ on $A$.
Recall that the latter means that there is a set
$\{ D_s \}_{s \in S}$ of ideals of $A$ and a set
$\{ \pi_s : D_{s^*} \rightarrow D_s \}_{s \in S}$
of ring isomorphisms satisfying the following
assertions for all $s,t \in S$:
\begin{itemize}

\item[(i)] $A = \sum_{s \in S} D_s$;

\item[(ii)] $\pi_s( D_{s^*} \cap D_t ) = D_s \cap D_{st}$;

\item[(iii)] for all $x \in D_{t^*} \cap D_{t^* s^*}$
the equality $\pi_s ( \pi_t (x) ) = \pi_{st}(x)$ holds.

\end{itemize}
We say that $\pi$ is {\it unital (locally unital, left s-unital, right s-unital)}
if for every $s \in S$ the ring $D_s$ is unital (locally unital,
left s-unital, right s-unital).
Recall that an ideal $J$ of $A$ is called {\it $S$-invariant} if
for all $s \in S$ the inclusion $\pi_s ( J \cap D_{s^*} ) \subseteq J$ holds.
The ring $A$ is called {\it $S$-simple} if $\{ 0 \}$ and $A$ are the 
only $S$-invariant ideals of $A$.
Note that if $\pi$ is a left (right) s-unital partial action of $S$ on $A$,
then for all $s \in S$ and all ideals $J$ of $A$ the equality
$J \cap D_s = D_s J$ ($J \cap D_s = J D_s$) holds.
Now we will recall the definition of the skew inverse semigroup ring 
$A \rtimes_{\pi} S$ defined by $\pi$.
Let $L_{\pi}$ be the set of formal finite sums of elements of the form $a_s \delta_s$,
for $s \in S$ and $a_s \in D_s$. We equip $L_{\pi}$ with 
component-wise addition and with a multiplication defined by the
additive extension of the relations
$( a_s \delta_s ) ( b_t \delta_t ) = 
\pi_s ( \pi_{s^*} (a_s) b_t ) \delta_{st},$
for $s,t \in S$, $a_s \in D_s$ and $b_t \in D_t$.
Let $I$ be the ideal of $L_{\pi}$ generated by all elements of
the form $a \delta_r - a \delta_s$, for $r,s \in S$
with $r \leq s$ and $a \in D_r$.
The {\it skew inverse semigroup ring} $A \rtimes_{\pi} S$ is defined 
to be the quotient $L_{\pi} / I$.
It is clear that $L_{\pi}$ is a graded ring and that $A \rtimes_{\pi} S$ is a system.
Note that the product on $L_{\pi}$ is not in general associative.
However, as we shall soon see, in the s-unital case, this is indeed so.

\begin{prop}
The ring $L_{\pi}$, and hence also the ring $A \rtimes_{\pi} S$, is a coherent system.
\end{prop}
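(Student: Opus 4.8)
The plan is to verify the two defining properties of a coherent system directly from the construction of $L_\pi$ and $A \rtimes_\pi S$. Recall that a system requires $R = \sum_{s} R_s$ together with $R_s R_t \subseteq R_{st}$, and coherence requires $R_s \subseteq R_t$ whenever $s \leq t$. I would first set up notation: for $L_\pi$, put $(L_\pi)_s = D_s \delta_s$, and for $A \rtimes_\pi S = L_\pi / I$, put $(A \rtimes_\pi S)_s = \overline{D_s \delta_s}$, the image of $D_s \delta_s$ under the quotient map. The excerpt already asserts that $L_\pi$ is graded and $A \rtimes_\pi S$ is a system, so strictly I only need to establish coherence, but I would state both defining inclusions for completeness since the proposition claims both rings are \emph{coherent systems}.

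First I would handle the multiplicative inclusion. For $a_s \delta_s \in D_s \delta_s$ and $b_t \delta_t \in D_t \delta_t$, the defining product is $(a_s \delta_s)(b_t \delta_t) = \pi_s(\pi_{s^*}(a_s) b_t)\, \delta_{st}$. The key point is that this lands in $D_{st} \delta_{st}$: since $\pi_{s^*}(a_s) \in D_{s^*}$ and $b_t \in D_t$, the product $\pi_{s^*}(a_s) b_t$ lies in $D_{s^*} \cap D_t$ (using that the $D_s$ are ideals), and then property (ii) of the partial action, $\pi_s(D_{s^*} \cap D_t) = D_s \cap D_{st} \subseteq D_{st}$, guarantees the result lies in $D_{st}$. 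Hence $(L_\pi)_s (L_\pi)_t \subseteq (L_\pi)_{st}$, and this inclusion passes to the quotient $A \rtimes_\pi S$.

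The main work is the coherence inclusion, and this is where the quotient by $I$ is essential. Suppose $r \leq s$ in $S$. For $L_\pi$ itself, coherence in the literal sense $D_r \delta_r \subseteq D_s \delta_s$ does \emph{not} hold, since $L_\pi$ is graded and the summands are independent; rather, coherence for $L_\pi$ must be understood in the quotient-compatible sense, or the claim for $L_\pi$ should be read as the formal relation that gets collapsed. The genuine coherence lives in $A \rtimes_\pi S$: by definition $I$ is generated by the elements $a\delta_r - a\delta_s$ for $r \leq s$ and $a \in D_r$, so modulo $I$ we have $\overline{a \delta_r} = \overline{a \delta_s}$. Thus for any $a \in D_r$, the class $\overline{a\delta_r} \in \overline{D_r \delta_r}$ equals $\overline{a \delta_s} \in \overline{D_s \delta_s}$ (this uses $D_r \subseteq D_s$, which follows from $r \leq s$ via property (ii)). Therefore $\overline{D_r \delta_r} \subseteq \overline{D_s \delta_s}$, i.e. $(A \rtimes_\pi S)_r \subseteq (A \rtimes_\pi S)_s$, establishing coherence.

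The step I expect to be the genuine obstacle, and which I would treat carefully, is the clean verification that $D_r \subseteq D_s$ whenever $r \leq s$, since the coherence inclusion relies on it. I would derive this from $r = s r^* r$ together with partial-action axiom (ii): writing $e = r^* r \in E(S)$, one checks $D_r = D_{s e} \supseteq$ (or equals, in the unital/idempotent-structured setting) the appropriate intersection $D_s \cap D_{se}$, so that $D_r \subseteq D_s$. I would lean on the cited computation in \cite[Section 2]{beuter2017} that $se \leq s$ and the standard inverse-semigroup identities to make this rigorous. Once $D_r \subseteq D_s$ is in hand, every element $a\delta_r$ with $a \in D_r$ is also a legitimate element $a\delta_s$ with $a \in D_s$, and the generator $a\delta_r - a\delta_s$ of $I$ does the rest; the remaining verifications are the routine checks already flagged in the excerpt.
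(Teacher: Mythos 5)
Your proof is correct and follows essentially the same route as the paper: both reduce coherence to the inclusion $D_s \subseteq D_t$ for $s \leq t$ (which the paper, like you, ultimately takes from \cite[Proposition 2.2]{beuter2017}) and then pass to the images $\overline{D_s\delta_s} \subseteq \overline{D_t\delta_t}$ in the quotient. You are in fact more careful than the paper in two respects --- you check the system axiom explicitly and you observe that literal coherence cannot hold in the graded ring $L_\pi$ itself and only appears after quotienting by the relations $a\delta_r - a\delta_s$ --- though your sketched first-principles derivation of $D_r \subseteq D_s$ via $D_{se} \supseteq D_s \cap D_{se}$ points the inclusion the wrong way and should simply be replaced by the citation, exactly as the paper does.
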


\begin{proof}
Take $s,t \in S$ with $s \leq t$.
From \cite[Proposition 2.2]{beuter2017} it follows
that $D_s \subseteq D_t$. Thus $D_s \delta_s \subseteq D_t \delta_t$
and, hence $R_s = \overline{D_s \delta_s} \subseteq \overline{D_t \delta_t} = R_t$.  
\end{proof}

\begin{prop}\label{DsDs}
Put $R = L_{\pi}$ and take $s \in S$. The equality 
$R_s R_{s^*} = D_s D_s \delta_{s s^*}$ holds.
In particular, the ring $R_s R_{s^*}$ is left (right) s-unital 
if and only if the ring $D_s D_s$ is left (right) s-unital.
\end{prop}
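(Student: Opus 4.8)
The plan is to establish the set equality $R_s R_{s^*} = D_s D_s \delta_{s s^*}$ by a direct computation with the multiplication rule on $L_{\pi}$, and then to upgrade it to a ring isomorphism $R_s R_{s^*} \cong D_s D_s$ in order to transfer the (left or right) s-unitality. Since $R = L_{\pi}$ is graded we have $R_s = D_s \delta_s$ and $R_{s^*} = D_{s^*} \delta_{s^*}$, so every element of $R_s R_{s^*}$ is a finite sum of products $(a \delta_s)(b \delta_{s^*})$ with $a \in D_s$ and $b \in D_{s^*}$. The defining relations give $(a \delta_s)(b \delta_{s^*}) = \pi_s(\pi_{s^*}(a)\, b)\, \delta_{s s^*}$, so the problem reduces to determining the additive group generated by the coefficients $\pi_s(\pi_{s^*}(a)\, b)$, for $a \in D_s$ and $b \in D_{s^*}$.

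First I would substitute $a' = \pi_{s^*}(a)$. Since $\pi_{s^*} \colon D_s \to D_{s^*}$ is a bijection, $a'$ ranges over all of $D_{s^*}$, so the coefficients become $\pi_s(a' b)$ with $a', b \in D_{s^*}$. Using that $\pi_s$ is a ring homomorphism, $\pi_s(a'b) = \pi_s(a') \pi_s(b)$, together with $\pi_s(D_{s^*}) = D_s$ and the additivity of $\pi_s$, the additive group they generate is exactly $\pi_s(D_{s^*} D_{s^*}) = D_s D_s$. This gives $R_s R_{s^*} = D_s D_s \delta_{s s^*}$, the first assertion. One should note in passing that $D_s D_s \delta_{s s^*}$ really sits inside the homogeneous component $R_{s s^*} = D_{s s^*} \delta_{s s^*}$; this follows from $D_s \subseteq D_{s s^*}$, which one reads off from the compatibility $\pi_s(D_{s^*} \cap D_t) = D_s \cap D_{st}$ by taking $t = s^*$, since then $\pi_s(D_{s^*}) = D_s \cap D_{s s^*}$ forces $D_s \subseteq D_{s s^*}$.

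For the ``in particular'' statement, I would show that the additive bijection $c \mapsto c \delta_{s s^*}$ is actually a ring isomorphism $D_s D_s \to R_s R_{s^*}$, after which the equivalence of (left or right) s-unitality is immediate, and can be stated via the finite-family criterion of Proposition \ref{tominaga} if desired. The key point is that for the idempotent $e = s s^*$ the map $\pi_e$ is the identity on $D_e$: applying the cocycle relation $\pi_s(\pi_t(x)) = \pi_{st}(x)$ with $s = t = e$ (and $e^* = e$) yields $\pi_e \circ \pi_e = \pi_e$ on $D_e$, and bijectivity of $\pi_e$ forces $\pi_e = \mathrm{id}_{D_e}$. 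Granting this, for $c, c' \in D_s D_s \subseteq D_{s s^*}$ one computes $(c \delta_e)(c' \delta_e) = \pi_e(\pi_e(c)\, c')\, \delta_e = c c' \delta_e$, so the bijection respects multiplication and $R_s R_{s^*} \cong D_s D_s$ as rings.

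The manipulations in the first two paragraphs are routine uses of the partial-action axioms; the only step requiring genuine care is the identification of the ring structure in the last paragraph, that is, verifying $\pi_{s s^*} = \mathrm{id}_{D_{s s^*}}$ and confirming that the multiplication on $D_s D_s \delta_{s s^*}$ inherited from $L_{\pi}$ coincides with ordinary multiplication in $D_s D_s$. Once this identification is secured, left and right s-unitality pass across the isomorphism verbatim, which completes the proof.
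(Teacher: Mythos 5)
Your proof is correct and follows essentially the same route as the paper: the same computation $R_sR_{s^*}=\pi_s(\pi_{s^*}(D_s)D_{s^*})\delta_{ss^*}=\pi_s(D_{s^*}D_{s^*})\delta_{ss^*}=D_sD_s\delta_{ss^*}$, followed by the identification of the ring structure via $D_sD_s\subseteq D_s\subseteq D_{ss^*}$ and $\pi_{ss^*}=\mathrm{id}_{D_{ss^*}}$. The only difference is that you derive these last two facts directly from the partial-action axioms, where the paper cites them from Beuter--Goncalves; that is a harmless (and self-contained) substitution.
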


\begin{proof}
We have that $R_s R_{s^*} = D_s \delta_s D_{s^*} \delta_{s^*} = 
\pi_s( \pi_{s^*}(D_s) D_{s^*} ) \delta_{s s^*} =
\pi_s( D_{s^*} D_{s^*} ) \delta_{s s^*}$ 
$=D_s D_s \delta_{s s^*}.$
From \cite[Proposition 2.2]{beuter2017} it follows that
$D_s D_s \subseteq D_s \subseteq D_{s s^*}$ and
$\pi_{s s^*} = {\rm id}_{D_{s s^*}}$, thus the last claim follows.
\end{proof}

\begin{prop}\label{DsDsDs}
Put $R = L_{\pi}$. For all $s \in S$
the equalities $( R_s R_{s^*} ) R_s = R_s ( R_{s^*} R_s )$ 
$= ( D_s D_s D_s ) \delta_s$ hold.
In particular, $R$ is symmetric if and only if
for all $s \in S$ the ring $D_s$ is idempotent. 
\end{prop}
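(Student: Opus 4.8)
The plan is to compute both triple products directly from the multiplication rule on $L_{\pi}$, using Proposition \ref{DsDs} to rewrite the inner products, and then to read off the ``in particular'' claim.

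First I would handle $(R_s R_{s^*}) R_s$. By Proposition \ref{DsDs} we have $R_s R_{s^*} = D_s D_s \delta_{s s^*}$, so a typical generator is $a \delta_{s s^*}$ with $a \in D_s D_s \subseteq D_s \subseteq D_{s s^*}$. Multiplying on the right by $b \delta_s$ with $b \in D_s$, and using $(s s^*)^* = s s^*$, $(s s^*) s = s$, together with $\pi_{s s^*} = {\rm id}_{D_{s s^*}}$ from \cite[Proposition 2.2]{beuter2017}, the defining relation collapses to $(a \delta_{s s^*})(b \delta_s) = \pi_{s s^*}( \pi_{s s^*}(a) b ) \delta_s = a b \, \delta_s$, where I first check that $\pi_{s s^*}(a) b = a b \in D_{s s^*}$ because $D_{s s^*}$ is an ideal containing $a$. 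Letting $a$ range over $D_s D_s$ and $b$ over $D_s$ and taking finite sums then yields $(R_s R_{s^*}) R_s = D_s D_s D_s \, \delta_s$ as an equality of additive groups.

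Next I would treat $R_s (R_{s^*} R_s)$. Applying Proposition \ref{DsDs} with $s^*$ in place of $s$ (and $(s^*)^* = s$) gives $R_{s^*} R_s = D_{s^*} D_{s^*} \delta_{s^* s}$, so a typical generator is $c \delta_{s^* s}$ with $c \in D_{s^*} D_{s^*}$. Multiplying $a \delta_s$ ($a \in D_s$) on the left and using $s (s^* s) = s$, the relation becomes $(a \delta_s)(c \delta_{s^* s}) = \pi_s( \pi_{s^*}(a) c ) \delta_s$. Here $\pi_{s^*}(a) c \in D_{s^*}$, and since $\pi_s$ is a ring isomorphism it is multiplicative, so $\pi_s( \pi_{s^*}(a) c ) = \pi_s( \pi_{s^*}(a) ) \pi_s(c) = a \, \pi_s(c)$, where $\pi_s( \pi_{s^*}(a) ) = \pi_{s s^*}(a) = a$ follows from property (iii) (with $t = s^*$) and $\pi_{s s^*} = {\rm id}$. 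The key point is that, being a bijective ring homomorphism, $\pi_s$ maps $D_{s^*} D_{s^*}$ onto $D_s D_s$; hence $a \, \pi_s(c)$ ranges over $D_s (D_s D_s) = D_s D_s D_s$, giving $R_s (R_{s^*} R_s) = D_s D_s D_s \, \delta_s$ as well.

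Finally, for the ``in particular'' statement I would note that $R$ is symmetric precisely when $R_s R_{s^*} R_s = R_s$ for all $s$, which by the above equalities and $R_s = D_s \delta_s$ is equivalent to $D_s D_s D_s = D_s$ for all $s$. Since each $D_s$ is an ideal we always have $D_s D_s \subseteq D_s$, whence $D_s D_s D_s \subseteq D_s D_s \subseteq D_s$; thus $D_s D_s D_s = D_s$ forces $D_s D_s = D_s$, and conversely idempotency of $D_s$ immediately gives $D_s D_s D_s = D_s D_s = D_s$. The main thing to watch throughout is the domain bookkeeping --- verifying at each step that the arguments of $\pi_{s s^*}$ and $\pi_s$ genuinely lie in the required ideals, so that the multiplication formula and property (iii) legitimately apply; once that is in place the computation is routine.
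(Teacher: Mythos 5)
Your proposal is correct and follows essentially the same route as the paper: a direct computation of both triple products from the multiplication rule, using $R_sR_{s^*}=D_sD_s\delta_{ss^*}$, the identity $\pi_{ss^*}={\rm id}_{D_{ss^*}}$, and the fact that the ring isomorphism $\pi_s$ carries $D_{s^*}D_{s^*}$ onto $D_sD_s$, followed by the observation that $D_sD_sD_s=D_s$ forces $D_sD_s=D_s$ via the chain $D_sD_sD_s\subseteq D_sD_s\subseteq D_s$. The only cosmetic difference is that you factor $\pi_s(\pi_{s^*}(a)c)=a\,\pi_s(c)$ where the paper keeps the whole product inside a single application of $\pi_s$; the content is identical.
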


\begin{proof}
Take $s \in S$. Then 
$$( R_s R_{s^*} ) R_s = 
( D_s \delta_s D_{s^*} \delta_{s^*} ) D_s \delta_s =
( \pi_s( \pi_{s^*}(D_s) D_{s^*} ) ) \delta_{ss^*} D_s \delta_s =$$
$$ ( \pi_s ( D_{s^*} D_{s^*} ) ) \delta_{ss^*} D_s \delta_s =
D_s D_s \delta_{ss^*} D_s \delta_s =
\pi_{ss^*}( \pi_{ss^*}( D_s D_s ) D_s ) \delta_s =$$
$$\pi_{ss^*} ( D_s D_s D_s ) \delta_s = ( D_s D_s D_s ) \delta_s.$$
Note that $\pi_{ss^*} = {\rm id}_{D_{ss^*}}$. And
$$R _s (R_{s^*} R_s) = 
D_s \delta_s ( D_{s^*} \delta_{s^*} D_s \delta_s ) =
D_s \delta_s ( \pi_{s^*}( \pi_s(D_{s^*}) D_s ) \delta_{s^* s} ) =$$
$$D_s \delta_s \pi_{s^*} ( D_s D_s ) \delta_{s^* s} = 
D_s \delta_s D_{s^*} D_{s^*} \delta_{s^* s} =
\pi_s ( \pi_{s^*}(D_s) D_{s^*} D_{s^*} ) \delta_s =$$  
$$ \pi_s ( D_{s^*} D_{s^*} D_{s^*} ) \delta_s = 
D_s D_s D_s \delta_s.$$
Now we show the last part.
Suppose first that $R$ is symmetric. Then, from the above, it follows that
$D_s = D_s D_s D_s \subseteq D_s D_s \subseteq D_s$. Hence $D_s$ is idempotent.
Now suppose that $D_s$ is idempotent. Then
$D_s D_s D_s = D_s D_s = D_s$. Thus, from the above, it
follows that $R$ is symmetric.
\end{proof}

\begin{prop}\label{associative}
The ring $L_{\pi}$ is a left (right) s-unital epsilon-strong system
if and only if $\pi$ is left (right) s-unital. 
In that case, $L_{\pi}$, and hence also $A \rtimes_{\pi} S$, is associative.
\end{prop}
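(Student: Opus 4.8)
The plan is to treat the two assertions separately, first the equivalence and then the associativity, reducing everything to results already established in this section, and presenting only the left case (the right case being symmetric). For the equivalence I would invoke Proposition \ref{epsilonequivalent} together with Propositions \ref{DsDs} and \ref{DsDsDs}. For the forward direction, assume $L_{\pi}$ is left $s$-unital epsilon-strong. Then by Proposition \ref{epsilonequivalent} it is symmetric and each ring $R_s R_{s^*}$ is left $s$-unital; by Proposition \ref{DsDsDs} symmetry forces each $D_s$ to be idempotent, i.e. $D_s D_s = D_s$, and by Proposition \ref{DsDs} the left $s$-unitality of $R_s R_{s^*}$ is equivalent to that of $D_s D_s = D_s$. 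Hence each $D_s$ is left $s$-unital, that is, $\pi$ is left $s$-unital. For the converse, if each $D_s$ is left $s$-unital then every $a \in D_s$ satisfies $a = ea$ for some $e \in D_s$, so $D_s \subseteq D_s D_s \subseteq D_s$ and $D_s$ is idempotent; Proposition \ref{DsDsDs} then yields symmetry, and Proposition \ref{DsDs} gives that each $R_s R_{s^*}$ is left $s$-unital, whence $L_{\pi}$ is left $s$-unital epsilon-strong by Proposition \ref{epsilonequivalent}.

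For associativity, since $A \rtimes_{\pi} S = L_{\pi}/I$ is a quotient of $L_{\pi}$, it suffices to prove that $L_{\pi}$ is associative, and by additivity it is enough to verify the associative law on homogeneous generators $a \delta_s$, $b \delta_t$, $c \delta_u$ with $a \in D_s$, $b \in D_t$, $c \in D_u$. Both triple products lie in degree $stu$, so the claim reduces to the coefficient identity
\[
\pi_{st}\big(\pi_{(st)^*}(\pi_s(\pi_{s^*}(a)b))\,c\big) = \pi_s\big(\pi_{s^*}(a)\,\pi_t(\pi_{t^*}(b)c)\big).
\]
Writing $p = \pi_{s^*}(a)b \in D_{s^*}\cap D_t$, I would first simplify the left-hand side using only the partial action axioms: axiom (ii) gives $\pi_s(p) \in D_s \cap D_{st}$; the composition rule (iii), applied with $t^*,s^*$ in place of $s,t$, together with $\pi_{s^*}\pi_s = \mathrm{id}$ on $D_{s^*}$, yields $\pi_{(st)^*}(\pi_s(p)) = \pi_{t^*}(p)$; and, since $\pi_{t^*}(p) \in D_{t^*}\cap D_{t^*s^*}$ by (ii), a further application of (iii) gives $\pi_{st}(\pi_{t^*}(p)c) = \pi_s(\pi_t(\pi_{t^*}(p)c))$. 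Cancelling the injective map $\pi_s$, the identity to be proved collapses to
\[
\pi_t\big(\pi_{t^*}(\pi_{s^*}(a)b)\,c\big) = \pi_{s^*}(a)\,\pi_t(\pi_{t^*}(b)c).
\]

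This last equality is where $s$-unitality is essential and is the main obstacle, since the factor $\pi_{s^*}(a)$ lies in $D_{s^*}$ rather than $D_t$, so $\pi_{t^*}$ cannot be distributed over the product $\pi_{s^*}(a)b$ directly. Here I would use left $s$-unitality of $D_t$ (via Proposition \ref{tominaga}) to choose $e \in D_t$ with $eb = b$. Then $\pi_{s^*}(a)b = (\pi_{s^*}(a)e)b$ with both $\pi_{s^*}(a)e$ and $b$ in $D_t$, so the homomorphism property of $\pi_{t^*}$ on $D_t$ applies; pushing the resulting factors back through $\pi_t$, using that $\pi_t$ and $\pi_{t^*}$ are mutually inverse on the relevant domains, turns the left-hand side into $\pi_{s^*}(a)\,e\,q$ with $q = \pi_t(\pi_{t^*}(b)c)$. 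The same $e$ gives $eq = q$, since $\pi_{t^*}(b) = \pi_{t^*}(e)\pi_{t^*}(b)$ yields $q = eq$ after applying $\pi_t$; hence the left-hand side equals $\pi_{s^*}(a)q$, which is exactly the right-hand side. Thus $L_{\pi}$, and therefore $A \rtimes_{\pi} S$, is associative, the right case being entirely analogous with right local units.
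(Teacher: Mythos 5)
Your proof is correct. The equivalence half coincides with the paper's argument: both directions are run through Propositions \ref{epsilonequivalent}, \ref{DsDs} and \ref{DsDsDs} in exactly the way the paper does, with the forward direction extracting $D_sD_s=D_s$ from symmetry and then transferring left s-unitality from $R_sR_{s^*}$ to $D_s$. Where you genuinely diverge is the associativity claim. The paper does not verify associativity from scratch; it cites \cite[Theorem 3.4]{exel2010} to reduce associativity of $L_{\pi}$ to the single identity $a\,\pi_s(\pi_{s^*}(b)c)=\pi_s(\pi_{s^*}(ab)c)$ for $a\in D_{r^*}$, $b\in D_s$, $c\in D_t$, and then proves that identity with an s-unit chosen (via Proposition \ref{tominaga}) to absorb two elements at once. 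You instead expand both triple products of homogeneous elements and use the partial-action axioms (ii), (iii) together with $\pi_{s^*}\pi_s=\mathrm{id}_{D_{s^*}}$ to carry out that reduction yourself, arriving at the same key identity (with $\pi_{s^*}(a)$ playing the role of the paper's $a$), which you then prove with an s-unit $e$ satisfying only $eb=b$, deducing $eq=q$ from it. Your route buys self-containedness — the reader need not consult Exel--Vieira — at the cost of the careful domain bookkeeping ($\pi_s(p)\in D_s\cap D_{st}$, $\pi_{t^*}(p)c\in D_{t^*}\cap D_{t^*s^*}$, etc.) that the citation hides; your choice of s-unit is also marginally leaner than the paper's. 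All of these domain verifications check out, so both proofs are sound.
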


\begin{proof}
The ''if'' statement follows from Proposition \ref{epsilonequivalent},
Proposition \ref{DsDs} and Proposition \ref{DsDsDs}. 
Now we show the ''only if'' statement. Take $s \in S$. 
From Proposition \ref{DsDsDs} it follows that $D_s D_s= D_s$.
From Proposition \ref{DsDs} we get that the ring $D_s D_s$
is left (right) s-unital.
This, in combination with the equality $D_s D_s = D_s$,
implies that $D_s$ is left (right) s-unital as a $D_s D_s$-module.
Therefore, in particular, $D_s$ is left (right) s-unital as a ring.
For the last statement, suppose that $D_s$ is left (right) s-unital.
We wish to show that $R$ is associative.
From \cite[Theorem 3.4]{exel2010} this follows if we can
show the equality $a \pi_s ( \pi_{s^*} (b) c ) = \pi_s ( \pi_{s^*}(ab) c )$
for all $r,s,t \in S$, all $a \in D_{r^*}$,
all $b \in D_s$ and all $c \in D_t$.
First we show the ''left'' part.
Since $D_s$ is left s-unital, there exists $d \in D_s$ such that 
$d \pi_s ( \pi_{s^*} (b) c ) = \pi_s ( \pi_{s^*} (b) c ) $ and $d b = b$.
Then 
$a \pi_s ( \pi_{s^*} (b) c ) = 
a d \pi_s ( \pi_{s^*} (b) c ) = 
\pi_s ( \pi_{s^*} (ad) ) \pi_s ( \pi_{s^*} (b) c ) =$$
$$\pi_s ( \pi_{s^*} (ad) \pi_{s^*} (b) c ) =
\pi_s ( \pi_{s^*} (adb) c ) =
\pi_s ( \pi_{s^*}(ab) c ).$
Now we show the ''right'' part.
Since $D_s$ is right s-unital, there exists $e \in D_{s^*}$ such that 
$\pi_{s^*}(b)e = b$ and $\pi_{s^*}(ab)e = \pi_{s^*}(ab)$.
Then 
$a \pi_s ( \pi_{s^*} (b) c ) = 
a \pi_s ( \pi_{s^*} (b) e c ) =
a \pi_{ss^*}(b) \pi_s( ec ) =
ab \pi_s (ec) = \pi_{s s^*} (ab) \pi_s(ec) =
\pi_s( \pi_{s^*}(ab) ec ) = 
\pi_s( \pi_{s^*}(ab) c ).$
\end{proof}

\begin{rem}\label{subring}
In \cite[Proposition 3.1]{beuter2018} it is shown that 
if $\pi$ and $A$ are locally unital, then the map 
$i : A \rightarrow (A \rtimes_{\pi} S )_0$ defined by sending
$a = \sum_{i=1}^n a_{e_i}$, where $a_{e_i} \in D_{e_i}$,
to $\sum_{i=1}^n \overline{a_{e_i} \delta_{e_i}}$,
is a well defined isomorphism of rings
with inverse given by the restriction to $(A \rtimes_{\pi} S )_0$
of the map $t : A \rtimes_{\pi} S \rightarrow A$
defined by $t( \sum_{i=1}^n \overline{a_i \delta_{s_i} } ) = \sum_{i=1}^n a_i$,
for $s_i \in S$ and $a_i \in D_{s_i}$.
It is clear from the proof given in loc. cit.
that the same conclusions hold when $\pi$ and $A$ are s-unital.
\end{rem}

\begin{prop}\label{propsystemsimple}
If $\pi$ and $A$ are s-unital, then
$A \rtimes_{\pi} S$ is system simple if and only if $A$ is $S$-simple.
\end{prop}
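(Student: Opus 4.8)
The plan is to set up a correspondence between the system ideals of $R = A \rtimes_{\pi} S$ and the $S$-invariant ideals of $A$, using the ring isomorphism $i : A \to R_0$ and the map $t : R \to A$, $t(\sum_i \overline{a_i \delta_{s_i}}) = \sum_i a_i$, from Remark \ref{subring} (which satisfy $t \circ i = \mathrm{id}_A$). Throughout I would use that $R$ is s-unital epsilon-strong by Proposition \ref{associative}, so that by Proposition \ref{epsilonequivalent} each $R_s$ carries one-sided local units drawn from $R_s R_{s^*} \subseteq R_0$ and $R_{s^*} R_s \subseteq R_0$, together with the partial-action identities $\pi_s \pi_{s^*} = \mathrm{id}_{D_s}$, $\pi_{s^*} \pi_s = \mathrm{id}_{D_{s^*}}$ and $\pi_e = \mathrm{id}_{D_e}$ for $e \in E(S)$.

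For the implication ``$A$ is $S$-simple $\Rightarrow$ $R$ is system simple'', I would take a non-zero system ideal $I$ and show $I = R$. First I would check $I \cap R_0 \neq 0$: since $I = \sum_s I \cap R_s$ is non-zero there is a non-zero $\overline{a \delta_s} \in I$ with $0 \neq a \in D_s$, and choosing $b \in D_{s^*}$ with $\pi_{s^*}(a)b = \pi_{s^*}(a)$ (right s-unitality of $D_{s^*}$) yields $\overline{a \delta_s}\,\overline{b \delta_{s^*}} = \overline{a \delta_{ss^*}} = i(a) \in I \cap R_0$, which is non-zero since $i$ is injective. Then I would put $J = i^{-1}(I \cap R_0)$, a non-zero ideal of $A$, and prove it is $S$-invariant: given $a \in J \cap D_{s^*}$, choose $u, b \in D_{s^*}$ with $ua = a$ and $ab = a$, set $c = \pi_s(u) \in D_s$ (so $\pi_{s^*}(c) = u$), and compute $\overline{c \delta_s}\, i(a)\, \overline{b \delta_{s^*}} = \overline{\pi_s(\pi_{s^*}(c)\,a\,b)\,\delta_{ss^*}} = \overline{\pi_s(a) \delta_{ss^*}} = i(\pi_s(a))$; since this lies in $I \cap R_0$ we conclude $\pi_s(a) \in J$. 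Now $S$-simplicity forces $J = A$, i.e. $R_0 \subseteq I$, and for any $r \in R_s$ the left local unit $\epsilon_s \in R_s R_{s^*} \subseteq R_0 \subseteq I$ gives $r = \epsilon_s r \in I$, whence $R = \sum_s R_s \subseteq I$.

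For the converse I would argue contrapositively: given an $S$-invariant ideal $J$ with $0 \neq J \neq A$, I would form $I = \sum_{s \in S} \overline{(J \cap D_s) \delta_s}$ and verify it is a proper non-zero system ideal. That $I$ is a two-sided ideal follows from the multiplication formula and $S$-invariance, since $\overline{a \delta_u}\,\overline{c \delta_s} = \overline{\pi_u(\pi_{u^*}(a)c)\,\delta_{us}}$ has coefficient in $J \cap D_{us}$ whenever $c \in J \cap D_s$; the identity $I = \sum_s I \cap R_s$ is then immediate. It is non-zero because $i(J) \subseteq I$ with $i$ injective. Properness is where $t$ does the work: every element of $I$ has a representative $\sum_i \overline{c_i \delta_{t_i}}$ with all $c_i \in J$, so $t(I) \subseteq J \subsetneq A = t(R)$ and hence $I \neq R$.

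I expect the main obstacle to be the $S$-invariance of $J$ in the first implication: it is the one step that genuinely requires realizing the partial action by ``conjugation by $\delta_s$'' inside $R$, and making the bookkeeping work forces one to manufacture the precise one-sided local units $u, b \in D_{s^*}$ so that the conjugated element collapses back to $i(\pi_s(a))$ in $R_0$ rather than to something in a larger component. The remaining verifications are routine manipulations with the multiplication formula and the maps $i$ and $t$.
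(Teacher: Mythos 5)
Your proposal is correct. The ``only if'' half (constructing $I = \sum_{s} \overline{(J \cap D_s)\delta_s}$ from a proper non-zero $S$-invariant ideal $J$ and using $t(I) \subseteq J \subsetneq A = t(R)$ to see $I \neq R$) is essentially the paper's argument verbatim. Where you genuinely diverge is the other direction: the paper takes a non-zero system ideal $I$ and works with $t(I)$, first verifying by hand that $t(I)$ is an ideal of $A$ and that it is $S$-invariant, then, from $t(I)=A$, pulling an arbitrary $\overline{a\delta_s}$ into $I$ through a multi-step reduction (passing to homogeneous components of a preimage of a local unit, collapsing their indices to idempotents, and using $s_i s \leq s$). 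You instead work with $I \cap R_0$ pulled back along the isomorphism $i$: the system-ideal hypothesis gives a non-zero homogeneous $\overline{a\delta_s} \in I$, right s-unitality of $D_{s^*}$ pushes it into $R_0$, the conjugation computation $\overline{\pi_s(u)\delta_s}\, i(a)\, \overline{b\delta_{s^*}} = i(\pi_s(a))$ gives $S$-invariance of $i^{-1}(I\cap R_0)$, and once $R_0 \subseteq I$ the left local units $\epsilon_s \in R_sR_{s^*} \subseteq R_0$ finish instantly. This is shorter and cleaner than the paper's route and avoids the map $t$ in that direction entirely; the paper's route has the mild advantage of not needing to first locate a non-zero element of $I \cap R_0$. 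The only detail you gloss over is the claim $i(J) \subseteq I$ in the converse: for $a \in J$ an arbitrary decomposition $a = \sum a_{e_i}$ need not have components in $J$, so you should decompose via an s-unit, $a = \sum u_i a$ with $u_i \in D_{e_i}$, so that $u_i a \in J \cap D_{e_i}$, and invoke well-definedness of $i$. This is a one-line fix, not a gap in the argument.
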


\begin{proof}
Put $R = A \rtimes_{\pi} S$.

First we show the ''only if'' statement.
Suppose that $A \rtimes_{\pi} S$ is system simple.
Let $J$ be a non-zero $S$-invariant ideal of $A$.
For all $s \in S$ put $I_s = \overline{(J \cap D_s) \delta_s}$
and let $I = \sum_{s \in S} I_s$.
Take $s \in S$. Since $I_s \subseteq R_s$ it follows that
$I_s \subseteq I \cap R_s$. Thus $I \subseteq \sum_{s \in S} I \cap R_s$.
The inclusion $I \supseteq \sum_{s \in S} I \cap R_s$ is trivial.
Thus $I = \sum_{s \in S} I \cap R_s$.
Now we show that $I$ is an ideal of $R$.
To this end, take $s,t \in S$. 
Then 
$$\overline{ D_t \delta_t } \cdot \overline{ (J \cap D_s) \delta_s } =
\overline{ \pi_t( \pi_{t^*}(D_t)  (J \cap D_s)  ) \delta_{ts} } = 
\overline{ \pi_t( D_{t^*}  (J \cap D_s)  ) \delta_{ts} }.$$
Since $J$ is $S$-invariant, we get that
$\pi_t( D_{t^*}  (J \cap D_s)  ) \subseteq 
\pi_t( D_{t^*} \cap J ) \subseteq J$
and from the definition of partial action, we get that
$\pi_t( D_{t^*}  (J \cap D_s)  ) \subseteq 
\pi_t( D_{t^*} \cap D_s ) = D_t \cap D_{ts} \subseteq D_{ts}.$
Thus $I$ is a left ideal of $R$. Also 
$$\overline{ (J \cap D_s) \delta_s } \cdot \overline{ D_t \delta_t } =
\overline{ \pi_s( \pi_{s^*}(J \cap D_s)  D_t  ) \delta_{st} } = 
\overline{ \pi_s( J \cap D_{s^*} \cap D_t ) \delta_{ts} }.$$
Since $J$ is $S$-invariant, we get that
$\pi_s( J \cap D_{s^*} \cap D_t ) \delta_{ts} \subseteq 
\pi_s( J \cap D_{s^*} ) \subseteq J$
and from the definition of partial action, we get that
$\pi_s( J \cap D_{s^*} \cap D_t ) \delta_{ts} \subseteq 
\pi_s( D_{s^*} \cap D_t ) =  D_s \cap D_{st} \subseteq D_{st}.$
Thus $I$ is a right ideal of $R$.
Since $R$ is system simple, this implies that $I = R$.
Then $t(I) = t(R) = \sum_{s \in S} D_s = A$.
On the other hand, from the construction of $I$, it follows that
$t(I) = \sum_{s \in S} (J \cap D_s) \subseteq J$.
Thus $J = A$.

Now we show the ''if'' statement.
Suppose that $A$ is $S$-simple.
Let $I$ be a non-zero system ideal of $R$.
We wish to show that $I = R$.
Then $t(I)$ is a non-zero additive subgroup of $A$.
First we show that $t(I)$ is an ideal of $A$.
Take $s,t \in S$, $a \in D_s$ and $b \in D_t$
such that $\overline{ b \delta_t } \in I$. 
Since $D_t$ is s-unital, there is $c \in D_t$ such that $cb = bc = b$. Then 
$ab = 
t( \overline{ ab \delta_t } ) = 
t( \overline{ acb \delta_t } ) = 
t( \overline{ \pi_{tt^*}(\pi_{tt^*}(ac)b) \delta_{tt^*t} } ) =
t( \overline{ac \delta_{tt^*}} \cdot \overline{b \delta_t}) \in t(I)$
and
$ba = 
t( \overline{ bca \delta_t} ) =
t( \overline{ \pi_t( \pi_{t^*}(b) \pi_{t^*}(ca) ) \delta_t } ) =
t( \overline{b \delta_t} \cdot \overline{ \pi_{t^*}(ca) \delta_{t^*t} } ) \in t(I).$
Now we show that $t(I)$ is $S$-invariant.
Take $s \in S$ and $a \in t(I) \cap D_{s^*}$.
There exists $n \in \mathbb{N}$, $t_n,\ldots,t_n \in S$,
$b_i \in D_{t_i}$, for $i = 1,\ldots,n$,
such that $\sum_{i=1}^n \overline{ b_i \delta_{t_i} } \in I$
and $\sum_{i=1}^n b_i = t( \overline{ \sum_{i=1}^n b_i \delta_{t_i} } ) = a$.
Since $D_{s^*}$ is s-unital there is $d \in D_s$ such that
$\pi_{s^*}(d) a = a$. Then
$\pi_s ( a ) = 
\pi_s( \pi_{s^*}(d) a ) =
\sum_{i=1}^n \pi_s ( \pi_{s^*}(d) b_i ) =
\sum_{i=1}^n t( \overline{ \pi_s ( \pi_{s^*}(d)b_i ) \delta_{s t_i} } )
= t( \overline{d \delta_s} \cdot \sum_{i=1}^n \overline{ b_i \delta_{t_i} } ) \in t(I).$
Thus $t(I)$ is a non-zero $S$-invariant ideal of $A$.
From $S$-simplicity of $A$, we hence get that $t(I) = A$.
Take $s \in S$ and $a \in D_s$. We wish to show that $\overline{a \delta_s} \in I$.
Since $D_s$ is s-unital there is $b \in D_s$ such that $ba = a$.
Since $t(I) = A$, there is $n \in \mathbb{N}$, 
$s_n,\ldots,s_n \in S$,
$b_i \in D_{s_i}$, for $i = 1,\ldots,n$,
such that $\sum_{i=1}^n \overline{ b_i \delta_{s_i} } \in I$
and $\sum_{i=1}^n b_i = t( \sum_{i=1}^n \overline{ b_i \delta_{s_i} } ) = b$.
Since $I$ is a system ideal we may assume that $\overline{ b_i \delta_{s_i} } \in I$
for $i = 1, \ldots , n$. Take $i \in \{ 1, \ldots , n\}$.
Since $D_{s_i^*}$ is s-unital there is $c \in D_{s_i^*}$
such that $\pi_{s_i^*}(b_i)c = \pi_{s_i^*}(b_i)$. Then
$I \ni \overline{b_i \delta_{s_i} } \cdot \overline{c \delta_{s_i^*}} =
\overline{ \pi_{s_i} ( \pi_{s_i^*}(b_i) c ) \delta_{s_i s_i^*} } =
\overline{ \pi_{s_i} ( \pi_{s_i^*}(b_i) ) \delta_{s_i s_i^*} } =
\overline{ b_i \delta_{s_i s_i^*} }.$
Therefore, we may assume that all $s_i$ are idempotent.
From \cite[Proposition 2.2]{beuter2018} it follows that
$\pi_{s_i} = {\rm id}_{D_{s_i}}$. Thus, from the definition of $\pi$
we get that $D_{s_i} D_s \subseteq D_{s_i} \cap D_s =
\pi_{s_i} ( D_{s_i^*} \cap D_s ) = D_{s_i} \cap D_{s_i s}
\subseteq D_{s_i s}$.
Hence, finally, we get that
$\overline{ a \delta_s } =
\overline{ba \delta_s} =
\overline{ \sum_{i=1}^n b_i a \delta_s } =
[s_i s \leq s] =
\overline{ \sum_{i=1} ^n b_i a \delta_{s_i s}} =
\overline{ ( \sum_{i=1}^n b_i \delta_{s_i} ) a \delta_s  } =
\overline{ \sum_{i=1}^n b_i \delta_{s_i} } \cdot \overline{ a \delta_s  } \in I$.
\end{proof}

\begin{prop}\label{propequivalence}
If $\pi$ is $s$-unital, and $A$ is commutative and $s$-unital, 
then $A$ is a maximal commutative subring of $A \rtimes_{\pi} S$ if and only if
$( A \rtimes_{\pi} S ) / A$ has the ideal intersection property.
\end{prop}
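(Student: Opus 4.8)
\emph{Plan.} Throughout I write $R = A \rtimes_{\pi} S$ and identify $A$ with $R_0 = \sum_{e \in E(S)} R_e$ by means of the isomorphism of Remark \ref{subring}, which is available since $\pi$ and $A$ are s-unital. Because $\pi$ is s-unital, Proposition \ref{associative} shows that $R$ is a coherent s-unital epsilon-strong system. Moreover, since $A = R_0$ is commutative, we have $Z(R_0) = R_0 = A$, and therefore $C_R(Z(R_0)) = C_R(A)$. As maximal commutativity of $A$ in $R$ means exactly $C_R(A) = A$, and as the commutativity of $A$ gives the automatic inclusion $A \subseteq C_R(A)$, the whole proposition reduces to proving the equivalence: $C_R(A) \subseteq A$ if and only if $R/A$ has the ideal intersection property.

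The forward implication is essentially free. If $A$ is maximal commutative, then $C_R(Z(R_0)) = C_R(A) = A$, so the conclusion of Proposition \ref{minimalprop} (whose hypotheses hold by the first paragraph) reads precisely: $R/A$ has the ideal intersection property. Hence the first step is simply to invoke Proposition \ref{minimalprop} through the identification $C_R(Z(R_0)) = A$.

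For the reverse implication I would argue by contraposition: assuming $C_R(A) \not\subseteq A$, I would produce a non-zero ideal of $R$ meeting $A$ only in $\{0\}$, contradicting the intersection property. Fix $x \in C_R(A) \setminus A$ and write $x = \sum_{i=1}^n \overline{a_i \delta_{s_i}}$. A useful preliminary observation is that the local units $\epsilon_{s}, \epsilon_s'$ of Corollary \ref{epsiloncor} lie in $R_s R_{s^*}, R_{s^*} R_s \subseteq R_{s s^*} \subseteq R_0 = A$; combining these with the s-unitality of $A$ (Proposition \ref{tominaga}) lets me attach to $x$ a single $u \in A$ with $ux = xu = x$. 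The model case to keep in mind is a skew group ring in which some homogeneous $y$ of degree $\neq e$ centralizes $A$: there the difference $y - u$ generates a non-zero ideal whose degree-$e$ contributions telescope to $0$, so the ideal avoids $A$. I would imitate this by subtracting from $x$ a suitable "diagonal" correction to form a non-zero $z \in C_R(A)$, and then show that the two-sided ideal $\langle z \rangle$ satisfies $\langle z \rangle \cap A = \{0\}$, using the degree function $d$ of Definition \ref{degreemap} and the coherence of $R$ to control which products can land in $R_0$.

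I expect this last construction to be the main obstacle. Unlike the group-graded situation, the sum $R = \sum_{s} R_s$ is not direct and the partial order on $S$ identifies various summands, so there is no canonical projection of $x$ onto a "diagonal part"; consequently both the correction term and the verification $\langle z \rangle \cap A = \{0\}$ must be handled by order-theoretic bookkeeping in $S$ together with repeated use of the units $\epsilon_s \in A$, rather than by a clean homogeneous-component argument. I anticipate that the commutativity of $A$ enters decisively at this point, collapsing the relevant products so that the surviving degree-$e$ terms cancel.
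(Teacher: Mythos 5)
Your forward direction is correct and is exactly the paper's argument: since $A$ is commutative, $Z(R_0)=R_0=A$ under the identification of Remark \ref{subring}, so maximal commutativity gives $C_R(Z(R_0))=A$, and Proposition \ref{minimalprop} (whose hypotheses are supplied by Proposition \ref{associative}) then says precisely that $(A\rtimes_\pi S)/A$ has the ideal intersection property.

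The reverse direction, however, is a genuine gap: you describe a plan, explicitly flag its central construction as ``the main obstacle,'' and never carry it out. Producing $z\in C_R(A)\setminus\{0\}$ and verifying $\langle z\rangle\cap A=\{0\}$ \emph{is} the content of this implication, so nothing has been proved. The paper disposes of it by citing the first part of the proof of \cite[Theorem 3.4]{beuter2017}, observing that that argument goes through verbatim in the s-unital setting. Moreover, the specific difficulty you raise --- that the sum $R=\sum_s R_s$ is not direct, so there is no canonical projection onto a ``diagonal part'' --- is resolved by a tool already available to you and which you overlooked: the additive map $t:A\rtimes_\pi S\to A$, $t(\sum_i \overline{a_i\delta_{s_i}})=\sum_i a_i$, of Remark \ref{subring}, which restricts to the inverse of $i:A\to R_0$. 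Given $x\in C_R(A)\setminus A$, the correction you were searching for is $z=x-i(t(x))$, which is non-zero (as $x\notin A$) and lies in $C_R(A)$ (as $A$ is commutative); one then checks, using that $z$ centralizes $A$ together with s-unitality, that $t$ vanishes on the ideal generated by $z$, whence that ideal meets $A=\ker(t)\cap R_0$... more precisely meets $A$ trivially. Without either this construction or the explicit appeal to \cite{beuter2017}, your proof of the ``if'' statement is incomplete.
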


\begin{proof}
The ''only if'' statement follows from Proposition \ref{minimalprop} and Proposition \ref{associative}.
The ''if'' statement follows from the first part of the proof of \cite[Theorem 3.4]{beuter2017}
which holds in the s-unital case also.
\end{proof}

We are now ready to state and prove the main result of this section (which in 
Section \ref{introduction} was named Theorem \ref{secondmaintheorem}).

\begin{thm}\label{newsecondmaintheorem}
Suppose that $\pi$ is an s-unital partial action of 
an inverse semigroup $S$ on an associative (but not necessarily commutative) 
$s$-unital ring $A$.
If $A$ is $S$-simple and $C_{A \rtimes_{\pi} S}( Z(A) ) \subseteq A$, 
then $A \rtimes_{\pi} S$ is simple.
If $A$ is commutative, then $A \rtimes_{\pi} S$ is simple if and only if
$A$ is $S$-simple and $A$ is a maximal 
commutative subring of $A \rtimes_{\pi} S$.
\end{thm}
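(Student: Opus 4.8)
The plan is to deduce both statements from the structural results established earlier in this section, after fixing the standing identification of $A$ with $R_0$. Writing $R = A \rtimes_{\pi} S$, I first note that since $\pi$ is s-unital, Proposition \ref{associative} guarantees that $R$ is an associative coherent s-unital epsilon-strong system, and Remark \ref{subring} provides a ring isomorphism $i : A \to R_0$. Under this identification the image of $A$ in $R$ is exactly $R_0$, and because $i$ is a ring isomorphism it carries $Z(A)$ onto $Z(R_0)$; hence the hypothesis $C_R(Z(A)) \subseteq A$ reads literally as $C_R(Z(R_0)) \subseteq R_0$. For the first statement I would then invoke Proposition \ref{propsystemsimple}: since $A$ is $S$-simple (and $\pi$, $A$ are s-unital), $R$ is system simple. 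With system simplicity, coherence, the s-unital epsilon-strong property, and $C_R(Z(R_0)) \subseteq R_0$ all in hand, Theorem \ref{newmaintheorem} immediately yields that $R$ is simple.

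For the commutative case, suppose first that $A$ is $S$-simple and is a maximal commutative subring of $R$. Since $A$ is commutative, $Z(A) = A$, so under the identification $A = R_0$ we have $Z(R_0) = R_0$ and therefore $C_R(Z(R_0)) = C_R(R_0) = C_R(A) = A = R_0$, where the third equality is precisely the maximal commutativity hypothesis. Thus the hypothesis of the first statement is satisfied, and simplicity of $R$ follows. Conversely, assume $R$ is simple. Then $R$ is system simple by Remark \ref{simpleremark}, so $A$ is $S$-simple by Proposition \ref{propsystemsimple}. Moreover, simplicity forces $A$ to be non-zero (otherwise all $D_s$ vanish and $R = 0$), so the unique non-zero ideal $R$ meets $A = R_0$ non-trivially; that is, the extension $R/A$ has the ideal intersection property. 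By Proposition \ref{propequivalence} this is equivalent to $A$ being a maximal commutative subring of $R$, which completes the equivalence.

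I do not expect a genuine analytic obstacle here, since the theorem is essentially a translation of Theorem \ref{newmaintheorem} and Proposition \ref{propsystemsimple} through the dictionary supplied by Remark \ref{subring}. The one point requiring care is making the identification fully explicit: one must check that the embedding $i : A \to R_0$ matches the two formulations closely enough that $C_{A \rtimes_{\pi} S}(Z(A)) \subseteq A$ and $C_R(Z(R_0)) \subseteq R_0$ are the \emph{same} condition, and that in the commutative subcase maximal commutativity of $A$ in $R$ is the same as maximal commutativity of $R_0$ in $R$. Both reductions hinge only on $i$ being a ring isomorphism onto $R_0$, so once this bookkeeping is recorded, each implication reduces to a direct citation of the propositions above.
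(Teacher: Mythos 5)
Your proposal is correct and follows essentially the same route as the paper: the first part is Theorem \ref{newmaintheorem} combined with Proposition \ref{propsystemsimple}, and the commutative case combines the first part with Proposition \ref{propsystemsimple}, Remark \ref{simpleremark} and Proposition \ref{propequivalence}. The only difference is that you spell out the bookkeeping (the identification $A \cong R_0$ via Remark \ref{subring} and the observation that maximal commutativity of a commutative $A$ is exactly $C_R(Z(R_0)) \subseteq R_0$) that the paper leaves implicit.
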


\begin{proof}
The first part follows from Theorem \ref{newmaintheorem} and Proposition \ref{propsystemsimple}.
The second part follows from the first part, Proposition \ref{propsystemsimple} and Proposition \ref{propequivalence}.
\end{proof}

\begin{rem}
Since the class of s-unital rings properly contains the class of locally unital rings
(even in the commutative case, see Example \ref{notlocallyunital}) it follows that
Theorem \ref{newsecondmaintheorem} is a proper generalization of Theorem \ref{thmbeuter}. 
\end{rem}

For use in subsequent sections, we now introduce a generalization of the concept 
of a faithful group action (see e.g. \cite[Chapter 1.4]{karpilovsky1987}).

\begin{defi}
We say that $\pi$ is {\it faithful} if for all $s \in S \setminus E(S)$,
$\pi_s \neq {\rm id}_{D_{s^*}}$ holds.
\end{defi}

\begin{prop}\label{faithfulprop}
Suppose that $\pi$ and $A$ are s-unital and that for every $s \in S \setminus E(S)$
the ring $D_s$ is non-zero.
If $C_{A \rtimes_{\pi} S}( Z(A) ) \subseteq A$, then $\pi$ is faithful. 
\end{prop}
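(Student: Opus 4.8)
The plan is to argue by contraposition: assuming that $\pi$ is not faithful, I will exhibit an element of $C_{A \rtimes_\pi S}(Z(A))$ that does not lie in (the copy of) $A$, thereby showing $C_{A \rtimes_\pi S}(Z(A)) \not\subseteq A$. So suppose there is $s \in S \setminus E(S)$ with $\pi_s = \mathrm{id}_{D_{s^*}}$. Then $D_s = \pi_s(D_{s^*}) = D_{s^*}$ and $\pi_{s^*} = \pi_s^{-1} = \mathrm{id}_{D_s}$, while the hypothesis (since $s \notin E(S)$) gives $D_s \neq \{0\}$; fix a nonzero $a \in D_s = D_{s^*}$. Write $R = A \rtimes_\pi S$ and identify $A$ with $R_0$ via the isomorphism $i$ of Remark \ref{subring}, so that $Z(A) \subseteq R_0 \subseteq R$.

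First I would verify that $x := \overline{a\delta_s}$ lies in $C_R(Z(A))$. Let $z \in Z(A)$ and write $z = \sum_e z_e$ with $z_e \in D_e$ and $e \in E(S)$, so $i(z) = \sum_e \overline{z_e \delta_e}$. Using that idempotents act as the identity ($\pi_e = \mathrm{id}_{D_e}$ by \cite[Proposition 2.2]{beuter2018}), that each $D_e$ is an ideal, and that $D_e \cap D_s \subseteq D_{es}$, a direct computation gives $\overline{z_e\delta_e}\cdot\overline{a\delta_s} = \overline{z_e a\, \delta_{es}}$; since $es \le s$ and $z_e a \in D_{es}$, the defining relations of $R$ rewrite this as $\overline{z_e a\,\delta_s}$, and summing over $e$ yields $i(z)\,x = \overline{za\,\delta_s}$. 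The analogous computation on the other side, now using $\pi_{s^*} = \mathrm{id}_{D_s}$ and $se \le s$, gives $x\,i(z) = \overline{az\,\delta_s}$. Since $z$ is central in $A$ we have $za = az$, whence $i(z)\,x = x\,i(z)$, so $x \in C_R(Z(A))$. Moreover $x \neq 0$, because the well-defined map $t$ of Remark \ref{subring} sends $x$ to $a \neq 0$.

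The main obstacle is to show that $x = \overline{a\delta_s} \notin R_0$; this is where essentially all the difficulty lies, since $R = L_\pi / I$ is only a coherent system (not graded) and the relations in $I$ identify elements of different degrees. Equivalently, I must show $a\delta_s - \sum_e b_e\delta_e \notin I$ for every diagonal element $\sum_e b_e\delta_e$ with $b_e \in D_e$. The plan is to analyse the generators $c\delta_r - c\delta_{r'}$ of $I$, which are homogeneous for the order relation $r \le r'$, and to extract from this a normal-form statement describing $R_s \cap R_0$ precisely, the expected outcome being that $\overline{a\delta_s} \in R_0$ forces $a \in \sum_{e \in E(S),\, e \le s} D_e$. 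Granting such a description, it remains to arrange that $a \notin \sum_{e \le s} D_e$; here one would pass, if necessary, to a suitably minimal non-idempotent $s$ with $\pi_s = \mathrm{id}_{D_{s^*}}$, so that the idempotents properly below $s$ contribute domains whose sum is a proper additive subgroup of the nonzero $D_s$, guaranteeing a nonzero admissible $a$. It is exactly at this point that the hypothesis $D_s \neq \{0\}$ for $s \notin E(S)$ is indispensable. Once $x \notin R_0 = A$ is established, the contrapositive is complete and the proposition follows.
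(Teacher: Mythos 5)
Your computation that $\overline{a\delta_s}$ centralizes $Z(A)$ is correct and matches the paper's, but the proof has a genuine gap exactly where you flag it: you never actually establish that your witness lies outside $A$. Worse, your choice of witness makes this step harder than it needs to be, and in general false without further care: if there happens to be an idempotent $e\leq s$ with $a\in D_e$, then $a\delta_e - a\delta_s$ is one of the generators of the ideal $I$, so $\overline{a\delta_s}=\overline{a\delta_e}\in R_0=A$ and the element witnesses nothing. Your proposed repair --- a normal-form description of $R_s\cap R_0$, plus passing to a ``suitably minimal'' non-idempotent $s$ and choosing $a\notin\sum_{e\leq s}D_e$ --- is only a plan: you prove neither the normal-form statement nor the existence of such an $a$ (note that $\sum_{e\leq s}D_e$ being a \emph{proper} subgroup of $D_s$ is not guaranteed by the hypotheses, and the set of $s$ with $\pi_s=\mathrm{id}_{D_{s^*}}$ need not contain a minimal non-idempotent element).

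The paper avoids all of this with a small but essential modification of the witness: it takes $x=\overline{a\delta_s}-\overline{a\delta_{ss^*}}$ rather than $\overline{a\delta_s}$ alone. The second term is absorbed by $Z(A)$ automatically (it lies in $R_0$ and commutes with $Z(A)$ by Remark \ref{subring}), so the centralizer computation reduces to the one you already did. The payoff is in the non-membership step: the splitting map $t$ of Remark \ref{subring} satisfies $t(x)=a-a=0$ and restricts to an isomorphism $R_0\to A$, so $x\in A=R_0$ would force $x=i(t(x))=0$; hence $x\notin A$ reduces to $x\neq 0$, with no analysis of the relations in $I$ and no constraints on the choice of $a$ beyond $a\neq 0$. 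You should adopt this device (or supply the missing normal-form argument in full, which would be substantially longer).
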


\begin{proof}
Suppose that $\pi$ is not faithful.
Take $s \in S \setminus E(S)$ such that $\pi_s = {\rm id}_{D_{s^*}}$.
Take a non-zero $a \in D_s$ and put 
$x = \overline{ a \delta_s } - \overline{ a \delta_{ss^*} } \in ( A \rtimes_{\pi} S ) \setminus A$.
We wish to show that $x \in C_{A \rtimes_{\pi} S}( Z(A) )$.
To this end, take $b = \sum_{i=1}^n b_i \in Z(A)$,
for some $b_i \in D_{e_i}$, $e_i \in E(S)$, for $i=1,\ldots,n$. 
From Remark \ref{subring} we know that $\sum_{i=1}^n  \overline{ b_i \delta_{e_i} }$ 
commutes with $\overline{ a \delta_{ss^*} }$.
Therefore, we only need to show that $\sum_{i=1}^n  \overline{ b_i \delta_{e_i} }$ 
commutes with $\overline{ a \delta_s }$. Now
$$\sum_{i=1}^n  \overline{ b_i \delta_{e_i} } \cdot \overline{ a \delta_s } =
\sum_{i=1}^n  \overline{ b_i a \delta_{e_i s} } = [e_i s \leq s] =
\sum_{i=1}^n  \overline{ b_i a \delta_s } = 
\overline{b a \delta_s} = [b \in Z(A)] =
\overline{ab \delta_s}$$ and
$$ \sum_{i=1}^n \overline{ a \delta_s } \cdot \overline{ b_i \delta_{e_i} } =
\sum_{i=1}^n \overline{ \pi_s( \pi_{s^*}(a) b_i ) \delta_{s e_i} } =
[s e_i \leq s, \ \pi_s = {\rm id}_{D_{s^*}} ] = 
\sum_{i=1}^n \overline{ a b_i \delta_s} = \overline{ab \delta_s}.$$
\end{proof}

\begin{prop}\label{faithfulsimpleprop}
Suppose that $\pi$ and $A$ are s-unital and that for every $s \in S \setminus E(S)$
the ring $D_s$ is non-zero.
If $A \rtimes_{\pi} S$ is simple, then $\pi$ is faithful. 
\end{prop}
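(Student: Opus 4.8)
The plan is to prove the contrapositive: assuming $\pi$ is not faithful, I will produce a non-zero proper two-sided ideal of $A \rtimes_{\pi} S$, contradicting simplicity. Non-faithfulness hands us precisely the element already constructed in Proposition \ref{faithfulprop}, and the additive ``conditional expectation'' $t : A \rtimes_{\pi} S \to A$ from Remark \ref{subring} will serve as the invariant that detects properness of the ideal.

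Concretely, suppose there is $s_0 \in S \setminus E(S)$ with $\pi_{s_0} = \mathrm{id}_{D_{s_0^*}}$. By hypothesis $D_{s_0} \neq 0$, so I may pick a non-zero $a \in D_{s_0}$ and set $x = \overline{a \delta_{s_0}} - \overline{a \delta_{s_0 s_0^*}}$, exactly as in Proposition \ref{faithfulprop}; that proposition shows $x \in (A \rtimes_{\pi} S) \setminus A$, so in particular $x \neq 0$. Let $I$ be the two-sided ideal generated by $x$, which is therefore non-zero. Since $t$ is additive and surjective onto $A$ (note $t(\overline{a'\delta_e}) = a'$), and $A \neq 0$ because $D_{s_0} \neq 0$, it suffices to prove $t(I) = 0$: then $t(I) = \{0\} \neq A = t(A \rtimes_{\pi} S)$ forces $I \neq A \rtimes_{\pi} S$, whence $A \rtimes_{\pi} S$ is not simple.

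The heart of the argument is verifying that $t$ annihilates $I$, that is, $t(r x r') = 0$ for all $r, r' \in A \rtimes_{\pi} S$; by additivity it is enough to treat $r = \overline{b \delta_p}$ and $r' = \overline{c \delta_u}$. Left multiplication is harmless: the product rule gives $\overline{b\delta_p}\, x = \overline{w \delta_{p s_0}} - \overline{w \delta_{p s_0 s_0^*}}$ with the \emph{same} coefficient $w = \pi_p(\pi_{p^*}(b)\, a)$ in both summands, since that coefficient does not depend on the $\delta$-index of the right factor; hence $t(\overline{b\delta_p}\, x) = w - w = 0$ without even invoking triviality of $\pi_{s_0}$. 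For the two-sided case I then multiply $\overline{w \delta_{p s_0}} - \overline{w \delta_{p s_0 s_0^*}}$ on the right by $\overline{c\delta_u}$, compute $t$ of each summand by the product rule, and collapse the two resulting coefficients onto the common value $\pi_p(\pi_{p^*}(w)\,c)$ using $\pi_{s_0} = \mathrm{id}_{D_{s_0^*}}$ together with $\pi_{s_0 s_0^*} = \mathrm{id}_{D_{s_0 s_0^*}}$ (the latter holding for every idempotent by \cite{beuter2018}) and the composition law $\pi_s \circ \pi_t = \pi_{st}$; the upshot is that $s_0$ and $s_0 s_0^*$ act identically, so the two summands have equal $t$-image and cancel.

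The main obstacle I anticipate is the bookkeeping of domains in this final cancellation: the composition law of a partial action applies only on the prescribed intersections $D_{t^*} \cap D_{t^* s^*}$, so before replacing $\pi_{p s_0}$ by $\pi_p$ and $\pi_{(p s_0)^*}$ by $\pi_{p^*}$ (and likewise for $p s_0 s_0^*$) I must check that the relevant coefficients lie in the correct ideals, using axiom (ii) to track $w \in D_p \cap D_{p s_0}$ and its image under the various maps. This is the same flavour of computation that underlies the well-definedness of $t$ in Remark \ref{subring} and its manipulation in the proof of Proposition \ref{propsystemsimple}. Once $t(r x r') = 0$ is secured for generators, additivity yields $t(I) = 0$, and the contradiction with simplicity is complete.
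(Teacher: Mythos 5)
Your proposal is correct and follows essentially the same route as the paper: both argue by contraposition, take the same element $x = \overline{a\delta_{s_0}} - \overline{a\delta_{s_0 s_0^*}}$, form the ideal $I$ it generates, and show $t(I)=0$ by checking that left and right multiplication by homogeneous elements produces two summands with equal coefficients, whence $I$ is a non-zero proper ideal. The only (harmless) difference is presentational: you detect properness via $t(I)=0\neq A=t(A\rtimes_\pi S)$, while the paper exhibits the specific element $\overline{a\delta_{s_0}}$ with $t(\overline{a\delta_{s_0}})=a\neq 0$ lying outside $I$.
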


\begin{proof}
Suppose that $\pi$ is not faithful.
Then there is $s \in S \setminus E(S)$ such that $\pi_s = {\rm id}_{D_{s^*}}$.
Take a non-zero $a \in D_s$ and consider the non-zero element
$x = \overline{ a \delta_s } - \overline{ a \delta_{ss^*} }$ in $A \rtimes_{\pi} S$.
Let $I$ denote the non-zero ideal in $A \rtimes_{\pi} S$ generated by $x$. 
We claim that $t(I) = 0$. If we assume that the claim holds,
then it follows that $I$ is a proper ideal of $A \rtimes_{\pi} S$,
since e.g. $t( \overline{ a \delta_s } ) = a \neq 0$,
and thus $A \rtimes_{\pi} S$ is not simple.
Now we show the claim.
Take $r,t \in S$, $b \in D_r$ and $c \in D_t$. Then
$$\overline{b \delta_r} \cdot x = 
\overline{ \pi_r ( \pi_{r^*}(b) a ) \delta_{rs} } - \overline{ \pi_r ( \pi_{r^*}(b) a ) \delta_{rs s^*} }$$
and
$$x \cdot \overline{c \delta_t} = 
\overline{ a c \delta_{st} } - \overline{ ac \delta_{s s^* t} } $$
and
$$\overline{b \delta_r} \cdot x \cdot \overline{c \delta_t} =
\overline{ \pi_r ( \pi_{r^*}(b) ac ) \delta_{rst} } - \overline{ \pi_r ( \pi_{r^*}(b) ac ) \delta_{rs s^* t} } $$
from which the claim follows.
\end{proof}

\section{Steinberg algebras}\label{sectionsteinbergalgebras}

In this section, we recall from \cite{beuter2017}
the description of Steinberg algebras as skew inverse 
semigroup rings. We use this description and the previous results
to prove Theorem \ref{thirdmaintheorem} (see Theorem \ref{newthirdmaintheorem}).
At the end of this section, we specialize this result to the
case when the topology on the groupoid is discrete (se Theorem \ref{fourthmaintheorem}).
Note that we closely follow the presentation from 
\cite{beuter2017}, in particular in the proofs
of Propositions \ref{gen2} and Proposition \ref{gen1}.

Let $G$ be a {\it groupoid}. By this we mean that $G$ is a small category in which every 
morphism is an isomorphism. The objects of $G$ will
be denoted by $G_0$ and the the set of morphisms of $G$
will be denoted by $G_1$.
The {\it domain} and {\it codomain} of $g \in G_1$ will 
be denoted by $d(g)$ and $c(g)$ respectively.
Objects will be identified with the corresponding units
so that in particular, for all $g \in G_1$,
the identities $d(g) = g^{-1} g$ and $c(g) = g g^{-1}$ hold.
Let $G_2$ denote the set of {\it composable pairs} of $G_1$,
that is, all $(g,h) \in G_1 \times G_1$ such that $d(g) = c(h)$.
We say that $G$ is a {\it topological groupoid} if $G_1$ 
is a topological space making inversion and composition
continuous as maps $G_1 \rightarrow G_1$ and
$G_2 \rightarrow G_1$, respectively, where the set $G_2$
is equipped with the relative topology induced from 
the product topology on $G_1 \times G_1$.
A {\it bisection} of $G$ is a subset $U$ of $G_1$
such that both $c|_U$ and $d|_U$ are homeomorphisms. 
We call $G$ {\it \'{e}tale} if 
$G_0$ is locally compact and Hausdorff, and 
$d$ is a local homeomorphism (in that case, $c$ is also a local homeomorphism).
An \'{e}tale groupoid $G$ is said to be {\it ample} if $G_1$ has a basis 
of compact open bisections. One can show that a Hausdorff \'{e}tale groupoid
is ample if and only if $G_0$ is totally disconnected.
A subset $U$ of $G_0$ is called {\it invariant} if $d(c^{-1}(U)) = U$.
The groupoid $G$ is called {\it minimal} if $G_0$ and $\emptyset$ are the only
open invariant subsets of $G_0$. We let ${\rm Iso}(G)$ denote the 
isotropy subgroupoid of $G$, that is the set of all $g \in G_1$ with $d(g) = c(g)$.
If $G$ is Hausdorff and ample, then $G$ is said to be {\it effective} if
the interior of ${\rm Iso}(G)$ is $G_0$, or equivalently,
for all compact open bisections $U$ of $G_1 \setminus G_0$, there 
exists $a \in U$ such that $d(a) \neq c(a)$.
We let $G^a$ denote the set of compact open bisections of $G_1$.
The set $G^a$ is an inverse semigroup under the operations defined by
$UV = \{ gh \mid g \in U, \ h \in V, (g,h) \in G_2 \}$ and
$U^* = U^{-1} = \{ a^{-1} \mid a \in U \}$, for $U,V \in G^a$.
The inverse semigroup partial order in $G^a$ is the inclusion of sets.
The set of idempotents $E(G^a)$ is given by the set of $U \in G^a$
such that $U \subseteq G_0$.
From now on we assume the following:
\begin{itemize}
\item $K$ is a non-zero associative (but not necessarily commutative or unital) ring
and $G$ is a Hausdorff and ample groupoid. 
\end{itemize}
The {\it Steinberg algebra}
$A_K(G)$ is defined to be the set of compactly supported locally 
constant functions from $G_1$ to $K$ with pointwise addition,
and convolution product, that is, if $f,g \in A_K(G)$
and $a \in G_1$, then $(f * g)(a) = \sum_{bc=a} f(b)g(c)$.
In \cite{steinberg2010} it is shown that $A_K(G)$ is 
associative in the case when $K$ is a commutative unital ring.
It is clear that the associativity $A_K(G)$ also holds 
for general associative rings $K$.
If $k \in K$ and $U \in G^a$, then we let $k_U : G_1 \rightarrow K$
denote the function defined by $k_U(a) = k$, if $a \in U$,
and $k_U(a)=0$, otherwise.
The algebra $A_K(G)$ can be realised as the additive span of
functions of the form $k_U$, for $U \in G^a$.
Convolution of such functions is nicely behaved
in the sense that $k_U * l_V = (kl)_{UV}$,
for $k,l \in K$ and $U,V \in G^a$.
The product of two subsets $U$ and $V$ of $G_1$ is defined 
as $UV = \{ gh \mid g \in U, \ h \in V, (g,h) \in G_2 \}$.

Now we will describe the translation of Steinberg algebras
to skew inverse semigroup rings from \cite{beuter2017}.
From now on, let $G$ be a Hausdorff and ample groupoid.
Given $U \in G^a$, define a map
$\theta_U : d(U) \rightarrow c(U)$ by
$\theta_U(u) = c_U ( d_U^{-1}(u) )$, for $u \in U$.
Here $c_U$ and $d_U$ denote the corresponding restrictions of the
the maps $c$ and $d$.
The correspondence $U \mapsto \theta_U$ gives a partial action
of $G^a$ on $G_0$.
Let $L_c( G^0 )$ denote the ring of all locally constant,
compactly supported, $K$-valued functions on $G_0$.
Given $U \in G^a$, let $D_U$ denote the ideal 
$\{ f \in L_c(G_0) \mid \mbox{if $x \in G_0 \setminus c(U)$, then $f(x)=0$} \} = L_c(c(U))$
of $L_c(G_0)$ and define a ring isomorphism
$\pi_U : D_{U^*} \rightarrow D_U$ in the following way.
If $f \in D_{U^*}$ and $x \in c(U)$, then let 
$\pi_U(f)(x) = f ( \theta_{U^*}(x) )$, if $x \in c(U)$,
and $\pi_U(f)(x) = 0$, otherwise.
Define the map $\alpha : L_c(G_0) \rtimes_{\pi} G^a \rightarrow A_K(G)$ by
$\alpha( \overline{f \delta_B} )(x) = f(c(x))$, if $x \in B$,
and $\alpha( \overline{f \delta_B} )(x) = 0$, otherwise.
Define the map $\beta : A_K(G) \rightarrow  L_c(G_0) \rtimes_{\pi} G^a$
in the following way. Let $f = \sum_{j=1}^n (k_j)_{B_j} \in A_K(G)$,
where the $B_j$ are pairwise disjoint compact bisections of $G$.
Then let $\beta(f) = \sum_{j=1}^n \overline{ (k_j)_{c(B_j)} \delta_{B_j} }$.
From \cite[Theorem 5.2]{beuter2018} it follows that
$\alpha \circ \beta = {\rm id}_{A_K(G)}$ and
$\beta \circ \alpha = {\rm id}_{ L_c(G_0) \rtimes_{\pi} G^a }$.

\begin{defi}
Let $T$ denote $(L_c(G_0) \rtimes G^a)_0$, that is the set of all 
finite sums of elements in $L_c(G_0) \rtimes G^a$ of the form 
$\overline{g \delta_U}$, for $U \in E(G^a)$ and $g \in L_c(U)$.
\end{defi}

Now we will describe some topological properties of $G$ in terms of
algebraical properties of $L_c(G_0) \rtimes_{\pi} G^a$.
We first consider effectiveness.

\begin{prop}\label{effectiveprop}
The groupoid $G$ is effective if and only if $\pi$ is faithful.
\end{prop}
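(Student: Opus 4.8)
The plan is to unwind both conditions into parallel statements about bisections and about the maps $\pi_U$, and then match them. Recall that $G$ is effective (for Hausdorff ample $G$) precisely when for every compact open bisection $U \subseteq G_1 \setminus G_0$ there exists $a \in U$ with $d(a) \neq c(a)$; equivalently, the interior of $\mathrm{Iso}(G)$ equals $G_0$. On the other side, $\pi$ is faithful means that for all $U \in G^a \setminus E(G^a)$ we have $\pi_U \neq \mathrm{id}_{D_{U^*}}$. So the key is to relate the order-theoretic/topological condition $U \notin E(G^a)$ (i.e.\ $U \not\subseteq G_0$) to the algebraic condition $\pi_U = \mathrm{id}_{D_{U^*}}$, via the structure of $\theta_U$.

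First I would record the crucial translation: for $U \in G^a$, the isomorphism $\pi_U : D_{U^*} \to D_U$ is, by construction, pullback along $\theta_{U^*}: c(U^*) = d(U) \to d(U^*) = c(U)$, so that $\pi_U = \mathrm{id}_{D_{U^*}}$ forces $D_{U^*} = D_U$ (hence $d(U) = c(U)$) and forces $\theta_U$ to act as the identity on its domain. I would argue that, since $L_c(G_0)$ separates points of the totally disconnected space $G_0$, the equality $\pi_U = \mathrm{id}$ holds \emph{if and only if} $\theta_U(x) = x$ for every $x \in d(U) = c(U)$. The point is that $\pi_U(f) = f \circ \theta_{U^*}$ agrees with $f$ for all compactly supported locally constant $f$ exactly when $\theta_{U^*}$ is the identity, because locally constant functions detect any nontrivial point-movement on a totally disconnected space.

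Next I would connect $\theta_U = \mathrm{id}$ to the bisection lying inside the isotropy. For $a \in U$, the pair $(d(a), c(a))$ satisfies $\theta_U(d(a)) = c(a)$; thus $\theta_U$ fixes every point of $d(U)$ iff $d(a) = c(a)$ for all $a \in U$, i.e.\ iff $U \subseteq \mathrm{Iso}(G)$. Combining this with the previous step: $\pi_U = \mathrm{id}_{D_{U^*}}$ iff $U \subseteq \mathrm{Iso}(G)$ (which in particular already gives $d(U) = c(U)$). Now I can run the equivalence. If $G$ is \emph{not} effective, the interior of $\mathrm{Iso}(G)$ strictly contains no new units only vacuously — more precisely there is a compact open bisection $U \subseteq \mathrm{Iso}(G)$ with $U \not\subseteq G_0$; such a $U$ is not idempotent yet satisfies $\pi_U = \mathrm{id}$, so $\pi$ is not faithful. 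Conversely, if $\pi$ is not faithful, pick $U \in G^a \setminus E(G^a)$ with $\pi_U = \mathrm{id}$; then $U \subseteq \mathrm{Iso}(G)$ is a compact open bisection not contained in $G_0$, so the interior of $\mathrm{Iso}(G)$ properly contains $G_0$ and $G$ fails to be effective. This gives the contrapositive equivalence in both directions.

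The main obstacle I anticipate is the careful handling of the ``separation'' step, namely proving rigorously that $f \circ \theta_{U^*} = f$ for \emph{all} $f \in D_{U^*}$ implies $\theta_{U^*}$ is the identity map. One must use both that $G_0$ is totally disconnected and locally compact Hausdorff (so that around any point $x$ with $\theta_{U^*}(x) \neq x$ one can find a compact open neighbourhood separating the two points) and that such a separating set yields a locally constant compactly supported indicator function in $D_{U^*}$ witnessing $f \circ \theta_{U^*} \neq f$. A secondary subtlety is bookkeeping the identifications $d(U) = c(U^*)$ and the direction of $\theta_{U^*}$ versus $\theta_U$; I would fix these conventions explicitly at the outset to avoid sign-of-variance errors, following the setup of \cite{beuter2017}.
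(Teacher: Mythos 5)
Your proposal is correct and follows essentially the same route as the paper: both directions reduce to the equivalence between $\pi_U = \mathrm{id}_{D_{U^*}}$ and $U \subseteq \mathrm{Iso}(G)$, with one implication read off directly from $\theta_U$ and the other established by testing $\pi_U$ against indicator functions. If anything, your separation step is the more careful one: the paper tests against the characteristic function of the single point $\{d(g)\}$, which lies in $L_c(G_0)$ only when that point is open, whereas your compact open neighbourhood separating $x$ from $\theta_{U^*}(x)$ works in general.
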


\begin{proof}
Suppose that $G$ is not effective. Then there exists $U \in G^a \setminus E(G^a)$
such that for all $g \in U$, the relation $d(g) = c(g)$ holds.
Then $\theta_U = {\rm id}_{d(U)} = {\rm id}_{c(U)}$ and hence
$\pi_U = {\rm id}_{D_{U^*}}$. Thus $\pi$ is not faithful.

Now suppose that $\pi$ is not faithful. Then there exists $V \in G^a \setminus E(G^a)$
such that $\pi_V = {\rm id}_{D_{V^*}}$.
Take $g \in V^*$.
Take a non-zero $k \in K$ and define $f \in D_{V^*}$ by saying that
$f( d(g) ) = k$ and $f( a ) = 0$, for $a \in G_0 \setminus \{ d(g) \}$.
Then $k = f( d(g) ) = 
\pi_V(f)(d(g)) =
f ( \theta_{V^*} ( d(g) ) ) =
f ( c_{V^*}( d_{V^*}^{-1}( d(g) ) ) ) = 
f(c(g))$ which implies that $c(g)=d(g)$.
Thus $G$ is not effective. 
\end{proof}

\begin{prop}\label{propeffective}
If $K$ is s-unital and $C_{ L_c(G_0) \rtimes G^a }( Z(T) ) \subseteq T$,
then $G$ is effective.
\end{prop}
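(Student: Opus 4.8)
The plan is to reduce the statement to the faithfulness of $\pi$ and then invoke the centralizer criterion already established in Proposition \ref{faithfulprop}. By Proposition \ref{effectiveprop}, $G$ is effective if and only if $\pi$ is faithful, so it suffices to prove that the partial action $\pi$ of $G^a$ on $A = L_c(G_0)$ is faithful. I would obtain this by checking the three hypotheses of Proposition \ref{faithfulprop}: that $A$ and $\pi$ are s-unital, that $D_U \neq \{ 0 \}$ for every $U \in G^a \setminus E(G^a)$, and that $C_{A \rtimes_{\pi} G^a}( Z(A) ) \subseteq A$.

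For the s-unitality, I would use the structure of $L_c(G_0)$. Since $G$ is Hausdorff and ample, $G_0$ is locally compact, Hausdorff and totally disconnected, and every $f \in L_c(G_0)$, being locally constant with compact support, takes only finitely many values; thus $f = \sum_{i=1}^m k_i \chi_{U_i}$ with the $U_i$ disjoint compact open sets and $k_i \in K$. Given finitely many functions $f_1, \ldots, f_n \in L_c(G_0)$, let $k_1, \ldots, k_p \in K$ be the finitely many values they assume. By Proposition \ref{tominaga}(c), applied to the s-unital ring $K$, there is $c \in K$ with $c k_l = k_l c = k_l$ for all $l$. Setting $g = c \chi_W$, where $W$ is the (compact open) union of the supports of the $f_j$, pointwise multiplication gives $g f_j = f_j g = f_j$ for each $j$, so $A$ is s-unital by Proposition \ref{tominaga}(c). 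Applying the same argument to $c(U)$, which is a compact open subset of $G_0$, shows that each $D_U = L_c(c(U))$ is s-unital, so $\pi$ is s-unital as well.

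For the second hypothesis, if $U \in G^a \setminus E(G^a)$ then $U$ is a nonempty compact open bisection not contained in $G_0$, so $c(U)$ is a nonempty compact open subset of $G_0$; since $K \neq \{ 0 \}$, the function $k \chi_{c(U)}$ is a nonzero element of $D_U = L_c(c(U))$ for any nonzero $k \in K$, whence $D_U \neq \{ 0 \}$. Finally, the third hypothesis is exactly the assumption in force: since $A$ and $\pi$ are s-unital, Remark \ref{subring} provides a ring isomorphism $A \cong T = (L_c(G_0) \rtimes G^a)_0$, and under this identification $Z(A) = Z(T)$, so the assumed inclusion $C_{L_c(G_0) \rtimes G^a}( Z(T) ) \subseteq T$ is precisely $C_{A \rtimes_{\pi} G^a}( Z(A) ) \subseteq A$. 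With all three hypotheses verified, Proposition \ref{faithfulprop} yields that $\pi$ is faithful, and then Proposition \ref{effectiveprop} gives that $G$ is effective.

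I expect the main obstacle to be the verification of s-unitality of $A$ and of $\pi$; the rest is a direct assembly of already-established results together with the standard identification of $A$ with $(A \rtimes_{\pi} G^a)_0$. This is the only place where the hypothesis that $K$ is s-unital is genuinely used, and it hinges on the observation that a compactly supported locally constant function takes finitely many values, so that a single s-unit of $K$ supplied by Proposition \ref{tominaga}(c) can be spread over the common support to serve as a local unit.
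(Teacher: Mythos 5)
Your proposal is correct and follows exactly the paper's route: the paper's proof of this proposition is simply the one-line deduction from Proposition \ref{faithfulprop} combined with Proposition \ref{effectiveprop}. Your verification of the hypotheses of Proposition \ref{faithfulprop} (s-unitality of $L_c(G_0)$ and of each $D_U$ via the finitely-many-values argument, non-vanishing of $D_U$ for $U \notin E(G^a)$, and the identification of $T$ with $A$ via Remark \ref{subring}) is a sound filling-in of details that the paper leaves implicit.
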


\begin{proof}
This follows from Proposition \ref{faithfulprop} and Proposition \ref{effectiveprop}.
\end{proof}

\begin{defi}
We say that $Z(K)$ contains a set of
s-units for $K$ if for all $k \in K$ there exists $k' \in Z(K)$
such that $k k' = k$. 
\end{defi}

\begin{prop}\label{gen2}
If $Z(K)$ contains a set of s-units for $K$, then the following 
are equivalent:
\begin{itemize}

\item[(i)] $G$ is effective; 

\item[(ii)] $\pi$ is faithful;

\item[(iii)] $C_{ L_c(G_0) \rtimes G^a }( Z(T) ) \subseteq T$.
\end{itemize}
\end{prop}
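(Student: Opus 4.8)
My plan is to prove the cycle (iii) $\Rightarrow$ (i) $\Leftrightarrow$ (ii) $\Rightarrow$ (iii), using the propositions already established for the two directions that reduce to earlier work and reserving a direct computation in $L_c(G_0) \rtimes G^a$ for the one genuinely new implication. First I would extract the standing consequences of the hypothesis on $Z(K)$. For each $k \in K$ there is $k' \in Z(K)$ with $kk' = k$; since $k'$ is central it also satisfies $k'k = k$, so $K$ is s-unital. Consequently each $D_U = L_c(c(U))$ is s-unital (finitely many functions share a common s-unit, namely a suitable indicator function scaled by central s-units for the finitely many values of $K$ involved), whence both $\pi$ and $A = L_c(G_0)$ are s-unital. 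Finally, if $U \in G^a \setminus E(G^a)$ then $U \not\subseteq G_0$, so $U \neq \emptyset$ and $c(U)$ is a non-empty compact open subset of $G_0$; as $K \neq \{0\}$ this gives $D_U \neq \{0\}$. These facts are precisely the standing hypotheses needed to invoke Propositions \ref{faithfulprop} and \ref{propeffective}.

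Granting the above, the implication (iii) $\Rightarrow$ (i) is immediate from Proposition \ref{propeffective}, since $K$ is s-unital, and (i) $\Leftrightarrow$ (ii) is exactly Proposition \ref{effectiveprop}. (One could instead derive (iii) $\Rightarrow$ (ii) from Proposition \ref{faithfulprop}, after identifying, via the isomorphism of Remark \ref{subring}, the diagonal $A = L_c(G_0)$ with $T = (L_c(G_0) \rtimes G^a)_0$ and $Z(A)$ with $Z(T)$, so that the hypothesis $C_{L_c(G_0) \rtimes G^a}(Z(T)) \subseteq T$ becomes $C_{A \rtimes_\pi S}(Z(A)) \subseteq A$.) It thus remains to prove (i) $\Rightarrow$ (iii), and this is the step carrying the real content, to be handled by a direct computation in the spirit of \cite{beuter2017}.

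For (i) $\Rightarrow$ (iii), assume $G$ effective and take $y \in C_{L_c(G_0) \rtimes G^a}(Z(T))$, written in reduced form $y = \sum_j \overline{f_j \delta_{U_j}}$ with the $U_j \in G^a$ distinct and $f_j \in D_{U_j}$ non-zero; the aim is to force every $U_j$ into $E(G^a)$, for then $y \in T$. Suppose some $U_{j_0} \not\subseteq G_0$. Since $G_0$ is clopen, $U_{j_0} \setminus G_0$ is a compact open bisection of $G_1 \setminus G_0$, so effectiveness supplies $g \in U_{j_0}$ with $d(g) \neq c(g)$, and by shrinking we may take $g$ at a point where $f_{j_0}(c(g)) \neq 0$. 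Using that $G_0$ is Hausdorff, choose a compact open $W \subseteq G_0$ with $c(g) \in W$ and $d(g) \notin W$, and pick a central s-unit $k' \in Z(K)$ for $f_{j_0}(c(g))$. The element $w = \overline{k'_W \delta_W}$ lies in $Z(T)$ precisely because its values lie in $Z(K)$, hence it is central for the pointwise product to which multiplication in $T$ reduces. The decisive point is that left multiplication by $w$ restricts each $U_j$ to arrows with codomain in $W$, while right multiplication restricts to arrows with domain in $W$; since $c(g) \in W$ but $d(g) \notin W$, the arrow $g$ survives in $wy$ but not in $yw$, with coefficient $k' f_{j_0}(c(g)) = f_{j_0}(c(g)) \neq 0$. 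Hence $wy \neq yw$, contradicting $y \in C_{L_c(G_0) \rtimes G^a}(Z(T))$, and therefore $y \in T$.

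The step I expect to be the main obstacle is this last computation. The delicate part is to fix a reduced form for $y$ compatible with the defining relations $\overline{a \delta_r} = \overline{a \delta_s}$ (for $r \leq s$) of the skew inverse semigroup ring, so that the surviving bisections are genuinely separated and the contribution of $g$ to $wy$ is not cancelled by other terms or by the relations, and then to carry out the commutator computation cleanly. It is exactly here that the hypothesis that $Z(K)$ contains a set of s-units is essential: it guarantees that $Z(T)$ is rich enough, containing indicator functions scaled by central s-units, to separate $c(g)$ from $d(g)$ and thereby detect any bisection outside $G_0$ in the support of $y$; a merely s-unital but non-commutative $K$ would not provide such central test elements.
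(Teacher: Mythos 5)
Your overall architecture coincides with the paper's: (i)$\Leftrightarrow$(ii) is quoted from Proposition \ref{effectiveprop}, (iii)$\Rightarrow$(i) from Proposition \ref{propeffective} (your preliminary observation that centrality of the s-units forces $K$, hence $\pi$ and $L_c(G_0)$, to be s-unital is exactly what makes those propositions applicable), and (i)$\Rightarrow$(iii) is proved by commuting the element against a test element $\overline{\epsilon_W\delta_W}\in Z(T)$ supported on a compact open $W\subseteq G_0$ separating $c(g)$ from $d(g)$, with $\epsilon$ a central s-unit for the relevant coefficients. The one issue you flag as unresolved --- fixing a reduced form so that the surviving contribution of $g$ is not cancelled by other terms or by the relations $\overline{a\delta_r}=\overline{a\delta_s}$ --- is precisely where the paper's proof differs in detail: it writes the element as $\sum_{i=1}^n \overline{(k_i)_{c(B_i)}\delta_{B_i}}$ with the $B_i$ \emph{pairwise disjoint} compact bisections and the $k_i$ nonzero \emph{constants} (the normal form coming from the isomorphism $\beta$ with $A_K(G)$), chooses $\epsilon\in Z(K)$ with $k_i\epsilon=k_i$ for all $i$ simultaneously, and then evaluates the resulting identity of functions at the chosen arrow $b\in B_l$; disjointness ensures only the $l$-th term contributes, yielding $k_l=0$. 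Adopting that normal form (pairwise disjoint supports, locally constant coefficients refined to constants) rather than merely ``distinct $U_j$'' is what turns your sketch into a complete argument; with it, your computation of $wy-yw$ at $g$ is exactly the paper's equality $(k_l)_{C_l}(b)=(k_l)_{D_l}(b)$.
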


\begin{proof}
From Proposition \ref{effectiveprop} it follows that (i)$\Leftrightarrow$(ii).
The implication (iii)$\Rightarrow$(i) follows from Proposition \ref{propeffective}.
Now we show the implication (i)$\Rightarrow$(iii).
To this end, take a non-zero 
$f = \sum_{i=1}^n \overline{ (k_i)_{c(U_i)} \delta_{U_i} } \in L_c(G_0) \rtimes G^a$
where the $k_i \in K \setminus \{ 0 \}$ and the $B_i$ are pairwise
disjoint compact bisections of $G$.
Suppose that $f \in C_{ L_c(G_0) \rtimes G^a }( Z(T) )$.
We wish to show that $f \in T$.
Since $G$ is effective, it suffices to show that each $B_i$
is a subset of ${\rm Iso}(G)$.
Seeking a contradiction, suppose that there is 
$l \in \{ 1, \ldots , n \}$ and $b \in B_l$ such that $c(b) \neq d(b)$.
From the Hausdorff property of $G$ it follows that there is a
compact bisection $U \subseteq G_0$ with $c(b) \in U$ but $d(b) \notin U$.
Since $Z(K)$ contains a set of s-units for $K$, there exists $\epsilon \in Z(K)$
such that for all $i \in \{ 1,\ldots,n \}$, the equality $k_i \epsilon = k_i$ holds.
Since, clearly, $\overline{ \epsilon \delta_U } \in Z(T)$, it follows that
$\overline{ \epsilon \delta_U } f = f \overline{ \epsilon \delta_U }$.
By mimicking the calculations in the proof of 
\cite[Proposition 4.8]{beuter2017}, we get that
$(*) \ \sum_{i=1}^n (k_i)_{C_i} = \sum_{i=1}^n (k_i)_{D_i}$,
where $C_i = c_{B_i}^{-1} ( U \cap c(B_i) )$ and
$D_i = c_{B_i}^{-1}( c(B_i) \cap \theta_{B_i}( d(B_i) \cap U ) )$,
for $i = 1,\ldots,n$. By evaluating (*) on $b$,
we get the equality $(k_l)_{C_l}(b) = (k_l)_{D_l}(b)$.
This equality yields the contradiction $k_l = 0$.
\end{proof}

Next, we consider minimality.

\begin{prop}\label{minimalpropagain}
If $L_c(G_0)$ is $G^a$-simple, then $G$ is minimal.
\end{prop}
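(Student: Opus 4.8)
The plan is to prove the contrapositive: assuming $G$ is not minimal, I will produce a proper non-zero $G^a$-invariant ideal of $L_c(G_0)$, contradicting $G^a$-simplicity. First I would unpack what non-minimality gives: there exists an open invariant subset $W$ of $G_0$ with $W \neq \emptyset$ and $W \neq G_0$. Recall that invariance means $d(c^{-1}(W)) = W$, which is the topological shadow of the algebraic invariance condition on ideals. The natural candidate for the ideal is $J = \{ f \in L_c(G_0) \mid f(x) = 0 \text{ for all } x \notin W \} = L_c(W)$, the functions supported in $W$.

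The key steps, in order, are as follows. First I would verify that $J$ is a genuine two-sided ideal of $L_c(G_0)$; since $L_c(G_0)$ is a commutative ring of functions under pointwise (convolution on units reduces to pointwise) operations, this is the routine observation that multiplying a function supported in $W$ by any function keeps the support inside $W$. Second, I would check $J \neq \{0\}$ and $J \neq L_c(G_0)$. Non-triviality follows because $W$ is non-empty and open, and $G_0$ is totally disconnected (as $G$ is ample), so there is a non-zero locally constant compactly supported function supported in $W$; properness follows because $W \neq G_0$, so some point of $G_0$ lies outside $W$ and a function that is non-zero there cannot belong to $J$. Third, and this is the heart of the matter, I would show $J$ is $G^a$-invariant, i.e. for every $U \in G^a$ the inclusion $\pi_U(J \cap D_{U^*}) \subseteq J$ holds. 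Here I would use that $\pi_U(f)(x) = f(\theta_{U^*}(x))$ for $x \in c(U)$, together with the fact that $\theta_U$ is the homeomorphism $d(U) \to c(U)$ induced by $c$ and $d$, so that $\pi_U(f)$ is supported where $\theta_{U^*}$ lands inside $W$; the invariance condition $d(c^{-1}(W)) = W$ is exactly what guarantees that the image of $W$ under the partial homeomorphisms built from $G^a$ stays inside $W$.

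The main obstacle I anticipate is the third step: translating the single topological invariance identity $d(c^{-1}(W)) = W$ into the algebraic invariance $\pi_U(J \cap D_{U^*}) \subseteq J$ for all compact open bisections $U$ simultaneously. One must carefully track how the support of $\pi_U(f)$ relates to the support of $f$ under the map $\theta_{U^*}$, and confirm that invariance of $W$ under the full groupoid action descends to invariance under each bisection's partial homeomorphism. I expect the cleanest route is to observe that every $U \in G^a$ satisfies $c(U), d(U) \subseteq G_0$ with $\theta_U : d(U) \to c(U)$ a restriction of the groupoid structure maps, so if $x \in c(U) \cap W$ then the corresponding arrow witnesses $\theta_{U^*}(x) \in d(U)$ lying in $W$ by invariance; pushing this through shows $\operatorname{supp}(\pi_U(f)) \subseteq W$ whenever $\operatorname{supp}(f) \subseteq W$. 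Once invariance is established, $J$ is a proper non-zero $G^a$-invariant ideal, contradicting the hypothesis, which completes the contrapositive and hence the proposition.
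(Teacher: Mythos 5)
Your proposal is correct and is essentially the argument the paper relies on: the paper's proof simply defers to the second part of the proof of \cite[Proposition 5.4]{beuter2017}, which is exactly this construction of $L_c(W)$ as a non-zero proper $G^a$-invariant ideal from a non-empty proper open invariant subset $W$ of $G_0$. The one point to make explicit in your third step is that you need the implication $\theta_{U^*}(x)\in W \Rightarrow x\in W$ (not merely its converse, which is the direction you state); this does follow, since the invariance condition $d(c^{-1}(W))=W$ is symmetric under $g\mapsto g^{-1}$.
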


\begin{proof}
It is clear that the second part of the proof of \cite[Proposition 5.4]{beuter2017}
works in our generality also.
\end{proof}

\begin{lem}\label{thelemma}
Suppose that $K$ is an s-unital ring. 
If $I$ is an ideal of $L_c(G_0)$ such that for all $k \in K$ and
all $x \in G_0$ there exists a compact open $V \subseteq G_0$ such that 
$x \in V$ and the map $k_V$ belongs to $I$, then $I = L_c(G_0)$. 
\end{lem}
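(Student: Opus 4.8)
The plan is to show that $I=L_c(G_0)$ by proving that $I$ contains every basic element $k_W$, for $k\in K$ and $W\subseteq G_0$ compact open, since these additively span $L_c(G_0)$. So fix such a $k$ and $W$. For each point $x\in W$, the hypothesis supplies a compact open $V_x\subseteq G_0$ with $x\in V_x$ and $k_{V_x}\in I$. Since $G_0$ is locally compact, Hausdorff and totally disconnected (because $G$ is Hausdorff and ample, so $G_0$ is totally disconnected), we may shrink each $V_x$ to a compact open neighbourhood contained in $W$; intersecting $V_x$ with $W$ keeps $x$ inside and keeps the element in $I$, because $I$ is an ideal of $L_c(G_0)$ and restricting to a compact open subset of $G_0$ is achieved by multiplying by the appropriate indicator. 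Thus without loss of generality each $V_x\subseteq W$.

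Next I would invoke compactness of $W$: the family $\{V_x\}_{x\in W}$ is an open cover of the compact set $W$, so finitely many $V_{x_1},\ldots,V_{x_m}$ already cover $W$. Using total disconnectedness one refines these into pairwise disjoint compact open pieces whose union is exactly $W$ (a standard disjointification: replace $V_{x_j}$ by $V_{x_j}\setminus(V_{x_1}\cup\cdots\cup V_{x_{j-1}})$, which is again compact open). Each such disjoint piece $P_j$ is a compact open subset of some $V_{x_{i}}$, and since $I$ is an ideal and $k_{V_{x_i}}\in I$, multiplying by the idempotent indicator of $P_j$ shows $k_{P_j}=1_{P_j}*k_{V_{x_i}}\in I$. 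Summing over the disjoint pieces then gives $k_W=\sum_j k_{P_j}\in I$.

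The one point that genuinely uses the hypothesis that $K$ is s-unital, rather than just formal manipulation of indicators, is the step where I produce the idempotent indicator functions that cut down elements of $I$: the indicator $1_{P}$ of a compact open $P\subseteq G_0$ lives in $L_c(G_0)$ only when $K$ is unital, so in the s-unital setting I cannot literally multiply by $1_P$. Instead, for the given $k$ I choose an s-unit $u\in K$ with $uk=ku=k$, and work with $u_P$ in place of $1_P$; then $u_P*k_{V}=(uk)_{P\cap V}=k_{P\cap V}$ whenever $P\subseteq V$, which is exactly what is needed. Handling this substitution carefully, and making sure the convolution of these locally constant functions restricts supports the way set-theoretic intersection suggests, is the main technical obstacle. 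The remaining geometric steps (shrinking, covering, disjointifying) are routine consequences of $G_0$ being a totally disconnected locally compact Hausdorff space.
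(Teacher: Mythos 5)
Your proof is correct, but it takes a genuinely different route from the paper's. The paper argues by contradiction with a minimal counterexample: it picks a non-zero $k$ and a non-empty compact open $U$ with $k_U \notin I$, ``minimal'' among such $U$, applies the hypothesis at a single point $x \in U$ to get $V$ with $k'_V \in I$ (where $kk'=k$), computes $k_{U \cap V} = k_U * k'_V \in I$, and uses minimality to force $U \subseteq V$ and hence the contradiction $k_U \in I$. Your argument is direct and constructive: you cover the target set $W$ by the neighbourhoods supplied by the hypothesis, extract a finite subcover by compactness, disjointify by taking successive set differences of compact opens, cut each $k_{V_{x_i}}$ down to the pieces via the same s-unit trick $u_P * k_V = (uk)_{P\cap V} = k_{P\cap V}$, and sum over the partition. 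Both proofs hinge on the identical algebraic ingredient (replacing the unavailable indicator $1_P$ by $u_P$ for an s-unit $u$ of $k$), so the s-unitality is used in the same way; what differs is the combinatorial skeleton. Your version buys robustness: the paper's selection of a ``minimal'' $U$ is the delicate step (a family of non-empty compact open sets need not contain a minimal element, and decreasing chains can have non-open intersections, so the compactness appeal there requires more care than the paper gives it), whereas your finite-subcover-plus-disjointification argument sidesteps any such selection. One cosmetic remark: you invoke total disconnectedness for the disjointification, but it is not needed there --- in a Hausdorff space a difference of compact open sets is again compact open, which is all your construction uses.
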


\begin{proof}
Seeking a contradiction, suppose that $I \subsetneq L_c(G_0)$.
Since every function in $L_c(G_0)$ is a sum of funtions
of the form $k_U$, for $k \in K$ and compact open subsets $U$ of $G_0$,
it follows that there must exist a non-zero $k \in K$ and a non-empty
compact open subset $U$ of $G_0$ with $k_U \notin I$.
Amongst all such maps, we may, from compactness, choose
one $k_U$ with $U$ minimal.
Since $K$ is s-unital there exists $k' \in K$ such that $kk' = k$.
Take $x \in U$.
From the assumptions it follows that there exists 
a compact open subset $V$ of $G_0$ such that $x \in V$ and $k'_V \in I$.
But then $k_{U \cap V} = k_U * k_V' \in I$. Since $x \in U \cap V$
it follows, in particular, that $U \cap V \neq \emptyset$. 
Thus, from minimality of $U$, it follows that $U \subseteq V$.
But then $k_U = k_{U \cap V} \in I$ which is a contradiction.
\end{proof}

\begin{prop}\label{gen1}
If $K$ is simple and s-unital, then $G$ is minimal if and only if 
$L_c(G_0)$ is $G^a$-simple.
\end{prop}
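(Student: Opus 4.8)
The plan is to prove the two implications separately. The implication ``$L_c(G_0)$ is $G^a$-simple $\Rightarrow$ $G$ is minimal'' is already Proposition~\ref{minimalpropagain} and requires no further work (indeed it needs neither simplicity nor s-unitality of $K$). So the content lies in the reverse implication: assuming $G$ is minimal, I would show that every non-zero $G^a$-invariant ideal $J$ of $L_c(G_0)$ equals $L_c(G_0)$, with Lemma~\ref{thelemma} as the final step.

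First I would extract a clean ``seed'' element of $J$. Picking a non-zero $f \in J$, there is $y \in G_0$ with $k_0 := f(y) \neq 0$, and by local constancy a compact open $W \ni y$ on which $f$ is constantly $k_0$. Since $K$ is s-unital, choosing $u \in K$ with $k_0 u = k_0$ and multiplying $f$ by $u_W$ (recall that on the unit space the convolution is pointwise, as $UV = U \cap V$ for $U,V \subseteq G_0$) produces $(k_0)_W = f * u_W \in J$.

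The heart of the argument is a transport step. I would first note that, since $G$ is \'etale, $d(c^{-1}(W))$ is open, that it contains $W$ (via units), and that it is invariant; minimality then forces $d(c^{-1}(W)) = G_0$. Hence for an arbitrary $x \in G_0$ there is $g \in G_1$ with $d(g) = x$ and $c(g) \in W$. Ampleness supplies a compact open bisection $U \in G^a$ with $g \in U$, and shrinking $U$ I may assume $c(U) \subseteq W$ while keeping $x \in d(U)$. Restricting the seed gives $(k_0)_{c(U)} \in J \cap D_U$, and applying $G^a$-invariance through $\pi_{U^*}$ yields $\pi_{U^*}((k_0)_{c(U)}) = (k_0)_{d(U)} \in J$, supported on the compact open set $V_0 := d(U) \ni x$. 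That $\pi_{U^*}$ sends $(k_0)_{c(U)}$ to $(k_0)_{d(U)}$ is immediate from $\theta_U$ being a bijection $d(U) \to c(U)$ together with $(k_0)_{c(U)}$ being constant on $c(U)$.

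Finally I would invoke simplicity of $K$. Since $K$ is s-unital and simple, the two-sided ideal generated by the non-zero $k_0$ is $K k_0 K = K$, so any $k \in K$ may be written $k = \sum_j a_j k_0 b_j$; then $k_{V_0} = \sum_j (a_j)_{V_0} * (k_0)_{V_0} * (b_j)_{V_0} \in J$ because $J$ is a two-sided ideal. As $x \in G_0$ and $k \in K$ were arbitrary, the hypothesis of Lemma~\ref{thelemma} is met, giving $J = L_c(G_0)$ and hence $G^a$-simplicity. I expect the transport step --- converting the purely topological minimality condition into the algebraic statement that a seed supported near one point can be relocated to a neighborhood of any prescribed point --- to be the main obstacle, the delicate points being the openness and invariance of $d(c^{-1}(W))$ and the bookkeeping of the partial action $\pi$ under $U \mapsto U^*$.
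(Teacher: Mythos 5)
Your proof is correct and follows essentially the same route as the paper: one direction is delegated to Proposition \ref{minimalpropagain}, and the converse combines minimality (applied to a non-empty open invariant subset of $G_0$), $G^a$-invariance to transport a normalized seed $(k_0)_W$, simplicity together with s-unitality of $K$ to write any $k$ as $\sum_j a_j k_0 b_j$, and finally Lemma \ref{thelemma}. The only difference is organizational: the paper proves invariance of the support set $\{ u \in G_0 \mid \mbox{$f(u) \neq 0$ for some $f$ in the ideal} \}$ using the ideal itself, whereas you establish the purely topological invariance of $d(c^{-1}(W))$ first and then do the algebraic transport via $\pi_{U^*}$ --- the same argument, slightly repackaged.
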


\begin{proof}
The ''if'' statement follows from Proposition \ref{minimalpropagain}.
Now we show the ''only if'' statement.
Let $I$ be a non-zero $G^a$-invariant ideal of $L_c(G_0)$.
Define $U = \{ u \in G_0 \mid \mbox{there exists $f \in I$ with $f(u) \neq 0$} \}.$
Then, clearly, $U$ is non-empty.
We claim that $U$ is invariant.
Assume, for a moment, that the claim holds.
From minimality of $G$ it follows that $U = G_0$.
Take $x \in G_0$ and $f \in I$ with $f(x) \neq 0$.
Then there is a compact open subset $V$ of $G_0$
with $x \in V$ such that $f|_V = f(x)_V$.
Take $k \in K$. Since $K$ is simple, there exists $n \in \mathbb{N}$
and $k_i,k_i' \in K$, for $i=1,\ldots,n$, such that
$k = \sum_{k=1}^n k_i f(x) k_i'$.
Then $k_V = \sum_{i=1}^n (r_i)_V * f * (r_i')_V \in I$.
From Lemma \ref{thelemma} it follows that $I = L_c(G_0)$.
Now we show the claim.
Let $x \in G_1$ be such that $d(x) \in U$.
Then there exists $g \in I$ with $g(d(x)) \neq 0$. 
Take a compact open bisection $V$ with $x \in V$.
Take $n \in \mathbb{N}$ and $k_1,\ldots,k_n \in K$
such that the image of $g$ equals $\{ k_1 ,\ldots , k_n \}$.
Since $K$ is s-unital there is $k \in K$ with $k_i k = k_i$,
for $i = 1,\ldots,n$. Then $h := g * k_{d(V)} \in I \cap L_c(d(V))$.
Since $I$ is $G^a$-invariant it follows that $\alpha_V(h) \in I$.
Finally, since 
$\alpha_V(h)( c(x) ) = 
h( \theta_{V^*}(c(x)) ) = 
h( d ( c_B^{-1} ( c(x) ) ) ) =
h( d ( x ) ) =
g(d(x)) \neq 0,$
we get that $c(x) \in U$ and thus that $U$ is invariant. 
\end{proof}

We are now ready to state and prove the main result of this section (which in 
Section \ref{introduction} was named Theorem \ref{thirdmaintheorem}).

\begin{thm}\label{newthirdmaintheorem}
Suppose that $K$ has the property that $Z(K)$ contains a set of s-units for $K$.
If $G$ is a Hausdorff and ample groupoid, then the Steinberg algebra $A_K(G)$
is simple if and only if $G$ is effective and minimal, and $K$ is simple.
\end{thm}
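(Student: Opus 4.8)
The plan is to transport everything through the ring isomorphism $A_K(G) \cong L_c(G_0) \rtimes_\pi G^a$ recorded by the maps $\alpha,\beta$, writing $R = L_c(G_0) \rtimes_\pi G^a$, $A = L_c(G_0)$ and $S = G^a$. First I would check that the hypothesis on $K$ feeds the s-unital machinery: if $Z(K)$ contains a set of s-units, then $K$ is s-unital (a central local unit for $k$ is automatically two-sided), and from this every $D_U = L_c(c(U))$, and hence $A$ and $\pi$, are s-unital --- given $f$ with finite image $\{k_1,\dots,k_m\}$, a central $k'$ with $k_i k' = k_i$ for all $i$ makes $k'_{\mathrm{supp}(f)}$ a two-sided local unit for $f$. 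In particular Remark \ref{subring} applies and identifies $A$ with $R_0 = T$ as rings, so that $Z(A) \cong Z(T)$ and the condition $C_R(Z(A)) \subseteq A$ becomes $C_R(Z(R_0)) \subseteq R_0$.

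For the ''if'' direction, assume $G$ is effective and minimal and $K$ is simple. Since $K$ is simple and s-unital, Proposition \ref{gen1} converts minimality of $G$ into $S$-simplicity of $A = L_c(G_0)$. Since $Z(K)$ contains a set of s-units, the implication (i)$\Rightarrow$(iii) of Proposition \ref{gen2} converts effectiveness of $G$ into $C_R(Z(T)) \subseteq T$, that is $C_R(Z(R_0)) \subseteq R_0$. With these two facts in hand, the first part of Theorem \ref{newsecondmaintheorem} (equivalently Theorem \ref{newmaintheorem} together with Proposition \ref{propsystemsimple}) yields that $R \cong A_K(G)$ is simple.

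For the ''only if'' direction, assume $A_K(G) \cong R$ is simple; in particular $R \neq 0$, so $G_0 \neq \emptyset$. Simplicity gives system simplicity (Remark \ref{simpleremark}), so Proposition \ref{propsystemsimple} shows $A$ is $S$-simple and then Proposition \ref{minimalpropagain} shows $G$ is minimal. For effectiveness I would note that every non-idempotent $U \in G^a$ is non-empty, whence $c(U) \neq \emptyset$ and $D_U = L_c(c(U)) \neq 0$; thus Proposition \ref{faithfulsimpleprop} applies and forces $\pi$ to be faithful, and Proposition \ref{effectiveprop} then makes $G$ effective. It remains to prove that $K$ is simple, the one conclusion not already packaged in the earlier results. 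I would argue contrapositively: given a proper non-zero two-sided ideal $J$ of $K$, the set $A_J(G)$ of functions in $A_K(G)$ taking values in $J$ is a two-sided ideal (each convolution coefficient lands in $KJ + JK \subseteq J$); it is non-zero because $k_V \in A_J(G)$ for $0 \neq k \in J$ and any non-empty compact open $V \subseteq G_0$, and proper because $k_V \notin A_J(G)$ for $k \in K \setminus J$, contradicting simplicity of $A_K(G)$.

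The hard part, and the only genuinely new step, is this last one: extracting simplicity of the \emph{coefficient ring} $K$ from simplicity of the algebra, since no earlier proposition delivers it. The remaining difficulties are bookkeeping rather than conceptual --- verifying the s-unitality claims carefully enough to invoke Propositions \ref{gen1}, \ref{gen2}, \ref{propsystemsimple} and \ref{faithfulsimpleprop}, and keeping the identification $A \cong R_0$ of Remark \ref{subring} straight so that the centralizer hypothesis $C_R(Z(R_0)) \subseteq R_0$ of Theorem \ref{newmaintheorem} is literally the output of Proposition \ref{gen2}.
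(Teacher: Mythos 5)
Your proposal is correct and follows essentially the same route as the paper: the ``if'' direction via Propositions \ref{gen1} and \ref{gen2} feeding Theorem \ref{newsecondmaintheorem}, and the ``only if'' direction via Propositions \ref{propsystemsimple}, \ref{faithfulsimpleprop}, \ref{effectiveprop} and the observation that a proper non-zero ideal $J$ of $K$ yields the proper non-zero ideal $A_J(G)$ of $A_K(G)$. The only difference is presentational (you argue directly where the paper argues contrapositively, and you spell out the s-unitality and non-degeneracy hypotheses that the paper leaves implicit), which is a welcome addition rather than a divergence.
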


\begin{proof}
The ''if'' statement follows from Theorem \ref{newsecondmaintheorem},
Proposition \ref{gen2} and Proposition \ref{gen1}.
Now we show the ''only if'' statement.
Suppose that $K$ is not simple. Then there is
a nonzero proper ideal $J$ of $K$. Then $A_J(G)$ is a non-zero 
proper ideal of $A_K(G)$ and hence $A_K(G)$ is not simple.
If $G$ is not effective, then, from Proposition \ref{gen2} it follows that
$\pi$ is not faithful. From Proposition \ref{faithfulsimpleprop}
it thus follows that $A_K(G)$ is not simple.
Finally, suppose that $G$ is not minimal.
From Proposition \ref{gen1} we get that $L_c(G_0)$ is not $G^a$-simple.
From Propostion \ref{propsystemsimple} it now follows that $A_K(G)$ is not system simple.
Thus, from Remark \ref{simpleremark}, we get that $A_K(G)$ is not simple. 
\end{proof}

In the last part of this section, we consider the case when 
the topology on $G$ is discrete.
It is easy to see \cite[Remark 3.10]{steinberg2010} 
that the corresponding Steinberg algebra
$A_K(G)$ coincides with the classical {\it groupoid ring} $K[G]$,
of $G$ over $K$, defined in the following way.
The elements of $K[G]$ are finite sums
of formal elements of the form $k g$ where $k \in K$ and $g \in G_1$.
Addition is defined point-wise i.e. from the relations $(k g) + (k' g) = (k+k')g$,
for $k,k' \in K$ and $g \in G_1$.
Multiplication is defined by the biadditive extension
of the relations defined by $(k g)(k' g') = (kk')(gg')$,
if $d(g)=c(g')$, and $(kg)(k'g') = 0$, otherwise,
for $k,k' \in K$ and $g,g' \in G_1$.
Before we can state our result, we need an example and a definition.

\begin{exa}
Suppose that $I$ is a set.
Recall that the induced matrix groupoid $\overline{I}$, 
defined by $I$, is constructed in the following way.
Let $\overline{I}_0 = I$ and $\overline{I}_1 = I \times I$.
Given $(i,j) \in I \times I$ put $d( (i,j) ) = j$ and $c( (i,j) ) = i$.
The groupoid ring $K[\overline{I}]$ is called {\it the ring of row and column finite
matrices over $K$ defined by $I$}.
Note that if $n \in \mathbb{N}$ and 
$I = \{ 1,\ldots,n \}$, then $K[\overline{I}]$ coincides with the ring 
$M_n(K)$ of square matrices of size $n$ over $K$.
\end{exa}

\begin{defi}
The groupoid $G$ is called {\it connected (thin)} if for all $u,v \in G_0$
there exists at least (at most) one $g \in G_1$ with $d(g)=u$ and $c(g)=v$. 
\end{defi}

\begin{lem}\label{lemmaconnected}
If $G$ is a discrete groupoid, then
\begin{itemize}

\item[(a)] $G$ is minimal if and only if $G$ is connected;

\item[(b)] $G$ is effective if and only if $G$ is thin;

\item[(c)] $G$ is minimal and effective if and only if
$G$ equals the matrix groupoid defined by $G_0$.

\end{itemize}
\end{lem}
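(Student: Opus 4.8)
I would prove the three equivalences in Lemma~\ref{lemmaconnected} by unwinding the topological definitions given earlier in Section~\ref{sectionsteinbergalgebras} in the special case where $G$ carries the discrete topology. The key observation is that when $G$ is discrete, every subset of $G_0$ is open, compact sets are just finite sets, and the notions of ``open invariant subset'' and ``interior of ${\rm Iso}(G)$'' simplify dramatically. I expect (c) to follow formally from (a) and (b) together with the description of the matrix groupoid $\overline{G_0}$, so the real work is in (a) and (b).

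\textbf{Part (a).} I would show that minimality (only $\emptyset$ and $G_0$ are open invariant subsets of $G_0$) is equivalent to connectedness. Since every subset of the discrete space $G_0$ is open, minimality here says: \emph{the only invariant subsets of $G_0$ are $\emptyset$ and $G_0$}. Recall $U \subseteq G_0$ is invariant when $d(c^{-1}(U)) = U$, i.e.\ $U$ is closed under following morphisms forwards and backwards. First I would check that the invariant subsets are exactly the unions of equivalence classes under the relation $u \sim v$ iff there is $g \in G_1$ with $d(g)=u$, $c(g)=v$ (this is an equivalence relation precisely because $G$ is a groupoid: reflexivity from units, symmetry from inverses, transitivity from composition). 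Then the only invariant subsets being $\emptyset$ and $G_0$ is equivalent to there being a single equivalence class, which is exactly the statement that for all $u,v \in G_0$ there exists at least one $g$ with $d(g)=u$, $c(g)=v$, i.e.\ $G$ is connected.

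\textbf{Part (b).} I would show effectiveness is equivalent to thinness. For discrete $G$, the compact open bisections of $G_1 \setminus G_0$ include every singleton $\{g\}$ with $g \in G_1 \setminus G_0$, and $G$ is effective iff for every such bisection $U$ there exists $a \in U$ with $d(a) \neq c(a)$. Applying this to $U = \{g\}$ for $g \in {\rm Iso}(G) \setminus G_0$ would force $d(g) = c(g)$ to fail, so effectiveness forbids any non-unit isotropy element; conversely, absence of non-unit isotropy makes every bisection outside $G_0$ contain only elements with $d \neq c$. Thus effectiveness is equivalent to ${\rm Iso}(G) = G_0$, i.e.\ the only morphisms $g$ with $d(g)=c(g)$ are units. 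This says no object has a non-trivial loop, but I must also rule out \emph{two} distinct parallel morphisms $u \to v$: if $g,h$ both satisfy $d=u$, $c=v$ with $g \neq h$, then $g^{-1}h \in {\rm Iso}(G) \setminus G_0$, a non-unit isotropy element. Hence ${\rm Iso}(G)=G_0$ is equivalent to: for all $u,v$ there is at most one $g$ with $d(g)=u$, $c(g)=v$, which is thinness.

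\textbf{Part (c) and the main obstacle.} Combining (a) and (b), $G$ is minimal and effective iff $G$ is both connected and thin, i.e.\ for all $u,v \in G_0$ there is exactly one morphism $u \to v$; this is precisely the defining property of the matrix groupoid $\overline{G_0}$, and I would exhibit the evident isomorphism $G \cong \overline{G_0}$ sending the unique $g \colon v \to u$ to $(u,v)$, checking it respects $d$, $c$, and composition. The main obstacle is the careful bookkeeping in part~(b): I must handle loops and parallel morphisms separately and verify that ${\rm Iso}(G) \setminus G_0$ being empty captures \emph{both}, since the definition of effective only directly mentions isotropy, whereas thinness is phrased via parallel morphisms. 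The reduction $g^{-1}h \in {\rm Iso}(G)$ is the crucial bridge and should be stated explicitly.
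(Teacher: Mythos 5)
Your proposal is correct and follows essentially the same route as the paper: all three parts are proved by unwinding the topological definitions in the discrete case, with (c) deduced formally from (a) and (b). The only cosmetic difference is that in (a) you describe the invariant subsets as unions of equivalence classes while the paper argues both directions by direct contraposition, and in (b) you spell out the bridge $g^{-1}h \in \mathrm{Iso}(G)$ between isotropy and thinness, which the paper declares ``clear''.
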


\begin{proof}
(a) Suppose that $G$ is not minimal. Then there is a nonempty
invariant $U \subsetneq G_0$ with $d(c^{-1}(U)) = U$.
Take $u \in U$ and $v \in G_0 \setminus U$.
Seeking a contradiction, suppose that there is $g \in G_1$
with $d(g)=v$ and $c(g)=u$. Then $g \in c^{-1}(U)$ so that
$v = d(g) \in d(c^{-1}(U)) = U$ which is a contradiction.
Suppose that $G$ is not connected. Take $u \in G_0$ and define
$U$ to be the non-empty set of $u' \in G_0$ such that there exists
$g \in G_1$ with $d(g)=u'$ and $c(g)=u$. Then, clearly,
$d(c^{-1}(U))=U$ and, since $G$ is not connected, $U \subsetneq G_0$.
Therefore, $G$ is not minimal.

(b) It is clear that $G$ is effective if and only if 
${\rm Iso}(G) = G_0$ and the latter is equivalent to $G$ being thin.

(c) It follows from (a) and (b) that $G$ is minimal and effective
if and only if $G$ is connected and thin. In that case, for all $u,v \in G_0$
there is precisely one $g \in G_1$ with $d(g)=u$ and $c(g)=v$.
This is equivalent to $G = \overline{G_0}$.
\end{proof}

\begin{thm}\label{fourthmaintheorem}
Suppose that $K$ is a non-zero and associative (but not necessarily commutative
or unital) ring with the property that $Z(K)$ contains a set of s-units for $K$.
If $G$ is a discrete groupoid, then the Steinberg algebra $A_K(G)$ is simple
if and only $K$ is simple and $A_K(G)$ equals the ring of row and column finite matrices over $K$
defined by the objects $G_0$ of $G$.
\end{thm}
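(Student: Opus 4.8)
The plan is to derive Theorem~\ref{fourthmaintheorem} as a direct specialization of Theorem~\ref{newthirdmaintheorem} to the discrete setting, using the combinatorial characterizations collected in Lemma~\ref{lemmaconnected}. The key observation is that a discrete groupoid is automatically Hausdorff and ample (every subset of $G_1$ is compact open and locally constant functions are just arbitrary functions with finite support), so the hypotheses of Theorem~\ref{newthirdmaintheorem} are met and $A_K(G) = K[G]$ by the remark preceding the example. Thus $A_K(G)$ is simple if and only if $G$ is effective and minimal and $K$ is simple.

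First I would invoke Theorem~\ref{newthirdmaintheorem} to replace ``$A_K(G)$ is simple'' by the conjunction ``$G$ is effective, $G$ is minimal, and $K$ is simple.'' Next I would apply Lemma~\ref{lemmaconnected}(c), which states precisely that, for a discrete groupoid, $G$ is minimal and effective if and only if $G = \overline{G_0}$, the matrix groupoid defined by its object set. Combining these two equivalences, simplicity of $A_K(G)$ is equivalent to ``$K$ is simple and $G = \overline{G_0}$.'' Finally I would translate the condition $G = \overline{G_0}$ into the stated ring-theoretic form: since $A_K(G) = K[G]$ and the groupoid ring functor respects this identification, $G = \overline{G_0}$ is equivalent to $K[G] = K[\overline{G_0}]$, and by the example $K[\overline{G_0}]$ is by definition the ring of row and column finite matrices over $K$ defined by $G_0$. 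This yields exactly the asserted biconditional.

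The only genuine subtlety, and the step I would treat most carefully, is the passage from the groupoid-level equality $G = \overline{G_0}$ to the algebra-level equality $A_K(G) = K[\overline{G_0}]$. In one direction this is immediate, since equal groupoids yield equal groupoid rings. For the converse I would note that the groupoid $G$ is recoverable from $K[G]$ together with its grading by $G_1$ (the homogeneous components indexed by morphisms, with the multiplication table recording composability), so that an equality of groupoid rings respecting this structure forces $G = \overline{G_0}$; alternatively, and more cleanly, I would simply phrase the final clause as the condition $G = \overline{G_0}$ and observe that the theorem's wording ``$A_K(G)$ equals the ring of row and column finite matrices over $K$ defined by $G_0$'' is, by the defining example, literally the statement $A_K(G) = K[\overline{G_0}]$, which holds precisely when $G = \overline{G_0}$ since both sides are $K[\,\cdot\,]$ applied to groupoids on the object set $G_0$. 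Everything else is bookkeeping: verifying that discreteness guarantees the Hausdorff and ample hypotheses, and that $A_K(G)$ coincides with $K[G]$ as recorded before the example.
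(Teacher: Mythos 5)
Your proposal is correct and follows exactly the paper's route: the paper's proof of Theorem~\ref{fourthmaintheorem} is precisely the one-line combination of Theorem~\ref{newthirdmaintheorem} with Lemma~\ref{lemmaconnected}. Your extra care about identifying the groupoid-level equality $G = \overline{G_0}$ with the ring-level statement $A_K(G) = K[\overline{G_0}]$ goes slightly beyond what the paper writes down, but it fills in bookkeeping the paper leaves implicit rather than taking a different path.
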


\begin{proof}
This follows from Theorem \ref{newthirdmaintheorem} and Lemma \ref{lemmaconnected}.
\end{proof}

\begin{rem}
In the case when $K$ is locally unital, then the ''if'' statement in Theorem \ref{fourthmaintheorem} 
follows from the fact that $K$ and $A_K(G)$ are Morita equivalent (see \cite[p. 14]{anh1987}).
\end{rem}

\section{Groupoid graded rings}\label{gradedrings}

In this section, we specialize the results in the previous sections
to groupoid (and group) graded rings.
Let $G$ be a groupoid. We assume the same notation for groupoids as in Section \ref{sectionsteinbergalgebras}.
Let $R$ denote a ring. For the rest of this section, we assume that 
$R$ is {\it graded} by $G$. Recall from \cite{lundstrom2004} that this means that 
there to each $g \in G_1$ is an additive subgroup $R_g$ of $R$ such that 
$R = \oplus_{g \in G_1} R_g$ and for all $g,h \in G_1$, the
inclusion $R_g R_h \subseteq R_{gh}$ holds, if $(g,h) \in G_2$, and
$R_g R_h = \{ 0 \}$, otherwise.
Note that if $G$ only has one object, then $G$ is a group and we recover
the usual notion of a {\it group graded ring} (see e.g. \cite{cohen1983}).
Recall that an ideal $I$ of $R$ is called {\it graded} if 
$R = \oplus_{g \in G_1} R_g \cap I$; $R$ is called {\it graded simple}
if $\{ 0 \}$ and $R$ are the only graded ideals of $R$.
The grading on $R$ is called left (right) {\it non-degenerate} if for all $g \in G_1$
and all non-zero $r \in R_g$, the relation $R_{g^{-1}} r \neq \{ 0 \}$ 
($r R_{g^{-1}} \neq \{ 0 \}$) holds.
The grading on $R$ is called {\it s-unital epsilon-strongly graded}
if for all $g \in G_1$ the $R_g R_{g^{-1}}$-$R_{g^{-1}} R_g$-bimodule $R_g$ is s-unital.
Throughout this section, let $S$ denote the inverse semigroup {\it induced} by $G$.
By this we mean that $S = G_1 \cup \{ o \}$,
where $o$ is a symbol not contained in $G_1$. 
Put $o^* = o$ and if $g \in G_1$, then put $g^* = g^{-1}$.
The corresponding binary 
operation on $S$ is defined as follows. Take $g,h \in S$.
If $(g,h) \in G_2$, then let $gh$ denote the ordinary multiplication in $G_1$.
If $(g,h) \notin G_2$, then put $gh = o$.
If we put $R_o = \{ 0 \}$, then it is clear that $R$ is graded by $S$.
The following result is immediate.

\begin{prop}\label{translation}
Using the above notations, we get:
\begin{itemize}

\item[(a)] $E(S) = G_0 \cup \{ o \}$;

\item[(b)] $R_0 = \oplus_{e \in G_0} R_e$;

\item[(c)] $Z(R_0) = \oplus_{e \in G_0} Z(R_e)$;

\item[(d)] $C_R( Z(R_0) ) = \cap_{e \in G_0} C_R( Z(R_e) )$;

\item[(e)] $R$ is idempotent coherent;

\item[(f)] $R$ is $S$-graded simple if and only $R$ is $G$-graded simple;

\item[(g)] $R$ is left (right) non-degenerate as an $S$-graded ring if and only if
$R$ is left (right) non-degenerate as a $G$-graded ring.

\end{itemize}
\end{prop}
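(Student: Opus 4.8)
The plan is to verify each of the seven assertions by unwinding the definitions and invoking the constructions established earlier in this section, particularly the identification of $S = G_1 \cup \{o\}$ with $R_o = \{0\}$ and the characterization of the product on $S$.

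For \textbf{(a)}, I would observe that an element $g \in S$ is idempotent precisely when $gg = g$; since $o \cdot o = o$, the symbol $o$ is idempotent, and for $g \in G_1$ we have $gg = g$ in $S$ if and only if $(g,g) \in G_2$ and $g^2 = g$ in $G_1$, which (using $d(g) = g^{-1}g$ and $c(g) = gg^{-1}$) holds exactly when $g \in G_0$. For \textbf{(b)}, since $R_0 = \sum_{e \in E(S)} R_e$ and $R_o = \{0\}$, part (a) gives $R_0 = \sum_{e \in G_0} R_e$, and the sum is direct because the original $G$-grading $R = \oplus_{g \in G_1} R_g$ is a direct sum. Assertion \textbf{(e)} then follows since the induced $S$-system is the image of a coherent system under the embedding $G \hookrightarrow S$; more directly, for $e \in G_0$ and $g \in G_1$, the products $R_e R_g$ and $R_g R_e$ land in the appropriate component (or in $R_o = \{0\}$), so $R_0 R_g \subseteq R_g$ and $R_g R_0 \subseteq R_g$.

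For \textbf{(c)}, I would use (b) together with the fact that distinct objects $e, e' \in G_0$ are not composable, so $R_e R_{e'} = \{0\} = R_{e'} R_e$; this means the subring $R_0 = \oplus_{e \in G_0} R_e$ decomposes as a ring direct sum of the corner rings $R_e$, whence its center splits as $Z(R_0) = \oplus_{e \in G_0} Z(R_e)$. Assertion \textbf{(d)} follows formally: an element of $R$ centralizes $Z(R_0) = \oplus_e Z(R_e)$ if and only if it centralizes each summand $Z(R_e)$, giving $C_R(Z(R_0)) = \cap_{e \in G_0} C_R(Z(R_e))$. Parts \textbf{(f)} and \textbf{(g)} are a matter of comparing the two notions of grading: a $G$-graded ideal is exactly an $S$-graded ideal whose $R_o$-component is $\{0\}$ (automatic, as $R_o = \{0\}$), so the graded ideals coincide and graded simplicity is equivalent; similarly the non-degeneracy condition over $S$ quantifies over $s \in E(S) = G_0 \cup \{o\}$, and the case $s = o$ is vacuous since $R_o = \{0\}$, so it reduces to the stated $G$-graded condition with $t$ ranging over $G_1$.

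Since the proposition is flagged as \emph{immediate}, I do not anticipate a genuine obstacle; the only point requiring slight care is \textbf{(c)}, where one must justify that the center of the ring $R_0$ respects the direct-sum decomposition. The key observation there is that the decomposition $R_0 = \oplus_{e \in G_0} R_e$ is not merely a decomposition of additive groups but a ring direct sum, because cross terms $R_e R_{e'}$ for $e \neq e'$ vanish by non-composability. Once that is in hand, everything else is a direct translation between the groupoid and its induced inverse semigroup, and I would present all seven items compactly, grouping (a)(b)(e) and (c)(d) and (f)(g) as indicated above.
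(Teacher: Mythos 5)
The paper offers no argument at all here --- it simply declares the proposition immediate --- so there is nothing to compare against except the definitions, and your unwinding of them is essentially correct: (a), (b), (e) are exactly as you say, and the key observation for (c), namely that $R_e R_{e'} = \{0\}$ for distinct $e,e' \in G_0$ because $(e,e')\notin G_2$, is precisely what makes $R_0$ a ring direct sum and hence splits the center; (d) and (f) then follow formally, with $R_o=\{0\}$ killing the extra component in every $S$-indexed sum.

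The one place you should tighten is (g). The two non-degeneracy conditions are not mere restrictions of one another: the $G$-graded version quantifies the homogeneous element over \emph{all} $g \in G_1$ and tests against the \emph{specific} component $R_{g^{-1}}$, whereas the system version asks for the \emph{existence} of some $t \in S$ with $ts \in E(S)$ and $R_t r \neq 0$ (and, as used in the proof of Proposition \ref{propintersection}, the quantifier must run over all $s \in S$, not just $s \in E(S)$ as the definition literally reads --- so your reduction ``to the case $s=o$ being vacuous'' addresses the wrong quantifier). The bridge you need is: for $s = g \in G_1$ and non-zero $r \in R_g$, any witness $t$ with $tg \in E(S)$ and $R_t r \neq 0$ must satisfy $t \neq o$ and $(t,g) \in G_2$ (otherwise $R_t r \subseteq R_t R_g = \{0\}$), and then $tg \in E(S) \cap G_1 = G_0$ forces $tg = d(g)$, i.e.\ $t = g^{-1}$ since morphisms in $G$ are invertible. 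Hence the existential condition collapses to $R_{g^{-1}} r \neq \{0\}$, which is exactly the $G$-graded condition; the converse direction is the trivial choice $t = g^{-1}$. With that sentence added, your proof is complete.
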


\begin{prop}[Lundstr\"{o}m and \"{O}inert \cite{lundstrom2012}]
If the grading on $R$ is left (right) non-degenerate, then
$R / \cap_{e \in G_0} C_R( Z(R_e) )$ has the ideal intersection property. 
\end{prop}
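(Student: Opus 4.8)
The plan is to recognize this as a direct specialization of the system-level result in Proposition \ref{propintersection}, using the translation dictionary provided by Proposition \ref{translation}. The key observation is that the $G$-graded ring $R$ becomes an $S$-graded system for the induced inverse semigroup $S = G_1 \cup \{o\}$ once we set $R_o = \{0\}$, exactly as described in the paragraph preceding Proposition \ref{translation}. So rather than reproving anything about minimal elements and centralizers from scratch, I would simply verify that all hypotheses of Proposition \ref{propintersection} hold for this $S$-grading and then unwind the conclusion.

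First I would invoke Proposition \ref{translation}(e) to record that $R$, viewed as an $S$-graded system, is idempotent coherent; this is the first hypothesis of Proposition \ref{propintersection}. Next I would use Proposition \ref{translation}(g) to transfer the non-degeneracy assumption: by hypothesis the $G$-grading on $R$ is left (right) non-degenerate, and by part (g) this is equivalent to $R$ being left (right) non-degenerate as an $S$-graded ring. Thus both hypotheses of Proposition \ref{propintersection} are met.

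Applying Proposition \ref{propintersection} then yields that the ring extension $R / C_R(Z(R_0))$ has the ideal intersection property, where $R_0 = \sum_{e \in E(S)} R_e$ is computed for the induced semigroup $S$. The final step is purely notational: by Proposition \ref{translation}(b) we have $R_0 = \oplus_{e \in G_0} R_e$, and more importantly Proposition \ref{translation}(d) gives the identification $C_R(Z(R_0)) = \cap_{e \in G_0} C_R(Z(R_e))$. Substituting this into the conclusion delivers precisely the asserted statement that $R / \cap_{e \in G_0} C_R(Z(R_e))$ has the ideal intersection property.

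I do not anticipate any genuine obstacle here, since the substantive work has already been carried out in Proposition \ref{propintersection} and the combinatorial content of passing from the groupoid $G$ to the induced inverse semigroup $S$ is packaged entirely in Proposition \ref{translation}. The only point requiring a moment's care is confirming that the degenerate grading component $R_o = \{0\}$ does not interfere with the non-degeneracy translation; but since no nonzero homogeneous element lives in degree $o$, this is automatic, and indeed it is already absorbed into the statement of Proposition \ref{translation}(g). Consequently the proof is essentially a one-line assembly of parts (d), (e), and (g) of the translation proposition together with Proposition \ref{propintersection}.
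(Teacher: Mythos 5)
Your proof is correct and follows essentially the same route as the paper: translate the $G$-grading into an $S$-grading for the induced inverse semigroup and invoke the system-level ideal intersection result together with parts (d), (e) and (g) of Proposition \ref{translation}. (The paper's own one-line proof cites Proposition \ref{minimalprop}, whose hypotheses of coherence and s-unital epsilon-strongness are not assumed here; your appeal to Proposition \ref{propintersection} is the apt one, since non-degeneracy is a hypothesis rather than a conclusion in the present statement.)
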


\begin{proof}
This follows from Proposition \ref{minimalprop} and Proposition \ref{translation}.
\end{proof}

\begin{thm}\label{oinertepsilon}
If $R$ is graded simple, the grading on $R$ is non-degenerate and
$\cap_{e \in G_0} C_R( Z(R_e) ) \subseteq R_0$,
then $R$ is simple. 
\end{thm}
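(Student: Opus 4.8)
The plan is to reduce the statement to Theorem \ref{ZR0MAX} by means of the dictionary provided by Proposition \ref{translation}. Since $R$ is graded by $G$, setting $R_o = \{ 0 \}$ makes $R$ graded by the induced inverse semigroup $S$, and in particular $R$ is a system in the sense of Section \ref{systems}. Thus the machinery of Theorem \ref{ZR0MAX} becomes available once its four hypotheses are verified for this $S$-grading.

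First I would dispose of the easy hypotheses. Idempotent coherence is free: it is exactly Proposition \ref{translation}(e). For the non-degeneracy assumption, Proposition \ref{translation}(g) asserts that the grading on $R$ is left (right) non-degenerate as a $G$-graded ring precisely when it is left (right) non-degenerate as an $S$-graded ring, so that hypothesis carries over directly. For the centralizer condition, Proposition \ref{translation}(d) gives $C_R( Z(R_0) ) = \cap_{e \in G_0} C_R( Z(R_e) )$, so the assumption $\cap_{e \in G_0} C_R( Z(R_e) ) \subseteq R_0$ is literally the assumption $C_R( Z(R_0) ) \subseteq R_0$ demanded by Theorem \ref{ZR0MAX}.

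The one point that requires a small argument is the identification of graded simplicity with system simplicity. Because the $S$-grading is a genuine direct sum decomposition $R = \oplus_{s \in S} R_s$, any system ideal $I = \sum_{s \in S} I \cap R_s$ is automatically the direct sum $\oplus_{s \in S} (I \cap R_s)$, and since $I \cap R_o = \{ 0 \}$ this coincides with the graded ideal $\oplus_{g \in G_1} (I \cap R_g)$. Hence the system ideals of $R$ are exactly its $S$-graded ideals, so $R$ is system simple if and only if it is $S$-graded simple, which by Proposition \ref{translation}(f) holds if and only if $R$ is $G$-graded simple. The hypothesis that $R$ is graded simple therefore supplies the system simplicity needed.

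With all four hypotheses of Theorem \ref{ZR0MAX} in place, I would conclude that $R$ is simple. I do not anticipate a serious obstacle: the only delicate step is the coincidence of graded and system ideals, and this hinges entirely on the directness of the $S$-grading together with the vanishing of the extra component $R_o$.
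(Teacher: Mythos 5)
Your proof is correct and takes essentially the same route as the paper, which likewise reduces the statement to the general system-simplicity criterion via Proposition \ref{translation}; in fact your reduction to Theorem \ref{ZR0MAX} is the apt one, since the paper's stated citation of Theorem \ref{newmaintheorem} (which assumes a coherent s-unital epsilon-strong system) does not literally match the hypotheses here, whereas Theorem \ref{ZR0MAX} does. Your explicit verification that system ideals coincide with $S$-graded ideals (and hence that graded simplicity gives system simplicity) supplies a detail the paper leaves implicit.
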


\begin{proof}
This follows from Theorem \ref{newmaintheorem} and Proposition \ref{translation}.
\end{proof}

\begin{cor}\label{oinertepsilonagain}
If $R$ is s-unital epsilon-strongly groupoid graded and $R_0$ is a maximally commutative
subring of $R$, then $R$ is simple if and only if $R$ is graded simple.
\end{cor}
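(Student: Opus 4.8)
The plan is to derive Corollary \ref{oinertepsilonagain} as a direct specialization of Corollary \ref{corrcomm2} to the groupoid-graded setting, using the translation machinery from Proposition \ref{translation}. The key observation is that the corollary I am proving is the exact groupoid-graded analogue of Corollary \ref{corrcomm2}, which states that a coherent s-unital epsilon-strong system with $R_0$ maximally commutative is simple if and only if it is system simple. So the entire task reduces to verifying that the groupoid grading, when viewed through the induced inverse semigroup $S = G_1 \cup \{ o \}$, satisfies the hypotheses of Corollary \ref{corrcomm2}, and that the two notions of ``simplicity of the relevant ideal lattice'' coincide.

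First I would invoke the induced inverse semigroup structure $S$ on $G_1 \cup \{ o \}$ described just before Proposition \ref{translation}, and set $R_o = \{ 0 \}$ so that $R$ becomes an $S$-graded system. By Proposition \ref{translation}(e), this system is automatically idempotent coherent; in fact, since a $G$-graded ring has $R_g R_h = \{ 0 \}$ whenever $(g,h) \notin G_2$, and coherence for the induced $S$-grading only involves comparisons under the partial order (where $es \leq s$ and $se \leq s$ for idempotents $e$), the system is coherent in the sense of Definition preceding Proposition \ref{minimalprop}. Next I would translate the hypothesis that $R$ is \emph{s-unital epsilon-strongly groupoid graded}: by definition this means each $R_g R_{g^{-1}}$-$R_{g^{-1}} R_g$-bimodule $R_g$ is s-unital, which is precisely the statement that the induced system is s-unital epsilon-strong in the sense of Definition \ref{defunitalepsilon} (the extra index $o$ contributes nothing since $R_o = \{ 0 \}$).

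With these identifications in place, Corollary \ref{corrcomm2} applies verbatim to the $S$-graded system: since $R$ is a coherent s-unital epsilon-strong system and $R_0$ is maximally commutative in $R$, it is simple if and only if it is system simple. The final step is to reconcile ``system simple'' for the induced $S$-grading with ``graded simple'' for the original $G$-grading. This is exactly Proposition \ref{translation}(f), which asserts that $R$ is $S$-graded simple if and only if $R$ is $G$-graded simple. Chaining these equivalences yields that $R$ is simple if and only if $R$ is $G$-graded simple, which is the claim.

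\textbf{The main obstacle} I anticipate is not any deep argument but rather the careful bookkeeping in matching definitions across the group/groupoid/inverse-semigroup translation, particularly confirming that ``system simple'' (summing over $I \cap R_s$ for $s \in S$) reduces to ``graded simple'' (a direct sum decomposition $R = \oplus_{g \in G_1} R_g \cap I$) in the graded case. Since $R$ is genuinely \emph{graded} (a direct sum, not merely a sum of subgroups), a system ideal $I = \sum_{s \in S} I \cap R_s$ automatically becomes a direct sum $\oplus$, and the $o$-component $I \cap R_o = \{ 0 \}$ drops out harmlessly; this is precisely the content folded into Proposition \ref{translation}(f). Once that identification is granted, the proof is a one-line citation, so I would simply write: \emph{This follows from Corollary \ref{corrcomm2} and Proposition \ref{translation}.}
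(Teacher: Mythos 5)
Your proposal is correct and follows essentially the same route as the paper: the paper's proof cites Remark \ref{simpleremark}, Proposition \ref{translation} and Theorem \ref{oinertepsilon}, which unpacks to exactly the combination of Theorem \ref{newmaintheorem} and the translation machinery that you invoke via Corollary \ref{corrcomm2}. Your extra care in checking that the induced $S$-system is coherent (the partial order on $G_1\cup\{o\}$ being essentially trivial) and that system simplicity coincides with $G$-graded simplicity is sound and matches what Proposition \ref{translation} encapsulates.
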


\begin{proof}
This follows from Remark \ref{simpleremark}, Proposition \ref{translation} and Theorem \ref{oinertepsilon}. 
\end{proof}

\begin{rem}
Corollary \ref{oinertepsilonagain} generalizes \cite[Proposition 29]{NOP2016}
from the group graded case to the groupoid graded s-unital case.
\end{rem}

We will now specialize our previous results to
partial groupoid actions on rings.

\begin{defi}
For the rest of the section, let $A$ be a ring.
Recall from \cite{bagio2012} that a {\it partial action} of $G$ on $A$ is a collection
$\alpha = ( A_g , \alpha_g )_{g \in G_1}$, where for each $g \in G_1$,
$A_g$ is an ideal of $A_{c(g)}$, $A_{c(g)}$ is an ideal of $A$, and $\alpha_g : A_{g^{-1}} \to A_g$ 
is a ring isomorphism and the following conditions hold
\begin{itemize}

\item $A = \sum_{e \in G_0} A_e$;

\item $\alpha_e = {\rm id}_{A_e}$, for $e \in G_0$;

\item $\alpha_h^{-1}( A_{g^{-1}} \cap A_h ) \subseteq A_{(gh)^{-1}}$,
for $g,h \in G_2$;

\item $\alpha_g ( \alpha_h (x) ) = \alpha_{gh}(x)$,
for $(g,h) \in G_2$ and $x \in \alpha_h^{-1}(A_{g^{-1}} \cap A_h)$.

\end{itemize} 
The partial action $\alpha$ is called {\it global} if
$\alpha_g \alpha_h = \alpha_{gh}$, for $(g,h) \in G_2$.
In that case, $A_g = A_{c(g)}$, for $g \in G_1$.
We say that $\alpha$ is {\it s-unital} if for every
$g \in G_1$, the ring $A_g$ is s-unital.
An ideal $J$ of $A$ is said to be {\it $G$-invariant} if
for all $g \in G_1$ the inclusion 
$\alpha_g ( J \cap A_{g^{-1}} ) \subseteq J$ holds.
The ring $A$ is called {\it $G$-simple} if $\{ 0 \}$ and $A$ are the 
only $G$-invariant ideals of $A$.
To each partial action $\alpha$ on $A$ one can define the associated
{\it partial skew groupoid ring} $A *_{\alpha} G$
in the following way. As an additive group $A *_{\alpha} G$
is defined as $\oplus_{g \in G_1} A_g \delta_g$,
for some formal symbols $\delta_g$.
The multiplication is defined by the relations
$(a_g \delta_g) (b_h \delta_h) = 
\alpha_g( \alpha_{g^{-1}}(a_g) b_h) \delta_{gh}$,
if $(g,h) \in G_2$, and
$(a_g \delta_g) (b_h \delta_h) = 0$, otherwise,
for $g,h \in G_1$, $a_g \in A_g$ and $b_h \in A_h$.
It is clear that $\alpha$ defines an induced a partial action $\pi$
of $S$ on $A$ and that the corresponding skew inverse semigroup
ring $A \rtimes_{\pi} S$ coincides with $A *_{\alpha} G$. 
If $\alpha$ is global, then the multiplication
simplifies to 
$(a_g \delta_g) (b_h \delta_h) = 
a_g \alpha_g( b_h) \delta_{gh}$, if $(g,h) \in G_2$,
and $A *_{\alpha} G$ is called a {\it skew groupoid ring}.
In that case, if all $A_e$, for $e \in G_0$, coincide
with a ring $B$,
and all $\alpha_g = {\rm id}_B$, for $g \in G_1$,
then $A *_{\alpha} G$ equals the groupoid ring $B[G]$. 
\end{defi}

\begin{thm}\label{oinertgen}
Suppose that $\alpha$ is an s-unital partial action of 
a groupoid $G$ on an $s$-unital ring $A$.
If $A$ is $G$-simple and $C_{A *_{\alpha} G}( Z(A) ) \subseteq A$, 
then $A *_{\alpha} G$ is simple.
If $A$ is commutative, then $A *_{\alpha} G$ is simple if and only if
$A$ is $G$-simple and $A$ is a maximal 
commutative subring of $A *_{\alpha} G$.
\end{thm}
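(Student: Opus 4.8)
The plan is to deduce this theorem from Theorem \ref{newsecondmaintheorem} by passing to the inverse semigroup $S = G_1 \cup \{ o \}$ induced by $G$. As recorded in the definition immediately preceding the statement, the partial action $\alpha$ of $G$ on $A$ gives rise to an induced partial action $\pi$ of $S$ on $A$, and the associated skew inverse semigroup ring $A \rtimes_{\pi} S$ coincides, as a ring, with the partial skew groupoid ring $A *_{\alpha} G$. It therefore suffices to verify that all the hypotheses and conclusions of Theorem \ref{newsecondmaintheorem} translate faithfully across this identification, with $G$-invariance and $G$-simplicity playing the roles of $S$-invariance and $S$-simplicity.

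First I would confirm that $\pi$ is s-unital. For $s = g \in G_1$ the corresponding ideal $D_g$ equals $A_g$, which is s-unital by hypothesis, whereas $D_o = \{ 0 \}$ is trivially s-unital; hence every $D_s$ is s-unital. Next I would show that $A$ is $G$-simple if and only if it is $S$-simple. An ideal $J$ of $A$ is $G$-invariant exactly when $\alpha_g( J \cap A_{g^{-1}} ) \subseteq J$ for all $g \in G_1$, and $S$-invariant exactly when $\pi_s( J \cap D_{s^*} ) \subseteq J$ for all $s \in S$; the only extra element is $o$, for which $\pi_o( J \cap D_o ) = \{ 0 \} \subseteq J$ holds automatically, so the two invariance conditions, and thus the two simplicity notions, coincide. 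Finally I would match the centralizer hypothesis: by the s-unital version of Remark \ref{subring}, the map $i$ identifies $A$ with $R_0 = (A *_{\alpha} G)_0$, whence $Z(A)$ is identified with $Z(R_0)$ and the condition $C_{A *_{\alpha} G}( Z(A) ) \subseteq A$ becomes precisely $C_R( Z(R_0) ) \subseteq R_0$.

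With these translations in place, the first assertion follows directly from the first part of Theorem \ref{newsecondmaintheorem}, and the commutative equivalence follows from its second part, reading $G$-simple for $S$-simple throughout. I do not expect a substantive obstacle here, since the mathematical content is carried entirely by the previously established criterion; the only point demanding care is ensuring that the adjoined idempotent $o$ contributes nothing, its component ring $D_o = \{ 0 \}$ being the zero ring, so that neither the grading-zero part $R_0$, nor the lattice of invariant ideals, nor the s-unitality hypotheses are disturbed by the passage from $G$ to $S$.
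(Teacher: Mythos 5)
Your proposal is correct and follows essentially the same route as the paper, which deduces the theorem from Theorem \ref{newsecondmaintheorem} together with the translation dictionary of Proposition \ref{translation} between the $G$-graded and $S$-graded settings (with $S = G_1 \cup \{o\}$); your explicit checks that the adjoined zero idempotent $o$ disturbs nothing are exactly the content of that translation.
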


\begin{proof}
This follows from Theorem \ref{newsecondmaintheorem} and Proposition \ref{translation}.
\end{proof}

\begin{rem}
The second part of Theorem \ref{oinertgen} generalizes \cite[Theorem 2.3]{goncalvesoinert2014}
where the corresponding result for groups $G$ was obtained. 
\end{rem}

\begin{exa}
We will now apply Theorem \ref{oinertgen} to a concrete situation
where we have a global groupoid action.
It seems to the author of the present article that this construction
was first introduced in \cite{lundstrom2005}.
Namely, let $L/K$ be a finite separable (not necessarily normal) 
field extension. Let $N$ denote a normal closure of $L/K$ and let 
Gal denote the Galois group of $N/K$. 
Furthermore, let $L_1,\ldots,L_n$ denote the different conjugate
fields of $L$ under the action of Gal. 
If $1 \leq i,j \leq n$, then let $G_{ij}$ denote the set of field
isomorphisms from $L_j$ to $L_i$. 
Let $G$ denote the groupoid defined in the following way.
Put $G_0 = \{ 1,\ldots,n \}$ and
let $G_1$ be the union of the $G_{ij}$, for $1 \leq i,j \leq n$.
If $g \in G_{ij}$, then we put $d(g) = j$ and $c(g) = i$.
Define $A$ to be $L_1 \times \cdots \times L_n$ and put
$e_i = (0,\ldots,0,1,0,\ldots,0)$, with 1 in the $i$th position,
for $i=1,\ldots,n$.
Take $g \in G_{ij}$. Put $A_g = A e_i$.
Define $\alpha_g : A e_j \to A e_i$ in the following way.
If $x \in A e_j$, then $x = (0,\ldots,0,y,0,\ldots,0)$,
for some $y \in L_j$ in the $j$th position.
Put $\alpha_g(x) = (0,\ldots,0,g(y),0,\ldots,0)$,
where $g(y)$ is in the $i$th position.
It is clear that $\alpha$ defines a global groupoid
action of $G$ on $A$.
The corresponding skew groupoid ring $A *_{\alpha} G$
will, from now on, be denoted by $L * G$.
Note that if $L/K$ is a {\it Galois} field extension,
then $G = {\rm Gal}$ and $L * G$ is the classical skew group ring.
\end{exa}

\begin{thm}\label{simplicityLK}
If $L/K$ is a finite separable field extension,
then the corresponding skew groupoid ring $L * G$ is simple.
\end{thm}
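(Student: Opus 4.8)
The plan is to apply the second (commutative) part of Theorem~\ref{oinertgen}. The ring $A = L_1 \times \cdots \times L_n$ is a finite product of fields, hence commutative and unital, and in particular $s$-unital; moreover, since $\alpha$ is global with each $A_g = Ae_{c(g)} \cong L_{c(g)}$ a field, the action $\alpha$ is $s$-unital. Thus $L * G = A *_{\alpha} G$ is simple as soon as I verify the two conditions appearing there: that $A$ is $G$-simple, and that $A$ is a maximal commutative subring of $L * G$.

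For $G$-simplicity I would make the $G$-invariant ideals explicit. Every ideal of $A$ has the form $J_T = \bigoplus_{t \in T} L_t$ for a subset $T \subseteq G_0 = \{1, \ldots, n\}$, and since $A_{g^{-1}} = L_{d(g)}$ with $\alpha_g(L_{d(g)}) = L_{c(g)}$, the invariance condition $\alpha_g(J_T \cap A_{g^{-1}}) \subseteq J_T$ says exactly that $d(g) \in T$ implies $c(g) \in T$ for every $g \in G_1$. The decisive observation is that $G$ is connected: if $L_i = \sigma_i(L)$ and $L_j = \sigma_j(L)$ with $\sigma_i, \sigma_j \in \mathrm{Gal}$, then $\sigma_i \sigma_j^{-1}$ restricts to an isomorphism $L_j \to L_i$, so $G_{ij} \neq \emptyset$ for all $i,j$. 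Hence any nonempty $T$ closed under the above implication equals all of $G_0$, and the only $G$-invariant ideals are $J_\emptyset = \{0\}$ and $J_{G_0} = A$.

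The real work lies in showing maximal commutativity, $C_{L*G}(A) = A$, and this is where I expect the only genuine obstacle. I would take an arbitrary $r = \sum_{g \in G_1} a_g \delta_g$, $a_g \in A_g = L_{c(g)}$, commuting with every generator $b \delta_k$ of the embedded copy $A = \bigoplus_{k \in G_0} L_k \delta_k$. A short computation gives $(b\delta_k)(a_g\delta_g) = b a_g \delta_g$ when $c(g) = k$ and $(a_g\delta_g)(b\delta_k) = a_g\, \alpha_g(b)\, \delta_g$ when $d(g) = k$. Comparing the coefficient of $\delta_g$ for a morphism $g \in G_{ij}$ and taking $b \in L_i$ forces $b a_g = 0$ when $i \neq j$, and $b a_g = a_g\, g(b)$ when $i = j$. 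The field structure now finishes the argument: evaluating at $b = 1_{L_i}$ makes $a_g = 0$ in the first case, while in the second a nonzero $a_g$, being invertible in $L_i$, would force $g(b) = b$ for all $b \in L_i$, i.e.\ $g = \mathrm{id}_{L_i}$, a unit. Thus $a_g = 0$ unless $g \in G_0$, so $r \in A$; the reverse inclusion is trivial by commutativity. With both conditions of Theorem~\ref{oinertgen} established, the simplicity of $L * G$ follows. The subtle point, and the crux of the whole argument, is precisely this centralizer computation, where the faithfulness of the conjugation action---a nontrivial field isomorphism cannot fix its domain pointwise---is what rules out off-diagonal and nonidentity contributions.
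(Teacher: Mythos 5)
Your proposal is correct and follows essentially the same route as the paper: both apply the commutative part of Theorem~\ref{oinertgen}, verify $G$-simplicity using that every component $L_i$ is a field and that $G$ is connected via the Galois conjugates, and establish maximal commutativity of $A$ by comparing coefficients against elements $b\delta_k$ to kill first the off-diagonal terms and then the non-identity diagonal terms (where a nonzero coefficient would force $g$ to fix $L_{c(g)}$ pointwise). Your version merely makes explicit two points the paper leaves implicit, namely the classification of the $G$-invariant ideals as $J_T$ and the nonemptiness of each $G_{ij}$.
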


\begin{proof}
We wish to use Theorem \ref{oinertgen}.
First we show that $A$ is $G$-simple.
To this end, suppose that $J$ is a non-zero ideal of $A$.
Since $J = J1 = Je_1 \oplus \cdots \oplus Je_n$
it follows that there is $i \in G_0$
with $J e_i \neq \{ 0 \}$. Since $L_i$ is a field
it is clear that $J e_i = A e_i \ni e_i$.
Take $j \in \{ 1,\ldots,n \}$ and $g \in G_{ji}$.
Since $A$ is $G$-invariant it follows that
$e_j = \alpha_g(e_i) \in J$.
Since $j$ was arbitrarily chosen from $G_0$
it follows that $1 = e_1 + \cdots + e_n \in J$.
Thherefore $J = A$.
Now we show that $A$ is maximally commutative in $L * G$.
First of all, it is clear that $A$ is commutative.
Next, let $X$ be a finite subset of $G_1$.
Suppose that $z = \sum_{g \in X} a_g \delta_g \in C_{L * G}(A)$
for some non-zero $a_g \in L_{c(g)}$.
Since $z e_i = e_i z$ for all $i \in G_0$
it follows that $d(g)=c(g)$ for all $g \in X$.
Take $h \in X$ and put $j = d(h) = c(h)$.
Seeking a contradiction, suppose that
$h$ is not equal to the identity element in
the group $G_{jj}$. Then there is 
$t \in L_i$ such that $h(t) \neq t$.
From the relation $z t \delta_i = t \delta_i z$
we now get the contradiction $h(t) = t$.
Therefore $z \in A$ and we are done.
\end{proof}

\begin{rem}
Note that the simplicity of $L * G$ in Theorem \ref{simplicityLK} 
also follows from \cite[Theorem 4]{lundstrom2005}
where a more general result was obtained, however via different
techniques (separability of ring extensions).
\end{rem}

\section*{Acknowledgement}

The author is indebted to the anonymous referee for many valuable comments on the manuscript.

\end{document}